\definecolor{refkey}{gray}{.5}   
\definecolor{labelkey}{gray}{.5} 
\definecolor{Red}{rgb}{1,0,0}
 \definecolor{dark-red}{rgb}{0.4,0.15,0.15}
 \definecolor{dark-blue}{rgb}{0.15,0.15,0.4}
   \definecolor{medium-blue}{rgb}{0,0,0.5}
\numberwithin{equation}{section}
\theoremstyle{plain}
\newaliascnt{theorem}{equation}  
\newtheorem{theorem}[theorem]{Theorem}  
\newaliascnt{thm}{equation}  
\newtheorem{thm}[thm]{Theorem}  
\newaliascnt{proposition}{equation}  
\newtheorem{proposition}[proposition]{Proposition}
\newaliascnt{prop}{equation}  
\newtheorem{prop}[prop]{Proposition}
\newaliascnt{lemma}{equation}  
\newtheorem{lemma}[lemma]{Lemma}
\newaliascnt{lem}{equation}  
\newtheorem{lem}[lem]{Lemma}
\newaliascnt{corollary}{equation}  
\newtheorem{corollary}[corollary]{Corollary}
\newaliascnt{cor}{equation}  
\newtheorem{cor}[cor]{Corollary}
\newaliascnt{claim}{equation}  
\newaliascnt{conjecture}{equation}  
 \theoremstyle{definition}
\newaliascnt{definition}{equation}  
\newtheorem{definition}[definition]{Definition}
\newaliascnt{defn}{equation}  
\newtheorem{defn}[defn]{Definition}
\newaliascnt{example}{equation}  
\newtheorem{example}[example]{Example}
\newaliascnt{exm}{equation}  
\newtheorem{exm}[exm]{Example}
\newaliascnt{remark}{equation}  
\newtheorem{remark}[remark]{Remark}
\newaliascnt{remk}{equation}  
\newtheorem{remk}[remk]{Remark}
\newtheorem*{ResP}{Resolution Property for {$X$}}
\newcommand{\aref}[1]{\autoref{#1}}
 \newtheorem{thm*}{Theorem}
\newcommand{\sB}{{\mathcal B}}
\newcommand{\sC}{{\mathcal C}}
\newcommand{\sD}{{\mathcal D}}
\newcommand{\sE}{{\mathcal E}}
\newcommand{\sF}{{\mathcal F}}
\newcommand{\sK}{{\mathcal K}}
\newcommand{\sM}{{\mathcal M}}
\newcommand{\sN}{{\mathcal N}}
\newcommand{\sO}{{\mathcal O}}
\newcommand{\sP}{{\mathcal P}}
\newcommand{\sS}{{\mathcal S}}
\newcommand{\sU}{{\mathcal U}}
\newcommand{\sV}{{\mathcal V}}
\newcommand{\sW}{{\mathcal W}}
\newcommand{\sX}{{\mathcal X}}
\newcommand{\sY}{{\mathcal Y}}
\newcommand{\sZ}{{\mathcal Z}}
\newcommand{\A}{{\mathbb A}}
\newcommand{\C}{{\mathbb C}}
\renewcommand{\P}{{\mathbb P}}
\newcommand{\Q}{{\mathbb Q}}
\newcommand{\R}{{\mathbb R}}
\newcommand{\Z}{{\mathbb Z}}
\newcommand{\Ho}{\mathrm{H}}
\newcommand{\mcal}[1]{\mathcal{#1}}
\newcommand{\iso}{\cong}
\newcommand{\surj}{\twoheadrightarrow}
\newcommand{\inj}{\hookrightarrow}
\newcommand{\Pic}{{\rm Pic}}
\newcommand{\Hom}{{\rm Hom}}
\newcommand{\Spec}{{\rm Spec \mspace{1mu}}}
\newcommand{\cof}{{\rm cof}}
\newcommand{\0}{\emptyset}
\newcommand{\sHom}{{\mathcal{H}{om}}}
\newcommand{\id}{{\operatorname{id}}}
\newcommand{\Sch}{{\operatorname{\mathbf{Sch}}}}
\newcommand{\holim}{\mathop{{\rm holim}}}
\newcommand{\<}{\langle}
\renewcommand{\>}{\rangle}
\newcommand{\Sets}{{\mathbf{Sets}}}
\newcommand{\Sm}{{\mathbf{Sm}}}
\newcommand{\Ab}{{\mathbf{Ab}}}
\newcommand{\ds}{{/\kern-3pt/}}
\newcommand{\Supp}{{\operatorname{Supp}}}
\newcommand{\colim}{\mathop{\text{colim}}}
\newcommand{\Th}{{\operatorname{\rm Th}}}
\newcommand{\ov}{\overline}
\newcommand{\wh}{\widehat}
\renewcommand{\dim}{\text{\rm dim}}
\newcommand{\tuborg}{\left\{\begin{array}{ll}}
\newcommand{\sluttuborg}{\end{array}\right.}
\newcommand{\Shv}{{\mathbf{Shv}}}
\newcommand{\cd}{\smash\cdot}
\newcommand{\twoarrow}{\mathrel{\substack{\textstyle\xrightarrow{\hspace*{ 0.8cm}}\\[-0.3ex]
                      \textstyle\xrightarrow{\hspace*{0.8cm}}}}}
\begin{document} 
\title{Motivic homotopy theory of group scheme actions}
\author[J. Heller]{Jeremiah Heller}
\email{jeremiahheller.math@gmail.com}
\address{Department of Mathematics, University of Illinois at Urbana-Champaign}
\author[A. Krishna]{Amalendu Krishna}
\email{amal@math.tifr.res.in}
\address{School of Mathematics, Tata Institute of Fundamental Research, 
 Mumbai, India}
\author[P.A. {\O}stv{\ae}r]{Paul Arne {\O}stv{\ae}r}
\email{paularne@math.uio.no}
\address{Department of Mathematics, University of Oslo, Norway}

\keywords{Equivariant and motivic homotopy theory, group schemes, equivariant vector bundles and $K$-theory}
 
\subjclass[2010]{14F42, 14L30, 55P91}

\begin{abstract}
We define an unstable equivariant motivic homotopy category for an algebraic group over a Noetherian base scheme. 
We show that equivariant algebraic $K$-theory is representable in the resulting homotopy category. 
Additionally, 
we establish homotopical purity and blow-up theorems for finite abelian groups.
\end{abstract}
\maketitle
\tableofcontents

\section{Introduction}\label{section:Intro}
There is a long and fruitful tradition of using homotopical ideas to study algebro-geometric invariants.  
In groundbreaking work \cite{MV}, Morel-Voevodsky introduced a full-fledged homotopy theory for smooth algebraic varieties. Their introduction of the motivic homotopy category has its roots in work of
Rost and Voevodsky resolving the Bloch-Kato conjectures on Milnor
$K$-theory and Galois cohomology \cite{Voevodsky:Z2,Voevodsky:Zl}.  
Since then, this framework has shown itself to be a useful setting for studying algebro-geometric cohomology theories and it has yielded many applications 
to the study of algebraic cycles, algebraic $K$-theory, and quadratic forms.

In recent years there has been a growing interest in equivariant homotopy theory, in both classical homotopy theory and in motivic homotopy theory. This owes in part to the recent  success of equivariant homotopy theory in work of Hill-Hopkins-Ravenel \cite{HHR} on the Kervaire invariant one problem.
Equivariant motivic homotopy theory for finite flat group scheme actions was first defined by Voevodsky in \cite{Deligne} in order to study motivic Eilenberg-MacLane spaces. It was then taken up by Hu-Kriz-Ormsby \cite{HKO} to study the homotopy limit problem in Hermitian $K$-theory.  
The equivariant motivic homotopy category provides a convenient setting for defining new  cohomology theories on smooth schemes equipped with the action by a group scheme $G$, as well as studying old ones.
For the group of order two, it has already been exploited to define new theories. Important examples are Real algebraic $K$-theory, Real motivic cobordism \cite{HKO},  
and a Bredon type theory of motivic cohomology \cite{HVO}.
An important classical example is provided by equivariant 
algebraic $K$-theory introduced by Thomason \cite{Thomason:GK} and shown to be representable in \aref{cor:M-Uns-Rep1} below.

To set the stage for our results, we briefly mention one of the motivating applications behind our work: Asok's program \cite{Asok} on the $\A^1$-contractibility of a certain three dimensional complex variety, the Russel cubic. This question has its roots in the
Zariski Cancellation problem\footnote{The Zariski Cancellation problem asks whether a smooth complex variety $X$ such that there is an isomorphism $X\times\A^1_{\C}\iso \A^{n+1}_{\C}$ must itself be isomorphic $\A^{n}_{\C}$.}. The Russel cubic is expected to be a counterexample to the Zariski Cancellation problem in dimension three. However, it is presently unknown whether the Russel cubic is stably isomorphic to $\A^{3}_{\C}$. 
Asok's program is an attempt to answer this question. One of the 
steps in this program requires that  equivariant algebraic $K$-theory is a \emph{fixed-point-equivalence invariant}, i.e., an equivariant map $f:X\to Y$ of smooth $G$-schemes should induce an isomorphism on equivariant algebraic $K$-theory whenever $f$ induces an $\A^1$-weak equivalence $X^{H}\to Y^{H}$ on the fixed point schemes for all subgroups $H\subseteq G$.
Classically,  equivariant topological $K$-theory is a fixed-point-equivalence invariant, which follows from the fact that it is representable in the equivariant homotopy category.  We do show that equivariant algebraic $K$-theory is representable in the equivariant motivic homotopy category but equivariant motivic equivalences are not detected by fixed points. In fact, 
Herrmann \cite{Herrmann} has shown that equivariant algebraic $K$-theory is not a 
fixed-point-equivalence invariant. Nonetheless, we show in \aref{thm:ieNrationalizedKtheory} that rational equivariant algebraic $K$-theory is a fixed-point-equivalence invariant. This suffices for Asok's program. In the sequel paper \cite{HKO}, building on this paper, the remaining steps in his program are checked. (Although it should be noted that not all steps of the program turn out to work, see loc.~cit.~ for details.)


Now we turn to describing our results. First, we extend and elaborate on the foundations of equivariant motivic homotopy theory, for a flat algebraic group scheme $G$.  
Our construction of the equivariant motivic homotopy category in \aref{section:UHC} follows the now familiar pattern. We start with the category of simplicial presheaves on smooth schemes with a $G$-action, equipped with a global model structure and then form the left Bousfield localization at suitable local equivalences, finally we  further localize to force the affine line to become contractible. The local equivalences take into account the equivariant Nisnevich topology, defined in 
\aref{section:CD-structure}  by a $cd$-structure on the category of smooth $S$-schemes equipped with a $G$-action. We show that this definition yields a topology equivalent to the one Voevodsky defines \cite{Deligne}. In particular the equivariant motivic homotopy category which we construct here agrees with the one previously constructed by Voevodsky, when $G$ is finite.

Equivariant algebraic $K$-theory, introduced by Thomason \cite{Thomason:GK} is the $K$-theory of $G$-vector bundles.
We show that it is representable in the equivariant motivic homotopy category
when the base scheme is regular and $G$ satisfies the resolution property, i.e., every coherent $G$-bundle is a quotient of a $G$-vector bundle. Reductive algebraic groups satisfy this property. See \aref{cor:M-Uns-Rep1} for a precise statement of representability.
\begin{theorem}
Let $S$ be a regular Noetherian base scheme and $G$ a flat algebraic group scheme over $S$ which satisfies the resolution property. 
Then equivariant algebraic $K$-theory for smooth $G$-schemes over $S$ is representable in the equivariant motivic homotopy category.
\end{theorem}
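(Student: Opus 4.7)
The plan is to exhibit Thomason's equivariant algebraic $K$-theory as a fibrant simplicial presheaf in the equivariant motivic model structure built in \aref{section:UHC}, and then read off representability in the usual way. Concretely, I fix a functorial model (for instance a Waldhausen or $Q$-construction applied to the exact category of $G$-equivariant vector bundles, suitably small-adjusted) to produce a pointed simplicial presheaf $\mathbf{K}^G$ on smooth $G$-schemes over $S$ whose value on $X$ is Thomason's $K$-theory space $K(X,G)$. By the standard yoga of Bousfield localization, representability of $\mathbf{K}^G$ in the pointed equivariant motivic homotopy category reduces to verifying two properties: (a) descent for the equivariant Nisnevich topology, and (b) $\A^1$-homotopy invariance on smooth $G$-schemes.

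Property (b) is essentially Thomason's homotopy property for equivariant $K$-theory: for a regular $G$-scheme $X$ such that $G$ has the resolution property, the pullback $K(X,G)\to K(X\times\A^1,G)$ is a weak equivalence. Under the hypotheses of the theorem every smooth $G$-scheme $X$ over $S$ is regular, and the resolution property for $G$ propagates to the slice $\Sm_{S}^{G}/X$ in the sense needed to identify $K(X,G)$ with the $K$-theory of $G$-equivariant perfect complexes on $X$, to which Thomason's $\A^1$-invariance applies directly.

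For property (a), I would use the cd-structure presentation of the equivariant Nisnevich topology supplied by \aref{section:CD-structure}. Voevodsky's machinery for complete, regular, bounded cd-structures reduces descent to two checks: $\mathbf{K}^G(\emptyset)\simeq\ast$, which is built into the $K$-theory construction, and the Mayer-Vietoris property for every distinguished equivariant Nisnevich square. The latter is a Mayer-Vietoris sequence for an equivariant étale neighborhood $U\to X$ trivializing $X\setminus Z$ for an equivariant closed $Z\subset X$, and it follows from Thomason's equivariant localization and excision theorems applied to the categories of $G$-equivariant perfect complexes, once the cd-structure squares are identified with the admissible covers in Thomason's setup. Combining (a) and (b) with a levelwise fibrant replacement, $\mathbf{K}^G$ becomes motivically fibrant; representability in the pointed equivariant motivic homotopy category then follows by computing $[X_+,\mathbf{K}^G]$ as $\pi_0 K(X,G)=K_0(X,G)$, and capturing higher equivariant $K$-groups by forming simplicial or $\mathbb{G}_m$-suspensions.

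The main obstacle is the descent verification. Two distinct technical issues have to be resolved simultaneously: first, one needs a strict point-set functorial model of equivariant $K$-theory that genuinely lives in simplicial presheaves on $\Sm_{S}^{G}$, so that one can speak of its value on specific squares rather than mere weak equivalence classes; second, and more substantively, the distinguished squares of the cd-structure from \aref{section:CD-structure} must be matched with the class of covers for which Thomason's equivariant excision applies, in the full generality of a flat algebraic group scheme over a regular Noetherian base. Once these technical bridges are in place, the Mayer-Vietoris property, the $\A^1$-invariance and the representability statement fall out by a routine application of the localization formalism underlying the equivariant motivic model structure.
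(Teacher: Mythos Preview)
Your proposal is correct and follows essentially the same strategy as the paper: verify equivariant Nisnevich excision and $\A^1$-invariance for $\mathbf{K}^G$, then invoke the general representability criterion (\aref{cor:DQA-Psh}). The paper's execution is slightly more streamlined than what you sketch. Rather than working with perfect complexes, it uses the resolution property once and for all to identify equivariant $K$-theory with equivariant $G$-theory via \cite[Theorem~1.8]{ThomasonDuke56}; then both ingredients come directly from Thomason's results on $G$-theory: $\A^1$-invariance is \cite[Corollary~4.2]{Thomason:GK}, and excision for a distinguished square follows from the map of localization fiber sequences \cite[Theorem~2.7]{Thomason:GK}, since the distinguished-square hypothesis that $(Y\setminus W)_{\red}\iso (X\setminus U)_{\red}$ makes the fibers agree on the nose.

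In particular, what you flag as the ``main obstacle''---matching the cd-structure squares with Thomason's admissible covers---is not an obstacle at all: the isomorphism-on-closed-complements condition is exactly what is needed to compare the two localization sequences, so no further identification of cover classes is required. The functoriality issue you raise is handled in the paper, as you anticipate, by a standard rectification of the pseudo-functor $X\mapsto \sP^G(X)$.
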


As an application, we show in \aref{subsection:ECAC} that if $G$ is a finite cyclic group over a field $k$,
then every $G$-equivariant vector bundle on an equivariantly $\A^{1}$-contractible 
smooth affine curve is the pullback of an equivariant vector bundle on $\Spec(k)$.
It is an open question whether the same holds in higher dimensions.

%
A surprising feature of equivariant motivic homotopy theory  is that equivariant motivic weak equivalences are {\sl not} detected by fixed points. To remedy this,
Herrmann \cite{Herrmann} constructed a variant of the equivariant motivic homotopy category, for finite groups over fields, 
using a different equivariant generalization of the Nisnevich topology, namely the fixed-point Nisnevich topology. The weak equivalences in the resulting homotopy category are maps $f:X\to Y$ of motivic $G$-spaces such that the induced map on fixed point loci $X^{H}\to Y^{H}$ is an $\A^1$-weak equivalences for all subgroups $H\subseteq G$.
However, as shown in loc.~cit., equivariant algebraic $K$-theory does not satisfy descent with respect to the fixed point Nisnevich topology, 
and therefore is not representable in the homotopy category which he constructs. Nonetheless, we show  in 
\aref{thm:ieNrationalizedKtheory} that these difficulties disappear if one considers instead equivariant algebraic $K$-theory with rational coefficients. It follows that rational equivariant algebraic $K$-theory is a fixed-point-equivalence invariant.
\begin{theorem}
Let $k$ be a field and $G$  a finite group.
Equivariant algebraic $K$-theory with rational coefficients 
satisfies descent in the fixed point Nisnevich topology on smooth $G$-schemes over $k$. 
\end{theorem}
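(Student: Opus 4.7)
The strategy is to reduce fixed-point Nisnevich descent for rational equivariant $K$-theory to ordinary Nisnevich descent for $K$-theory of the fixed-point loci, via a rational decomposition theorem of Thomason--Vistoli--Vezzosi type.

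First I would unwind the definition of the fixed-point Nisnevich topology of Herrmann: a distinguished square $Q$ of smooth $G$-schemes over $k$ is one for which the fixed-point square $Q^H$ is a Nisnevich distinguished square of ordinary schemes for every subgroup $H\leq G$. Descent for $K^G(-)_\Q$ is then equivalent to showing that such squares are sent to homotopy pullback squares of $\Q$-linear spectra.

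The key input is a rational decomposition of the form
\[
K^G(X)\otimes\Q \;\simeq\; \bigoplus_{[C]}\bigl(K(X^C)_\Q\otimes_\Q \rho_C\bigr)^{W_G(C)},
\]
where $[C]$ runs over $G$-conjugacy classes of cyclic subgroups, $W_G(C)=N_G(C)/C$, and $\rho_C$ is a finite-dimensional $\Q$-vector space (the ``cogenerator'' summand of the rational representation ring of $C$, with its natural $W_G(C)$-action). Over a field containing $\mu_{|G|}$ this is essentially the Vezzosi--Vistoli localization theorem; over a general field one descends from $k(\mu_{|G|})$ using Galois invariants, which is exact after rationalization. Granting the decomposition, descent is immediate: for each $[C]$ the composite $X\mapsto X^C\mapsto K(X^C)_\Q$ turns the Nisnevich square $Q^C$ into a homotopy pullback by ordinary Nisnevich descent for $K$-theory on smooth $k$-schemes; tensoring with the finite-dimensional $\rho_C$ preserves homotopy pullbacks; and taking $W_G(C)$-fixed points is exact on $\Q$-linear spectra since $|W_G(C)|$ is invertible. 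A finite direct sum of descent-preserving functors is descent-preserving.

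The main obstacle is establishing the decomposition in the stated generality (arbitrary base field, smooth $G$-schemes not assumed quasi-projective). One needs Vezzosi--Vistoli's theorem over $k(\mu_{|G|})$ together with a Galois-descent step, and must check that the identifications are natural enough to be compatible with the étale $G$-maps appearing in fixed-point Nisnevich squares, so that the displayed isomorphism is functorial in such squares rather than merely on individual schemes. A secondary subtlety is arranging the resolution property on each smooth $G$-scheme entering the decomposition; for finite $G$ over a field this can be handled by Thomason's equivariant resolution results after Nisnevich-locally reducing to the quasi-projective case.
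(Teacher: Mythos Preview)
Your approach is genuinely different from the paper's. The paper does not invoke any Vezzosi--Vistoli decomposition. Instead it compares the descent spectral sequences for $\sK^G(-)_\Q$ in the equivariant Nisnevich, fixed-point Nisnevich, and isovariant \'etale topologies. The two ingredients are: (i) for any sheaf of $\Q$-modules these three sheaf cohomologies agree, proved by identifying the isovariant \'etale site of $X$ with the \'etale site of the quotient $X/G$ and using that higher \'etale cohomology of a Henselian local scheme is torsion; and (ii) the presheaves $K^G_q(-)_\Q$ carry \emph{equivariant $K_0$-transfers}, and a transfer argument (for an isovariant finite \'etale cover $f$ one has $f_*f^*=\deg(f)$) forces their equivariant Nisnevich and isovariant \'etale sheafifications to coincide. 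Since $\sK^G$ already satisfies equivariant Nisnevich descent, the $E_2$-page comparison upgrades this to the other two topologies. This is self-contained within the paper's framework and imports no external structure theorem for $K^G(-)_\Q$.

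Your route has a gap as stated. The theorem is asserted for an arbitrary field $k$, with no hypothesis that ${\rm char}(k)\nmid |G|$. The decomposition you invoke requires tameness: the concentration step identifying the $C$-summand with $K(X^C)_\Q$ needs ${\rm char}(k)\nmid |C|$, and your ``pass to $k(\mu_{|G|})$'' is unavailable when $p\mid |G|$ since $\mu_p$ is not \'etale in characteristic $p$. Even in the tame case the obstacles you flag---naturality of the decomposition for arbitrary equivariant \'etale maps, and dropping quasi-projectivity---are real and would need nontrivial work; the paper's transfer and spectral-sequence method handles all characteristics uniformly and sidesteps these issues entirely.
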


%
The homotopy purity theorem \cite[Theorem~3.2.23]{MV} is a fundamental tool in motivic homotopy theory. In \aref{thm:Purity}, we establish the following equivariant generalization.
\begin{thm}
Let $k$ be a perfect field and $G$ be a finite abelian group whose order is prime to 
$\mathrm{char}(k)$. Suppose that $k$ contains a primitive $d$th root of unity, where $d$ is the least common multiple of the orders of elements of $G$.
Then for any closed immersion $Z\hookrightarrow X$  of smooth $G$-schemes over $k$ there is an equivariant motivic weak equivalence 
$X/{(X \setminus Z)} 
\simeq
{\rm Th}(N_{Z/X})$ of pointed motivic $G$-spaces.  
\end{thm}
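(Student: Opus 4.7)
The plan is to follow the Morel--Voevodsky deformation to the normal cone strategy, carried out $G$-equivariantly throughout, with the hypotheses on $G$ and $k$ entering only at the single step where one needs an equivariant local model for the immersion. First, form the deformation space $W := B_{Z\times\{0\}}(X\times\A^{1})\setminus \widetilde{X\times\{0\}}$, on which $G$ acts via its action on $X$ and trivially on $\A^{1}$; this construction is well-defined equivariantly since $Z$ is $G$-stable. The closed subscheme $Z\times \A^{1}\subset W$ is $G$-stable, with fibers over $t=1$ and $t=0$ recovering $Z\hookrightarrow X$ and $Z\hookrightarrow N_{Z/X}$ respectively. Writing $W^{\circ}:=W\setminus (Z\times\A^{1})$, this produces a zig--zag of pointed motivic $G$-spaces
\[
X/(X\setminus Z)\;\xleftarrow{i_{1}}\;W/W^{\circ}\;\xrightarrow{i_{0}}\;\Th(N_{Z/X}),
\]
and it suffices to prove that both $i_{0}$ and $i_{1}$ are equivariant motivic weak equivalences.

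Second, I would extract an equivariant Nisnevich excision statement from the equivariant $cd$-structure of \aref{section:CD-structure}: whenever a $G$-equivariant \'etale morphism $U\to X$ of smooth $G$-schemes restricts to an isomorphism over a $G$-stable closed subscheme $Z\subseteq X$, the induced map $U/(U\setminus (U\times_{X}Z))\to X/(X\setminus Z)$ is an equivariant motivic weak equivalence. Via excision combined with $\A^{1}$-invariance, the problem for $i_{0}$ and $i_{1}$ reduces to the case in which $Z\hookrightarrow X$ is the zero section of a $G$-equivariant vector bundle, a case in which $W$ is a standard $\A^{1}$-family of Thom constructions and both maps are visibly $\A^{1}$-homotopy equivalences.

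The core technical step, which I expect to be the main obstacle, is an equivariant local structure theorem: for every point $z\in Z$, there exists an equivariant Nisnevich neighborhood $(U,u)\to (X,z)$ together with a $G$-equivariant morphism $\pi\colon U\to N_{Z/X}|_{U\times_{X}Z}$ which is simultaneously an equivariant Nisnevich neighborhood of the zero section and restricts to the identity on $U\times_{X}Z$. The hypotheses on $G$ and $k$ enter exactly here. Since $|G|$ is invertible in $k$ and $k$ contains a primitive $d$th root of unity, the finite abelian group $G$ is diagonalizable over $k$; hence every finite-dimensional $G$-representation over $k$ splits canonically as a direct sum of one-dimensional character spaces, and every short exact sequence of $G$-representations splits $G$-equivariantly.

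To produce $\pi$, choose at $z\in Z$ a $G$-stable splitting $T_{z}X = T_{z}Z\oplus V$, decompose the conormal space $V^{\vee}$ into characters, and lift the resulting basis to $G$-equivariant eigenfunctions $f_{1},\dots,f_{r}\in \fm_{X,z}$ using the semisimplicity of $G$-representations; then spread these out to an equivariant Zariski neighborhood of the (finite) $G$-orbit of $z$. The resulting $G$-equivariant morphism to $N_{Z/X}|_{U\times_{X}Z}$ is \'etale along $Z$ and restricts to the identity there, and an equivariant Nisnevich refinement yields $\pi$ with the required properties. Combining this local identification with the equivariant excision reduction and the deformation zig--zag completes the proof.
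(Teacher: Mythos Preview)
Your proposal is essentially the same strategy as the paper's proof: deformation to the normal cone, reduction via equivariant Nisnevich excision to the case of an equivariant Nisnevich linearization, and the linearization step carried out using diagonalizability of $G$ over $k$. Two small points are worth flagging. First, the arrows in your zig--zag are reversed: the fiber inclusions $i_{0},i_{1}$ go \emph{into} $W$, so the induced maps on quotients point into $W/W^{\circ}$, giving $X/(X\setminus Z)\xrightarrow{i_{1}} W/W^{\circ}\xleftarrow{i_{0}}\Th(N_{Z/X})$ (compare the paper's $\alpha_{X,Z}$ and $\beta_{X,Z}$). Second, in your local construction you speak of a ``$G$-stable splitting $T_{z}X=T_{z}Z\oplus V$'' and ``$G$-equivariant eigenfunctions in $\fm_{X,z}$'', but $G$ does not act on $T_{z}X$ or on $\fm_{X,z}$ unless $z$ is a fixed point; only the stabilizer $S_{z}$ does. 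The paper handles this by working $S_{z}$-equivariantly on an invariant affine neighborhood and invoking the linearization lemma from \cite{HVO}, which produces an \'etale map to a pair of $G$-representations $(W_{1},W_{2})$; the passage from stabilizer-equivariant local data to a genuinely $G$-equivariant Nisnevich neighborhood of the orbit is where some care is needed, and your sketch elides this step.
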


Here $N_{Z/X}$ is the normal bundle and for an equivariant vector bundle $\sV$ over $Z$, ${\rm Th}(\sV) = \sV/(\sV \setminus Z)$ is the associated Thom space. 
We follow a strategy similar to Morel-Voevodsky's in \cite{MV} and argue that we can reduce to the case of a zero section of an equivariant vector bundle. In this case an easy deformation to the normal cone argument yields the theorem. 
 
Finally, besides the already mentioned antecedents \cite{Deligne, HKO} and alternate approach \cite{Herrmann} to our work, we mention that another alternate approach is carried out by Carlsson-Joshua \cite{CJ} originating in their work on Carlsson's conjecture relating algebraic $K$-theory of fields to representation theory.

{\sl Outline of the paper:}
We introduce the equivariant Nisnevich topology via a $cd$-structure on $G$-schemes in \aref{section:CD-structure} and  show that it is regular, complete, and bounded. In the case of a finite group, we provide alternate descriptions of the topology and identify the points explicitly. In \aref{section:Models} we recall the standard local model structures on presheaves of simplicial sets.  The motivic model structures are introduced 
in \aref{section:UHC} for unpointed and pointed presheaves. 
In \aref{section:Nis-Desc-rep} we show that equivariant algebraic $K$-theory  for smooth $G$-schemes is representable in the equivariant motivic homotopy category, when the base $S$ is regular and $G$ has the resolution property. As an application we characterize equivariant vector bundles, in the case of a cyclic group, on equivariantly contractible curves over a field. In \aref{subsection:H-Nis} we focus on descent for rational equivariant algebraic $K$-theory in the fixed point Nisnevich topology. Finally in \aref{section:EPT} we establish our equivariant homotopy purity theorem.

{\bf Notations:} 
Throughout $S$ will always be a separated Noetherian scheme of finite Krull dimension.
An $S$-scheme is a separated scheme of finite type over $S$. When the base is understood we will often refer to an $S$-scheme simply as a scheme.
We write 
$\Sch_S$ for this category and
 $\Sm_S$ for the full subcategory comprised of schemes which are smooth over $S$.

An algebraic group scheme $G\to S$ is a group object in $\Sch_{S}$. In particular, it is separated and of finite type.  
We always assume that $G\to S$ is flat, although we often impose additional assumptions as needed.
The category $\Sch^G_S$ of $G$-schemes over $S$ has as objects pairs $(X, \mu_X)$ consisting of an $S$-scheme $X$ and a left $G$-action $\mu_{X}:G\times_{S} X\to X$ over $S$ and  maps between $G$-schemes are maps of $S$-schemes which are  $G$-equivariant. 
Similarly we write $\Sm^G_S$ for the category of smooth $G$-schemes.

{\bf Remark:} 
We point out that using the same methods one may construct  a motivic homotopy theory for Deligne-Mumford stacks.
For an affine base scheme $S$ as above, 
one considers the category of Noetherian and separated Deligne-Mumford stacks of finite type over $S$ equipped with its Nisnevich topology introduced in \cite{AO}. The Nisnevich descent theorem for $K$-theory of Deligne-Mumford stacks \cite{AO} shows that $K$-theory is representable in this setting.

\section{Equivariant Nisnevich topology}
\label{section:CD-structure}
The equivariant Nisnevich topology was originally defined by Voevodsky \cite{Deligne} in order to study symmetric powers of motivic spaces. 
In this section we define the equivariant Nisnevich topology  
for a flat algebraic group scheme $G\to S$ via a $cd$-structure, which we show is regular, complete, and bounded in the sense of \cite{V:cd}.

\subsection{Equivariant Nisnevich \texorpdfstring{$cd$}{cd}-structure}
\label{subsection:Nis-N}

A {\sl distinguished equivariant Nisnevich square} is a cartesian square in $\Sch^G_S$
\begin{equation}\label{eqn:cd-square}
\xymatrix{
B \ar[r] \ar[d] & Y \ar[d]^{p} \\
A \ar@{^{(}->}[r]^{j} & X,} 
\end{equation}
where $j$ an open immersion, $p$ is \'etale, and 
$(Y\setminus B)_{\rm red} \to (X\setminus A)_{\rm red}$ is an isomorphism.
The collection of distinguished equivariant Nisnevich squares forms a $cd$-structure in the sense of \cite{V:cd}.
\begin{defn}\label{defn:dist-square}
The equivariant Nisnevich $cd$-structure on  $\Sch_{S}^{G}$  is the collection of distinguished equivariant Nisnevich squares  in  $\Sch_{S}^{G}$. 
\end{defn}

As we now show,  the equivariant Nisnevich $cd$-structure has good properties. The following is an equivariant analogue of \cite[Theorem~2.2]{V:Niscdh}. The proof is obtained by following the steps in the 
nonequivariant case with suitable modifications at various stages. 
We refer to \cite[\S~2]{V:cd} for the definition of a complete, regular, and bounded $cd$-structure. 

\begin{thm}\label{thm:Comparison}
The equivariant Nisnevich $cd$-structure on $\Sch^G_S$ is complete, regular, and bounded.
\end{thm}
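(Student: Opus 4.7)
The plan is to adapt Voevodsky's proof of \cite[Theorem~2.2]{V:Niscdh} to the equivariant setting, verifying each of the three properties separately by using the fact that the forgetful functor $\Sch^G_S \to \Sch_S$ preserves fiber products, open immersions, and \'etale morphisms, so that many nonequivariant arguments transfer verbatim once one checks that the constructed objects carry natural $G$-actions.

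For \textbf{completeness}, I would verify the criterion of \cite[Lemma~2.5]{V:cd}: given a distinguished equivariant Nisnevich square as in \eqref{eqn:cd-square}, one must produce, for every point $x \in X$, a distinguished square in which either $x$ lies in the image of $A \hookrightarrow X$ or the lift to $Y$ hits $x$ with trivial residue field extension. Since the underlying scheme-theoretic square is a Nisnevich square, Voevodsky's argument applies; the only additional point is that the open subscheme $A$ and the complement $X \setminus A$ are $G$-stable by hypothesis, so the resulting refinements are automatically $G$-equivariant.

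For \textbf{regularity}, I would check that for every distinguished equivariant Nisnevich square the induced square obtained by pullback along $p: Y \to X$ is again distinguished, and that the diagonal $B \to B \times_A B$ (equivalently, the diagonal arrow in the iterated pullback) is a clopen immersion. Open immersions and \'etale morphisms are stable under pullback in $\Sch^G_S$ because the relevant fiber products exist and inherit the natural diagonal $G$-action; the clopen condition reduces to the nonequivariant statement since $B \to A$ is \'etale and $B \times_A B$ has $B$ as a connected component via the diagonal.

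For \textbf{boundedness}, I would define a density structure $(D_d)_{d \geq 0}$ on $\Sch^G_S$ by declaring $j: U \hookrightarrow X$ to be in $D_d$ if $j$ is a $G$-equivariant open immersion whose reduced complement has Krull dimension at most $\dim X - d$ (using the underlying scheme-theoretic dimension). Noetherianity of $S$ and finiteness of Krull dimension ensure this is a density structure; the nonequivariant boundedness argument of \cite{V:Niscdh} then produces the required refinements, which are $G$-stable by construction.

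The main obstacle I anticipate is in the boundedness step: one must verify that the density structure above is well-behaved for $G$-schemes, particularly that $G$-stable open subschemes exist with sufficiently small complements, which uses the flatness of $G \to S$ to guarantee that the $G$-saturation of an open subset remains open. A secondary subtlety in the regularity step is to confirm that the reduced subscheme structure on $(Y \setminus B)$ used in the defining isomorphism is $G$-stable, which follows from flatness of the $G$-action combined with the fact that taking reduced structure commutes with flat base change.
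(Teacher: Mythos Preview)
Your overall strategy matches the paper's: reduce each property to its nonequivariant analogue in \cite{V:Niscdh} via the forgetful functor, with the only genuinely new input occurring in the boundedness step. Two points deserve correction or comparison.

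First, your completeness argument is misdirected. Completeness of a $cd$-structure is not a pointwise lifting condition of the kind you describe; the paper instead invokes \cite[Lemma~2.4]{V:cd}, whose hypothesis is simply that the class of distinguished squares is stable under pullback. Since open immersions, \'etale maps, and the reduced-complement isomorphism condition are all preserved by base change in $\Sch^G_S$, this is immediate---no point-by-point analysis is needed. Your regularity sketch, by contrast, is essentially the paper's: one checks that the diagonal square with vertical maps $\Delta_B$ and $\Delta_Y$ is again a distinguished equivariant Nisnevich square (nonequivariantly this is part of \cite[Theorem~2.2]{V:Niscdh}, and all maps involved are visibly $G$-equivariant), and then applies \cite[Lemma~2.11]{V:cd}.

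For boundedness, the paper uses the point-chain density structure of \cite[Proposition~2.10]{V:Niscdh} (restricted to $G$-invariant opens) rather than your codimension-based one, but this difference is inessential. What you correctly flag as the ``main obstacle'' is exactly the content of the paper's \aref{lem:Density}: the nonequivariant argument of \cite[Lemma~2.9]{V:Niscdh} produces some $W' \in D_i(Y)$ with $f^{-1}(W') \subseteq V$, but $W'$ need not be $G$-invariant; one then replaces $W'$ by its orbit $W = G\cdot W'$, which is open because $G\to S$ is flat, still lies in $D_i(Y)$ because $W' \subseteq W$, and still satisfies $f^{-1}(W) \subseteq V$ because $V$ is already $G$-invariant. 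With this lemma in hand, the verification that every distinguished square is reducing follows the nonequivariant argument of \cite[Proposition~2.10]{V:Niscdh} line by line, checking at each stage that the subschemes produced are $G$-invariant.
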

\begin{proof}
The completeness assertion follows from \cite[Lemma~2.4]{V:cd} since distinguished equivariant Nisnevich squares are closed under pullbacks.
  
To prove regularity,
observe that for a distinguished equivariant Nisnevich square \eqref{eqn:cd-square}  there is an induced distinguished Nisnevich square 
by \cite[Theorem~2.2]{V:Niscdh} 
\begin{equation}\label{eqn:CB*1}
\xymatrix@C.8pc{
B \ar[r]^{e'} \ar[d]_{\Delta_B} & Y \ar[d]^{\Delta_Y} \\
B \times_A B \ar[r] & Y \times_X Y.}
\end{equation}
Because the maps in~\eqref{eqn:CB*1} are $G$-equivariant it is also a distinguished equivariant Nisnevich square. 
The regularity condition now follows from \cite[Lemma~2.11]{V:cd}.

The boundedness condition is not straightforward from the non-equivariant case.
First we define a density structure on $\Sch^G_S$. 
For $X \in \Sch^G_S$ and $i \ge 0$, 
let $D_i(X)$ denote the class of equivariant open embeddings $U \to X$  that define an element of the density structure on  
\cite[Proposition~2.10]{V:cd} under the forgetful functor $\Sch^G_S \to \Sch_S$.
That is, an equivariant open embedding $U\to X$ is in $D_{i}(X)$ provided
for every $z\in X\setminus U$ there exists a sequence of points $z=x_0,x_1,\dots,x_i$ in $X$ such that for $0\leq j<i$, 
$x_j\neq x_{j+1}$ and $x_j\in\overline{\{x_{j+1}\}}$. 
One verifies easily that this defines a density structure on $\Sch^G_S$, 
and it is locally of finite dimension. 

To prove boundedness, 
it is enough to show that every distinguished equivariant Nisnevich square is reducing with respect to the above density structure.
Consider a distinguished equivariant Nisnevich square of the form~\eqref{eqn:cd-square} and suppose $B_0 \in D_{i-1}(B), A_0 \in D_i(A)$ and $Y_0 \in D_i(Y)$.
Applying \aref{lem:Density} below to the morphism $j \coprod p$ we can find $X_0 \in D_i(X)$ such that $j(A_0) \cap p(Y_0) \subseteq X_0$.
Replacing $Y$ by $Y_0$, $A$ by $A_0$, $B$ by $B'= A_0 \times_{X} Y_0$, $X$ by $X_0$, and applying \cite[Lemma~2.5]{V:Niscdh} we are 
reduced to consider the distinguished equivariant Nisnevich square    
\begin{equation}\label{eqn:cd-square-1}
\xymatrix@C.9pc{
B' \ar[r] \ar[d] & Y_0 \ar[d]^{p} \\
A_0 \ar[r]_{j} & X_0.} 
\end{equation}

We now set 
\begin{equation*}\label{eqn:cd-square-2}
B'_0 = B' \cap B_0, \ Z = B' \setminus B'_0,  \ Y' = Y_0 \setminus 
cl_{Y_0}(Z),  \ A' = A_0 \ {\rm and} \ X' = j(A_0) \cup p(Y').
\end{equation*}
In \cite[Proposition~2.10]{V:Niscdh} it is noted that 
\begin{equation*}\label{eqn:cd-square-3}
\xymatrix@C.9pc{
B'_0 \ar[r] \ar[d] & Y' \ar[d]^{p} \\
A_0 \ar[r]_{j} & X'} 
\end{equation*}   
is a distinguished Nisnevich square which satisfies the required properties.
To complete the proof we observe that the inclusions in this square are $G$-invariant.
\end{proof}

\begin{lem}\label{lem:Density}
Let $f: X \to Y$ be a morphism in $\Sch^G_S$ and assume that there exists a $G$-invariant dense open subset $U$ in $Y$ 
such that $f^{-1}(U)$ is dense and $f^{-1}(U) \to U$ has fibers of dimension zero. 
Then for any $i \ge 0$ and $V \in D_i(X)$, 
there exists $W \in D_i(Y)$ such that $f^{-1}(W) \subseteq V$.
\end{lem}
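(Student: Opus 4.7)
The plan is to construct the desired $W$ explicitly as the largest open subset of $Y$ whose preimage under $f$ is contained in $V$, and then deduce that it lies in the density structure by reducing to the non-equivariant version of this statement, which appears (at least implicitly) in Voevodsky's proof of boundedness of the non-equivariant Nisnevich $cd$-structure in \cite[Theorem~2.2]{V:Niscdh}.

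Concretely, I would set $Z := X \setminus V$ and define
\[
W := Y \setminus \overline{f(Z)}.
\]
Since $V$ is $G$-invariant and $f$ is $G$-equivariant, the closed subset $Z\subseteq X$ is $G$-stable, hence so is its image $f(Z)\subseteq Y$ and, in turn, the Zariski closure $\overline{f(Z)}$. Thus $W$ is a $G$-invariant open subscheme of $Y$ and by construction $f^{-1}(W)\subseteq V$. Moreover $W$ is the largest open subset of $Y$ with the latter property: if $W'\subseteq Y$ is any open with $f^{-1}(W')\subseteq V$, then $W'\cap f(Z)=\emptyset$, and openness of $W'$ forces $W'\cap \overline{f(Z)}=\emptyset$, i.e.\ $W'\subseteq W$.

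It remains to verify $W\in D_i(Y)$. Forgetting the $G$-action, the hypotheses on $f$ and $V$ persist, so the non-equivariant analog of the present lemma produces an open $W_0 \subseteq Y$ with $f^{-1}(W_0)\subseteq V$ and $W_0\in D_i(Y)$. By the maximality established above, $W_0\subseteq W$, so $Y\setminus W \subseteq Y\setminus W_0$. The defining condition for $D_i$ is upward-monotone: if every point of a closed set admits a length-$i$ generization chain in $Y$, then the same holds for any subset of it. Therefore every point of $Y\setminus W$ admits such a chain in $Y$, proving $W\in D_i(Y)$.

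The main obstacle is the non-equivariant version of the lemma. If not directly available in the form stated, one would reprove it by a Chevalley-plus-dimension argument: $f(Z)$ is constructible in $Y$; the density hypothesis $V\in D_i(X)$ combined with $\dim X = \dim Y$ (a consequence of quasi-finiteness of $f^{-1}(U)\to U$ and the density of both $U$ in $Y$ and $f^{-1}(U)$ in $X$) controls the codimension of $f(Z\cap f^{-1}(U))$ in $Y$, while the residual part $f(Z\setminus f^{-1}(U))$ lies inside the codimension-$\ge 1$ closed set $Y\setminus U$ and can be handled by induction on $i$. Packaging these two estimates into the codimension of the single closed set $\overline{f(Z)}$ is where the technical bulk of the proof sits.
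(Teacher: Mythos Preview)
Your argument is correct and follows essentially the same approach as the paper: invoke the non-equivariant version \cite[Lemma~2.9]{V:Niscdh} to obtain some (not necessarily invariant) $W_0\in D_i(Y)$ with $f^{-1}(W_0)\subseteq V$, then enlarge it to a $G$-invariant open and use upward monotonicity of $D_i$ together with the $G$-invariance of $V$. The only cosmetic difference is the choice of enlargement: the paper takes the orbit $W=G\cdot W_0$, while you take the maximal candidate $W=Y\setminus\overline{f(Z)}$; both contain $W_0$ and have $f^{-1}(W)\subseteq V$, so either works. Your final paragraph sketching a direct proof of the non-equivariant statement is unnecessary, since the paper simply cites \cite[Lemma~2.9]{V:Niscdh} as a black box.
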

\begin{proof}
By \cite[Lemma~2.9]{V:Niscdh}, 
there exists $W' \in D_i(Y)$ such that $f^{-1}(W') \subseteq V$. 
But $W'$ need not be $G$-invariant. 
Since $G\to S$ is flat, the orbit $G\cd W'\subseteq Y$ is open. Set $W=G\cd W'$.
Because $V \subseteq X$ is $G$-invariant (by definition of our density structure), 
it follows that $f^{-1}(W) \subseteq V$.
Furthermore, 
as $W' \subseteq W$ and $W' \in D_i(Y)$, 
we see that $W \in D_i(Y)$. 
This proves the lemma.
\end{proof}

\begin{defn}
The {\sl equivariant Nisnevich topology} is the Grothendieck topology associated to the 
equivariant Nisnevich $cd$-structure. Write $\Sch^{G}_{S/{\rm Nis}}$ and
$\Sm^G_{S/{\rm Nis}}$ for the respective categories of $G$-schemes and smooth $G$-schemes equipped with the equivariant Nisnevich topology. 
\end{defn}

\begin{remk}
For every distinguished equivariant Nisnevich square \eqref{eqn:cd-square}, 
the sieve generated by $j$ and $p$ is a covering of $X$, 
and the empty sieve covers the empty scheme.
\end{remk}

\begin{cor}\label{cor:Sheaf-Nis}
A presheaf $\sF$ is a  sheaf in the equivariant Nisnevich topology if and only if $\sF(\0)=pt$ and for any distinguished equivariant Nisnevich (\ref{eqn:cd-square}), the resulting square
$$
\xymatrix{
\sF(X) \ar[r]\ar[d] & \sF(A)\ar[d] \\
\sF(Y) \ar[r] & \sF(B)
}
$$
is cartesian. 
\end{cor}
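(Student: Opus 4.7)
The plan is to deduce this corollary as a direct application of Voevodsky's general characterization of sheaves for topologies associated to complete $cd$-structures. Recall from \cite{V:cd} that for any $cd$-structure $P$ on a category with an initial object, if $P$ is complete then a presheaf $\sF$ is a sheaf for the topology $t_P$ generated by $P$ if and only if $\sF$ sends the initial object $\emptyset$ to the terminal object (i.e.\ $\sF(\emptyset) = \mathrm{pt}$) and carries every distinguished square in $P$ to a cartesian square of sets. This is essentially Lemma 2.9 (together with the preceding discussion of sheaves associated to $cd$-structures) of loc.~cit.

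The equivariant Nisnevich topology on $\Sch^G_S$ is, by definition, the Grothendieck topology associated to the equivariant Nisnevich $cd$-structure from \aref{defn:dist-square}. By \aref{thm:Comparison}, this $cd$-structure is complete. Hence the cited general criterion applies verbatim and yields exactly the stated equivalence: $\sF$ is a sheaf in the equivariant Nisnevich topology if and only if $\sF(\emptyset) = \mathrm{pt}$ and, for every distinguished equivariant Nisnevich square as in \eqref{eqn:cd-square}, the square
\[
\xymatrix{
\sF(X) \ar[r]\ar[d] & \sF(A)\ar[d] \\
\sF(Y) \ar[r] & \sF(B)
}
\]
is cartesian.

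Since the work of establishing completeness has already been done in \aref{thm:Comparison}, there is no real obstacle remaining; the proof reduces to citing the appropriate statement from \cite{V:cd}. The only point to double-check is that the conventions in \cite{V:cd} (the initial object condition on $\sF$ and the covering generated by a distinguished square being exactly the sieve generated by $j$ and $p$) match the formulation used here, which is exactly the content of the remark preceding the corollary.
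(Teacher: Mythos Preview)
Your approach is essentially the same as the paper's: invoke \aref{thm:Comparison} and then cite Voevodsky's general sheaf criterion for $cd$-topologies from \cite{V:cd}. The paper's own proof is the single line ``This follows from \aref{thm:Comparison} and \cite[Lemma~2.9, Proposition~2.15]{V:cd}.''

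One small point worth tightening: you phrase the input from \cite{V:cd} as requiring only \emph{completeness}, citing Lemma~2.9 alone. The paper additionally cites Proposition~2.15, and this is not superfluous. Completeness gives the ``if'' direction (a presheaf with $\sF(\emptyset)=\mathrm{pt}$ that turns distinguished squares into pullbacks is a sheaf). For the ``only if'' direction one needs that the square of representable sheaves $h_B \to h_A$, $h_B \to h_Y$, $h_A,h_Y \to h_X$ is a pushout in sheaves, so that applying $\Hom(-,\sF)$ yields a pullback; this is precisely what \emph{regularity} buys, via the auxiliary distinguished square \eqref{eqn:CB*1} involving the diagonal $Y\to Y\times_X Y$. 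Since \aref{thm:Comparison} already establishes regularity, the fix is just to say you are using completeness \emph{and} regularity, and to add the reference to \cite[Proposition~2.15]{V:cd}.
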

\begin{proof}
This follows from \aref{thm:Comparison} and \cite[Lemma~2.9, Proposition~2.15]{V:cd}.
\end{proof}

\begin{cor}\label{cor:sub-can}
The equivariant Nisnevich topology is sub-canonical.
\end{cor}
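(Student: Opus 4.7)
The plan is to invoke \aref{cor:Sheaf-Nis} and verify that for every $T\in\Sch^G_S$ the representable presheaf $\Hom_{\Sch^G_S}(-,T)$ satisfies the cartesian square condition on every distinguished equivariant Nisnevich square \eqref{eqn:cd-square}, together with the trivial condition on the empty scheme. Concretely, given a square \eqref{eqn:cd-square}, I need to show that an equivariant morphism $X\to T$ is determined uniquely by compatible equivariant morphisms $A\to T$ and $Y\to T$ agreeing on $B$.

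First I would forget the $G$-action. The underlying diagram \eqref{eqn:cd-square} is a distinguished Nisnevich square of $S$-schemes, and the ordinary Nisnevich topology is known to be sub-canonical (this is the non-equivariant case of \cite[Theorem~2.2]{V:Niscdh}). Hence there is a unique $S$-morphism $f\colon X\to T$ restricting to the given compatible pair on $A$ and $Y$. The only remaining point is to show that this $f$ is automatically $G$-equivariant.

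To check equivariance it suffices to verify that the two composites $\mu_T\circ(\id_G\times f)$ and $f\circ\mu_X$ from $G\times_S X$ to $T$ coincide. Here I would use that the equivariant Nisnevich $cd$-structure is closed under pullback, which was already recorded in the completeness step of the proof of \aref{thm:Comparison}. Pulling \eqref{eqn:cd-square} back along the projection $G\times_S X\to X$ produces a distinguished equivariant Nisnevich square whose terms are $G\times_S A$, $G\times_S Y$, $G\times_S B$, $G\times_S X$. By the non-equivariant sub-canonicity applied to this pullback square, two morphisms out of $G\times_S X$ agree as soon as they agree after restriction to $G\times_S A$ and $G\times_S Y$. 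The assumed $G$-equivariance of $f|_A$ and $f|_Y$ gives exactly this agreement, so $f$ is $G$-equivariant, completing the argument.

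The step I expect to require the most care is the closure of the equivariant Nisnevich $cd$-structure under arbitrary base change, since pulling back along a non-equivariant map such as $G\times_S X\to X$ is what lets equivariance propagate from the pieces to the total space; fortunately this closure is immediate from the $cd$-structure's definition and has already been used in establishing completeness.
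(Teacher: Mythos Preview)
Your proposal is correct and follows the same approach as the paper, which simply asserts that ``it is straightforward to check that a representable presheaf takes a distinguished equivariant Nisnevich square to a cartesian square'' and invokes \aref{cor:Sheaf-Nis}; you have spelled out one clean way to make this straightforward check, by reducing to non-equivariant sub-canonicity and then propagating $G$-equivariance. One small terminological point: in your final paragraph you speak of closure of the \emph{equivariant} $cd$-structure under base change along the non-equivariant map $G\times_S X\to X$, but what you actually use (and all you need) is that the underlying \emph{non-equivariant} distinguished Nisnevich square is preserved under arbitrary pullback, which is immediate.
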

\begin{proof}
It is straightforward to check that a representable presheaf takes a distinguished equivariant Nisnevich square to a cartesian square.
\end{proof}

Write $H^{i}_{GNis}(X, \mcal{F})$ for the $i$th sheaf cohomology group in the $G$-equivariant Nisnevich cohomology.
\begin{cor}\label{cor:Nis-Dim}
Let $\sF$ be a sheaf of abelian groups 
on $\Sch^G_{S/{\rm Nis}}$. Then  
$$
H^i_{GNis}(X, \sF) = 0
$$ 
for $i>\dim(X)$.
\end{cor}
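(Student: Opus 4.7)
My plan is to deduce this directly from Voevodsky's general vanishing theorem for cohomology with respect to complete, regular, bounded $cd$-structures equipped with a locally finite-dimensional density structure (\cite[Corollary~2.18]{V:cd} or the analogous statement therein). The proof is a short invocation of \aref{thm:Comparison} together with a verification that the relevant density dimension coincides with the Krull dimension.

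Concretely, I would proceed in three steps. First, \aref{thm:Comparison} has already established that the equivariant Nisnevich $cd$-structure on $\Sch^G_S$ is complete, regular, and bounded with respect to the density structure $\{D_i\}$ introduced in its proof. Second, I would observe that the density dimension of $X \in \Sch^G_S$ under this density structure is at most $\dim(X)$: by construction $D_i(X)$ is defined by pulling back the density structure of \cite[Proposition~2.10]{V:Niscdh} along the forgetful functor $\Sch^G_S \to \Sch_S$, and the condition is purely topological (the existence of a chain $z=x_0,x_1,\dots,x_i$ of specializations outside $U$) so the maximal such $i$ agrees with the Krull dimension $\dim(X)$.

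Third, I would invoke Voevodsky's vanishing theorem: for any sheaf of abelian groups $\sF$ on a site whose topology arises from a complete, regular, bounded $cd$-structure with density dimension $\le d$ at $X$, one has $H^i(X,\sF)=0$ for $i>d$. Applied to $d=\dim(X)$, this is exactly the statement of the corollary.

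The only real point to verify carefully is the identification of density dimensions in Step~2, since the $cd$-structure itself is modified by the presence of the $G$-action while the density structure is not. But because $D_i(X)$ is defined to be the $G$-equivariant open embeddings which become elements of the nonequivariant density structure after forgetting the action, this identification is essentially tautological; no further work is needed beyond checking that the formal framework of \cite{V:cd} applies. Thus I expect no genuine obstacle — the corollary is simply the specialization of a general result now that the hypotheses of \aref{thm:Comparison} have been secured.
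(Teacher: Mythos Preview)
Your proposal is correct and follows essentially the same approach as the paper: the paper's proof is the single line ``This follows from \aref{thm:Comparison} and \cite[Theorem~2.27]{V:cd},'' which is exactly your Step~1 and Step~3 combined. Your Step~2, making explicit that the density dimension coincides with the Krull dimension, is left implicit in the paper (it is built into the density structure borrowed from \cite[Proposition~2.10]{V:Niscdh}); note also that the paper cites Theorem~2.27 rather than Corollary~2.18 of \cite{V:cd}, so you may want to double-check the precise numbering.
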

\begin{proof}
This follows from \aref{thm:Comparison} and \cite[Theorem~2.27]{V:cd}.
\end{proof}
\begin{remark}
 All of the statements above hold as well for $\Sm^{G}_{S}$.
\end{remark}

The original definition of the equivariant Nisnevich topology, due to Voevodsky \cite[Section 3.1]{Deligne}, defined the covers in terms of an equivariant  splitting property. 
We show that the definition given here in terms of a $cd$-structure agrees with that definition. See \aref{section:finite} for another characterization when the group is finite.

\begin{defn}\label{defn:splitting}
An equivariant morphism $Y\to X$ in $\Sch^G_S$ has an {\sl equivariant  splitting sequence} 
if there is a filtration of $X$ by invariant closed subschemes
\begin{equation}\label{eqn:split*-0}
\emptyset = X_{n+1} \subsetneq X_{n} \subseteq \cdots \subseteq X_0 = X, 
\end{equation}
such that for each $j$, the map
\[
\left(X_j \setminus X_{j+1}\right) \times_X Y \to X_j \setminus X_{j+1}
\]
has an equivariant section. We say that this splitting sequence has length $n$.
\end{defn}

\begin{prop}\label{prop:Nisne-split}
An equivariant \'etale morphism $Y \xrightarrow{f} X$ in $\Sch^G_S$ is an equivariant Nisnevich cover if and only if it has an equivariant splitting sequence.
\end{prop}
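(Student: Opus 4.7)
The plan is to establish both implications, the substantive one being $(\Leftarrow)$.

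A key local construction underlies $(\Leftarrow)$: given an equivariant étale morphism $p\colon Y\to X$ and an equivariant closed subscheme $Z\subseteq X$ admitting an equivariant section $\sigma\colon Z\to Y|_Z$, the image $\sigma(Z)$ is equivariant clopen in $Y|_Z$, since any section of a separated étale morphism is a clopen immersion. The complement $W:=Y|_Z\setminus\sigma(Z)$ is closed in $Y|_Z$, and since $Y|_Z$ is closed in $Y$, $W$ is closed in $Y$. Setting $V:=Y\setminus W$ yields an equivariant open subscheme of $Y$ such that $V\cap Y|_Z=\sigma(Z)$, which is isomorphic to $Z$ via $p$. This produces a distinguished equivariant Nisnevich square
\[
\xymatrix{V\cap Y|_{X\setminus Z}\ar[r]\ar[d] & V\ar[d]^{p|_V}\\ X\setminus Z\ar@{^{(}->}[r] & X}
\]
whose étale leg $V\to X$ factors through $p$ via $V\subseteq Y$.

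To prove $(\Leftarrow)$, I would induct on the length $n$ of the splitting sequence. The base case $n=0$ is immediate: a global section $s_0\colon X\to Y$ places $\mathrm{id}_X$ in the sieve $S$ generated by $p$, making $S$ the maximal sieve. For the inductive step $n\ge 1$, apply the local construction with $Z=X_n$ and section $s_n$ to obtain a distinguished equivariant Nisnevich square whose associated covering sieve $T$ has both legs controlled. The étale leg $V\to X$ lies in $S$, so $S|_V$ is maximal. The open leg $X\setminus X_n\hookrightarrow X$ has $S|_{X\setminus X_n}$ equal to the sieve generated by the restriction $p|_{X\setminus X_n}\colon Y|_{X\setminus X_n}\to X\setminus X_n$, and this restriction inherits an equivariant splitting sequence of length at most $n-1$ from the filtration $\{X_j\setminus X_n\}$ with sections $s_j$ for $j<n$; by the inductive hypothesis, $S|_{X\setminus X_n}$ is covering. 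The transitivity axiom for Grothendieck topologies then yields that $S$ covers $X$.

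For $(\Rightarrow)$, the plan is Noetherian induction on $X$. Since the equivariant Nisnevich cd-structure is complete and bounded (\aref{thm:Comparison}), any covering sieve is refined by a finite iterated tree of distinguished equivariant Nisnevich squares whose leaves have maximal local sieves. Descending a branch of open immersions in this tree yields a non-empty equivariant open $U_0\subseteq X$ whose inclusion lies in $S$, providing an equivariant section $s_0\colon U_0\to Y$ of $p$. Setting $X_1:=X\setminus U_0$, a proper equivariant closed subscheme, the pullback of $S$ is the covering sieve on $X_1$ generated by $p|_{X_1}$; Noetherian induction then provides an equivariant splitting sequence for $p|_{X_1}$, and prepending $X_0=X$ with $s_0$ yields the required splitting sequence for $p$.

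The principal obstacle lies in $(\Rightarrow)$: extracting a non-empty equivariant open $U_0\subseteq X$ with equivariant section from the cd-structure tree. Boundedness ensures termination, and the equivariance built into each distinguished square guarantees the extracted section is equivariant; however, one must carefully verify that some branch of the tree consists of open immersions reaching a leaf with maximal local sieve, which is the combinatorial heart of the analysis of iterated distinguished squares.
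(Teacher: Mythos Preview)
Your argument for $(\Leftarrow)$ is essentially the paper's: both induct on the length of the splitting sequence, both build the same distinguished square by excising from $Y$ the invariant closed complement of the image of the section over $X_n$ (your $V$ is exactly the paper's $\widetilde{Y}:=Y\setminus D$), and both finish by checking the two legs of that square separately. Your phrasing via sieves and transitivity is equivalent to the paper's phrasing via pullbacks of $f$ along the two legs.

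For $(\Rightarrow)$ the overall strategy---Noetherian induction after locating a nonempty invariant open on which $f$ admits an equivariant section---also matches the paper, but the mechanism for producing that open differs. The paper does \emph{not} analyze a tree of iterated distinguished squares. Instead it argues directly that every equivariant Nisnevich cover has an equivariant section over a \emph{dense} invariant open: this is clear for the cover $A\coprod Y\to X$ arising from a single distinguished square (over each generic point one has a section, either via $A$ or via the isomorphism $(Y\setminus B)_{\rm red}\cong (X\setminus A)_{\rm red}$, and these spread out), and the property is visibly stable under pullback and composition, hence holds for all covers in the generated topology. One then restricts to an invariant closed complement and repeats.

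This sidesteps precisely the gap you flagged. Your plan to ``descend a branch of open immersions'' to a leaf with maximal local sieve can fail to produce anything nonempty: a distinguished square with $A=\emptyset$ sends the open branch immediately to $\emptyset$, while the \'etale leg $Y\to X$ is only an isomorphism on reductions, so one has to back up and argue further. This is fixable with more bookkeeping (e.g.\ tracking a generic point of $X$ through the tree and switching to the \'etale branch when it leaves $A$), but the paper's density argument avoids the cd-tree combinatorics entirely and is the cleaner route.
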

\begin{proof}

Suppose that $f:Y\to X$ is an equivariant Nisnevich cover.  Note that there is a dense invariant open subscheme $U\subseteq X$ on which $f$ has a splitting. Indeed, this is true by definition for covers coming from distinguished squares and this property is preserved by pullbacks and by compositions. An invariant open subscheme has an invariant closed complement. Restricting to an invariant closed complement of $U$ and repeating the argument, we construct an equivariant splitting sequence, which must stop at a finite stage because $X$ is Noetherian.

For the converse, we proceed by induction on the length of a splitting sequence. The case of length zero is immediate.  Suppose that $f$ has an equivariant splitting sequence of length $n$. The restriction of $f$ to $X_{n}\times_{X}Y\to X_{n}$ has an equivariant section $s$. Since $s$ is equivariant and \'etale, $s(X_{n})\subseteq X_{n}\times_{X} Y$ is an invariant open. Let $D$ be an invariant closed complement. 
Consider the map $\widetilde{Y}:=Y\setminus D\to X$. 
Then $\{\widetilde{Y}\to X,\,X-X_{n}\}$ forms an equivariant distinguished covering of $X$. 
The pullback of $f:Y\to X$ along $X-X_{n}$ has an equivariant splitting sequence of length less than $n$ and so by induction is an equivariant Nisnevich cover. Similarly the pullback of $f$ along $\widetilde{Y}\to X$ equivariantly splits and is thus also an equivariant Nisnevich cover. It follows that $f$ itself is an equivariant Nisnevich covering.
\end{proof}

\begin{exm}\label{exm:Ison-H-1}
Let $C_2=\<\sigma\>$ be the cyclic group of order two. Let
$X$ denote the smooth $C_2$-scheme over $\Spec(\R)$ defined by $\Spec(\C)$ equipped with complex conjugation. Let $Y= \Spec(\C)\coprod \Spec(\C)$ with $C_2$-action given by switching the factors. Let $f:Y\to X$ be given by the identity on one factor and 
complex conjugation on the other factor.
Then  $f$ is a Nisnevich cover after forgetting the $C_2$-action but is not locally equivariantly split and so is not an equivariant Nisnevich cover.
\end{exm}

\subsection{Finite groups}\label{section:finite}
In this section we focus on the case of a finite constant group scheme. 
Throughout this subsection, $G$ is a finite group (in the category of sets). The associated group scheme over $S$ given by $\coprod_{G} S$ is denoted as well by $G$. Given a subgroup $H\subseteq G$ and an $H$-scheme $Z$ we write $G\times^{H} Z:= (G\times Z)/H$.

If $X$ is a $G$-scheme and $x\in X$ is a point, the {\sl set-theoretic} stabilizer of $x$ is the subgroup  $S_x\subseteq G$ defined by 
$S_{x}= \{g\in G \mid g\cd x =x\}$. 
The orbit of $x$ is $G\cd x: = G\times^{S_{x}} x$, which has underlying set  $\{g\cd x \mid g\in G\}$.

\begin{prop}\label{prop:nischar}
An equivariant \'etale map $f:Y\to X$
 in $\Sch^G_S$ is an equivariant Nisnevich cover if and only if 
for any point $x \in X$ there is a point $y \in Y$ such that $f(y)= x$ and $f$ induces isomorphisms $k(x)\iso k(y)$ and $S_{y}\iso S_{x}$.
\end{prop}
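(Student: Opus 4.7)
The plan is to leverage \aref{prop:Nisne-split}, which identifies equivariant Nisnevich covers with equivariant étale maps admitting an equivariant splitting sequence, and translate both directions through this characterization.

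The forward direction is essentially bookkeeping. Given a splitting sequence $\emptyset = X_{n+1}\subsetneq X_n\subseteq\cdots\subseteq X_0=X$ with $G$-equivariant sections $s_j\colon X_j\setminus X_{j+1}\to Y$, for $x\in X$ take the unique $j$ with $x\in X_j\setminus X_{j+1}$ and put $y:=s_j(x)$. Since $s_j$ is a section of $f$, on residue fields the map $f^*\colon k(x)\to k(y)$ has $s_j^*$ as a retraction, which forces both to be isomorphisms; and combining equivariance of $s_j$ with that of $f$ yields $S_y=S_x$.

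For the converse I would argue by Noetherian induction on $X$: it suffices to produce a $G$-invariant dense open $U\subseteq X$ on which $f$ admits an equivariant section, then iterate on the $G$-invariant closed complement. To build $U$, pick an irreducible component $X_0\subseteq X$ with generic point $\eta$, let $H:=S_\eta$ (equivalently, the setwise stabilizer of $X_0$ in $G$), and use the hypothesis to choose $y\in Y$ over $\eta$ with $k(y)=k(\eta)$ and $S_y=H$. Étale-local structure then provides an open $U_0\ni\eta$ in $X$ and a section $s_0\colon U_0\to Y$ sending $\eta$ to $y$. I would shrink $U_0$ to
\[
U_1 := \bigcap_{h\in H} h\bigl(U_0\cap(X\setminus \textstyle\bigcup_{X_i\neq X_0} X_i)\bigr),
\]
an irreducible $H$-stable open inside $X_0$ containing $\eta$. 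For each $h\in H$, the twisted section $x\mapsto h\cdot s_0(h^{-1}x)$ coincides with $s_0$ at $\eta$ (because $h\in S_y$); since the equalizer of two sections of a separated étale morphism is clopen, irreducibility of $U_1$ forces agreement everywhere, making $s_0|_{U_1}$ automatically $H$-equivariant. Finally, because distinct cosets of $H$ move $X_0$ to distinct irreducible components of $X$, the translates $gU_1$ are pairwise disjoint as $gH$ ranges over $G/H$, so the formula $s(g\cdot u):= g\cdot s_0(u)$ glues to a $G$-equivariant section on $U := \bigsqcup_{gH\in G/H} gU_1$, with well-definedness guaranteed by the $H$-equivariance just established.

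The step I expect to be the main obstacle is this equivariantization: one starts with only a Zariski-local section, and $H$-translates of $s_0$ need not a priori agree. The trick that defuses it is shrinking $U_0$ into a single irreducible component, which upgrades the clopen-equalizer observation into the statement that two sections agreeing at one point agree everywhere---reducing $H$-equivariance to a point-check and $G$-equivariance to a disjoint gluing problem.
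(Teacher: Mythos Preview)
Your argument is correct and follows the same overall scaffold as the paper (reduce to \aref{prop:Nisne-split}, do the forward direction by reading off $y=s_j(x)$, and for the converse use Noetherian induction to a dense invariant open with an equivariant section). Where you diverge is in how you manufacture that equivariant section. The paper first reduces to $X$ equivariantly irreducible and then argues at the level of orbits: the hypotheses $k(\eta')=k(\eta)$ and $S_{\eta'}=S_\eta$ say precisely that the equivariant map $G\cdot\eta'\to G\cdot\eta$ is an isomorphism of $G$-schemes, and since $G\cdot\eta'$ is the filtered intersection of the invariant opens in $Y$ containing $\eta'$, a spreading-out argument produces an invariant open $W'\subseteq Y$ with $W'\xrightarrow{\sim} f(W')$, whose inverse is the sought section. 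Your route is instead to build a \emph{non}-equivariant section near the generic point of a single component, shrink to an $H$-stable irreducible open, use the clopen-equalizer trick (together with $S_y=H$) to force $H$-equivariance, and then induce up along $G/H$. The paper's approach is shorter and more conceptual---it packages the two hypotheses into a single statement about orbit schemes and invokes one limit argument---while yours is more hands-on and makes visible exactly where each hypothesis ($k(y)=k(\eta)$ for existence of $s_0$; $S_y=S_\eta$ for $H$-equivariance) is spent. One small point: your $U=\bigsqcup_{gH} gU_1$ is dense only in the $G$-orbit of $X_0$, not necessarily in $X$; this is harmless for the Noetherian induction (nonempty invariant open suffices), but you may want to say so, or else reduce to the equivariantly irreducible case first as the paper does.
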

\begin{proof}
This is proved in \cite[Proposition 3.5]{HVO} when $S=\Spec(k)$ is a field. The same proof applies for a general base $S$. For convenience, we repeat the proof here.
First, by \aref{prop:Nisne-split}, if $f$ is an equivariant Nisnevich cover then it has an equivariant splitting sequence as in (\ref{eqn:split*-0}). Then 
$x\in  X_{j}-X_{j+1}$ for some $j$. Let $s$ be a section of $f$ over $X_{j}-X_{j+1}$ and let $y = s(x)$. One immediately verifies that $f$ induces an isomorphism $k(x) \iso k(y)$ and $S_{y}\iso S_{x}$.

For the other direction, again by \aref{prop:Nisne-split}, it suffices to show that $f$ has a splitting sequence. By Noetherian induction, it suffices to show that if for each generic point $\eta\in X$ there is $\eta'\in Y$ so that $f$ induces $k(\eta)\iso k(\eta')$ and $S_{\eta}\iso S_{\eta'}$ then there is an equivariant dense open $U\subset X$ such that $Y\times_{X}U\to U$ has an equivariant splitting. To show this it suffices to assume that $X$ is equivariantly irreducible. Let $\eta\in X$ be a generic point. Then there is an $\eta'\in Y$ such that 
$f:\eta'\iso \eta$ and $S_{\eta'}\iso S_{\eta}$. This implies that 
$G\cd \eta'\to G\cd \eta$ is an equivariant isomorphism. 
We have that $G\cd \eta'\iso \cap W'$  
(resp.~$G\cd \eta$) is the intersection over all invariant opens $W'$ in $Y$ containing $\eta'$ (resp.~ all invariant opens in $X$) and so there is some invariant open $W'\subseteq Y$ such that $W'\to f(W')$ is an equivariant isomorphism. Setting $U=f(W')$ we obtain our equivariant splitting. 
\end{proof}

\begin{remark}
The proof of the previous proposition shows as well the following useful fact: the version of the equivariant Nisnevich topology defined using ``infinite'' covers yields a site which is equivalent to the one we have defined here. 
\end{remark}

\begin{cor}
\label{cor:finitesubcovering}
Let $\{f_{i}:Y_{i}\to X\}_{i\in I}$ be a collection of equivariant \'etale maps in $\Sch_{S}^{G}$ such that for every $x\in X$ there is an index $i=i(x)\in I$ and a point $y\in Y_{i}$ such that $f_{i}$ induces isomorphisms $k(x)\iso k(y)$ and $S_{y}\iso S_{x}$.  
Then there is a finite sub-collection $\{Y_{i_{j}}\to X\}_{j=1}^{n}$ which is an equivariant Nisnevich cover.
\end{cor}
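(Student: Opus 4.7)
The plan is to reduce to \aref{prop:Nisne-split} by exhibiting a finite subset $J \subseteq I$ such that $Y := \coprod_{j \in J} Y_j \to X$ admits an equivariant splitting sequence; the finite subcollection $\{Y_{j} \to X\}_{j\in J}$ will then automatically be an equivariant Nisnevich cover.

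I would proceed by Noetherian induction on $G$-invariant closed subschemes of $X$. For the main inductive step, $X$ has only finitely many irreducible components, hence only finitely many $G$-orbits of generic points; choose orbit representatives $\eta_1,\ldots,\eta_m$. For each $\eta_k$ the hypothesis supplies an index $i_k \in I$ together with a point of $Y_{i_k}$ lying over $\eta_k$ with matching residue field and set-theoretic stabilizer. The argument in the second half of the proof of \aref{prop:nischar} (which promotes a generic-point lifting to an equivariant isomorphism on a $G$-invariant open) upgrades this to an equivariant section of $f_{i_k}$ over a $G$-invariant open $U_k \subseteq X$ containing the orbit $G\cdot \eta_k$. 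Because the $U_k$ collectively contain every generic point of $X$, their union $V := U_1 \cup \cdots \cup U_m$ is a $G$-invariant dense open.

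Setting $X_0 = X$ and $X_k = X \setminus (U_1 \cup \cdots \cup U_k)$ for $1 \le k \le m$ produces a filtration by $G$-invariant closed subschemes in which each stratum $X_{k-1}\setminus X_k \subseteq U_k$ carries the restricted equivariant section into $Y_{i_k} \hookrightarrow \coprod_\ell Y_{i_\ell}$. The bottom term $X_m = X \setminus V$ is a proper $G$-invariant closed subscheme, and the restricted collection $\{f_i^{-1}(X_m) \to X_m\}_{i\in I}$ inherits the pointwise-lifting hypothesis of the corollary. By Noetherian induction we obtain a finite $J' \subseteq I$ together with an equivariant splitting sequence for $\coprod_{j \in J'} Y_j\times_X X_m \to X_m$. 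Concatenating this with the filtration from the main step yields an equivariant splitting sequence for $Y \to X$ with $J = \{i_1,\ldots,i_m\} \cup J'$, and \aref{prop:Nisne-split} then delivers the conclusion.

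The only point requiring care is verifying that the invariant opens $U_k$ with their locally chosen sections assemble into a filtration satisfying \aref{defn:splitting}, rather than just giving an open covering with local sections; this is precisely what the nested-complements construction $X_k = X \setminus (U_1 \cup \cdots \cup U_k)$ accomplishes, since each difference $X_{k-1} \setminus X_k$ is contained in $U_k$ where the chosen section is defined. Termination of the Noetherian induction is automatic from $X$ being Noetherian, and no additional hypothesis on $G$ is needed beyond flatness (used to ensure invariance of opens is preserved under forming orbits, as in \aref{lem:Density}).
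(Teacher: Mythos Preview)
Your proposal is correct and is essentially the approach the paper intends: the corollary is stated without proof precisely because it follows by rerunning the Noetherian-induction argument in the second half of the proof of \aref{prop:nischar}, noting that at each stage only finitely many indices from $I$ are used. Your write-up spells this out carefully, including the nested-complements filtration needed to turn local sections on overlapping opens into a genuine splitting sequence in the sense of \aref{defn:splitting}.
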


We now turn our attention to the points of the equivariant Nisnevich topology.
Recall that a {\sl point} $x$ on a Grothendieck site $\sC$ is a functor $x^*\colon\Shv(\sC)\to\Sets$ from sheaves on $\sC$ to sets which 
commutes with all small colimits and finite limits. 
Such a functor has a right adjoint $x_*\colon\Sets\to\Shv(\sC)$ by Freyd's adjoint functor theorem. 
An explicit description of the points of the equivariant Nisnevich topology is provided in \cite{Deligne} (in the case of quasi-projective $G$-schemes). We proceed in a somewhat different fashion to describe the points. 
If the orbit $G\cd x$ of a point $x\in X$  is contained in an affine neighborhood, then $\sO_{X,Gx}$ is a semilocal ring. In this case  
we let $\sO^h_{X, Gx}$ denote the henselization of the semilocal ring $\sO_{X, Gx}$ along the ideal defining the scheme $G\cd x$. Note that the semilocal ring $\sO_{X,Gx}^{h}$ has a $G$-action coming from the action on $X$ because henselization is functorial. In general, $G\cd x$ is not contained in an invariant affine Zariski neighborhood. 
Instead, we consider the category $N_{G}(G\cd x)$  of affine equivariant Nisnevich neighborhoods 
of $G\cd x$. An {\sl equivariant Nisnevich neighborhood} of $G\cd x$ is  an equivariant \'etale map $f:Y\to X$ and an equivariant map $s:G\cd x\to Y$ such that the triangle commutes
$$
\xymatrix@C1pc{
 & Y \ar[d]^{f} \\
 G\cd x \ar[r]\ar[ur]^{s} & X .
}
$$
The category $N_{G}(G\cd x)$ is filtering. We note that it is also nonempty. 

\begin{lemma}\label{lem:affinenbd}
Any orbit $G\cd x$ is contained in an affine equivariant Nisnevich neighborhood.
\end{lemma}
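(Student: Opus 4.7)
The plan is to reduce the problem to finding an invariant affine Zariski neighborhood of $x$ for the action of its stabilizer $H := S_x$, and then to ``induce up'' from $H$ to $G$ via the construction $G\times^{H}(-)$.

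First, I would choose any affine Zariski open $U_0\subseteq X$ containing $x$. Because $H$ fixes $x$, one has $x\in h\cd U_0$ for every $h\in H$, and so
\[
U := \bigcap_{h\in H} h\cd U_0
\]
is an $H$-invariant open neighborhood of $x$. The key point is that $U$ is affine: each $h\cd U_0$ is an affine open of the separated scheme $X$, and a finite intersection of affine opens in a separated scheme is itself affine (the pairwise intersection $V\cap W$ is the pullback of the closed diagonal $X\to X\times X$ along $V\times W\to X\times X$, hence is a closed subscheme of the affine scheme $V\times W$; iterate).

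Next, I would form the induced scheme $Y:= G\times^{H}U$. Since $H$ has finite index in the finite group $G$, as a scheme $Y$ is (non-canonically) a finite disjoint union $\coprod_{gH\in G/H}U$, and hence affine. The natural map $f\colon Y\to X$, $[g,u]\mapsto g\cd u$, is $G$-equivariant and, on each component corresponding to a coset representative $g_i$, is identified with the open immersion $g_i\cd U\hookrightarrow X$; in particular $f$ is \'etale. Applying the functor $G\times^{H}(-)$ to the inclusion $\{x\}\hookrightarrow U$ produces a $G$-equivariant section
\[
s\colon G\cd x = G\times^{H}\{x\} \longrightarrow G\times^{H}U = Y,
\]
whose composition with $f$ is the canonical map $G\cd x\to X$. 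Thus $(Y,s)\in N_G(G\cd x)$ with $Y$ affine, as desired.

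The main obstacle is securing the affineness of the intersection $U$; this is precisely where the standing separatedness hypothesis on objects of $\Sch_S^{G}$ is essential. Everything else is formal bookkeeping, made possible by the finiteness of $G$ (which guarantees both that the intersection defining $U$ is finite and that $G/H$ has finitely many cosets, so that $Y$ is affine).
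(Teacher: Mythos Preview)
Your argument is correct and follows exactly the approach of the paper: the paper's proof simply asserts that $x$ has an affine $S_x$-invariant neighborhood $U$ and then forms $G\times^{S_x}U\to X$, while you supply the (standard) justification for the existence of such a $U$ via intersecting translates and then carry out the induction step in detail. The only minor point is that your affineness argument for $V\cap W$ should be read with the product taken over $\Spec\Z$ (which is fine since $X\to S$ and $S\to\Spec\Z$ are both separated), but this does not affect the validity of the proof.
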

\begin{proof}
The point $x$ is contained in an affine $S_x$-invariant neighborhood $U$. Note that the map $G\times^{S_{x}}U\to X$
is an equivariant Nisnevich neighborhood of $G\cd x$.
\end{proof}

We define
\begin{equation}\label{eqn:Ghensel}
X^{h}_{Gx}:= \lim _{U\in N_{G}(G\cd x)} U.
\end{equation}
The transition maps of this filtered limit are affine and so this limit exists as a scheme. The $G$-action on $X$ induces one on $X^{h}_{Gx}$.

If $U\to X$ and $g \in G$, 
the {\sl translate} of $U$ by $g$ is the scheme $g(U)$ defined by the cartesian square
\begin{equation*}
\xymatrix@C1pc{
g(U) \ar[r] \ar[d]_{g(f)} & U \ar[d]^{f} \\
X \ar[r]_{\tau_{g^{-1}}} & X,}
\end{equation*}
where $\tau_{g^{-1}}: X \to X$ is the automorphism defined by $g^{-1}$ via the $G$-action on $X$.  
When $G = \{e = g_0, \cdots , g_n\}$ is finite we can iteratively form the fiber product 
$$
U_G:= U \times_X \ g_1(U)
\times_{X} \cdots \times_{X} \ g_n(U),
$$
using the maps $g_i(f):g_i(U) \rightarrow X$. 
Now suppose that $Z \subseteq X$ is a $G$-invariant subset 
and $U\to X$ is a Nisnevich neighborhood of $Z$. It is straightforward to check that $U_G\to X$ is an equivariant Nisnevich neighborhood of 
$Z$ and there is a factorization
$(U_G, Z) \to (U, Z) \to (X, Z)$. 
One now readily sees that when $G\cd x$ is contained in an invariant affine neighborhood then $X^h_{Gx}\cong \Spec(\sO^{h}_{X,Gx})$. More generally we see that 
$$
X^{h}_{Gx} \cong G\times^{S_{x}}\Spec(\sO^{h}_{X,x}).
$$

Let $X$ be a smooth $G$-scheme over $S$. For any  $x\in X$, we define a functor $x^*: \Sm^G_S \to\Sets$ by setting 
$\underline{x}(U) := \Hom(X^h_{Gx}, U)$,
where the morphism set is taken in the category of all $S$-schemes with $G$-action (not necessarily of finite type). 
We extend this to a functor $x^*: \Shv(\Sm^G_{S/{\rm Nis}})\to \Sets$ via
the left Kan extension of $x^*$ along the Yoneda embedding. Explicitly, 
$x^*F = \colim_{U\in  N_{G}(Gx)}\mcal{F}(U)$.
In an entirely analogous fashion, we define a point $x^*$ on 
$\Sch^G_{S/{\rm Nis}}$ for any point $x$ of a $G$-scheme $X$ over $S$.
We usually write $F(X^h_{Gx}) = x^*F$.

\begin{prop}\label{prop:Point-Cons}
Let $G$ be a finite group. The collection 
$\{ x^* \, \vert \, X \in \Sm^G_S, x \in X\}$ 
(resp.~ $\{x^* \, \vert \, X \in \Sch^G_S, x \in X\}$) 
forms a conservative family of points on the site $\Sm^G_{S/{\rm Nis}}$ (resp.~ on $\Sch^G_{S/{\rm Nis}}$).
\end{prop}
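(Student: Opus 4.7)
The plan is to verify in turn that each $x^*$ is a point of the site, and that the collection jointly detects isomorphisms of sheaves. The argument for $\Sch^G_{S/\Nis}$ is identical to that for $\Sm^G_{S/\Nis}$, so I focus on the smooth case.

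First I would check that each $x^*$ is a point. The category $N_G(Gx)$ is filtered: it is nonempty by \aref{lem:affinenbd}, any two equivariant Nisnevich neighborhoods of $Gx$ are dominated by their fiber product over $X$ (which is again an equivariant Nisnevich neighborhood of $Gx$), and parallel morphisms can be equalized by passing to a suitable open subneighborhood. Since $x^*\sF = \colim_{U \in N_G(Gx)} \sF(U)$ is a filtered colimit in $\Sets$, it commutes with finite limits; and being the left Kan extension of a presheaf-valued functor along the Yoneda embedding, $x^*$ commutes with all small colimits. Hence each $x^*$ is a point.

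The heart of the proposition is conservativity. Let $\varphi\colon \sF \to \sG$ be a morphism of sheaves such that $x^*\varphi$ is a bijection for every point $x$ of every smooth $G$-scheme; I aim to show each $\varphi_X$ is a bijection. For injectivity, take $s, s' \in \sF(X)$ with $\varphi_X(s) = \varphi_X(s')$. For every $x \in X$, the filtered colimit description of $x^*$ furnishes an equivariant Nisnevich neighborhood $U_x \to X$ of $Gx$ on which $s$ and $s'$ agree. By definition of an equivariant Nisnevich neighborhood, $U_x$ contains a point $\tilde x$ over $x$ with $k(x) \iso k(\tilde x)$ and $S_{\tilde x} \iso S_x$, so the family $\{U_x \to X\}_{x \in X}$ meets the hypothesis of \aref{cor:finitesubcovering} and admits a finite equivariant Nisnevich subcover. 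The sheaf condition (\aref{cor:Sheaf-Nis}) then forces $s = s'$. For surjectivity, given $t \in \sG(X)$, the surjectivity of each $x^*\varphi$ produces, for each $x$, a neighborhood $U_x \to X$ of $Gx$ and a section $s_x \in \sF(U_x)$ with $\varphi(s_x) = t|_{U_x}$. Extracting a finite subcover as above and invoking the injectivity already proven to match the $s_x$ on pullbacks $U_x \times_X U_{x'}$, the sheaf gluing axiom delivers a global $s \in \sF(X)$ with $\varphi_X(s) = t$.

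The main obstacle lies in the finite-subcover step, which rests on \aref{cor:finitesubcovering} and its underlying splitting-sequence characterization in \aref{prop:Nisne-split}, together with the explicit identification of the pro-object $X^h_{Gx}$ in terms of orbit Henselizations. Once these ingredients are in place, the rest of the argument runs along the classical template showing that Henselian stalks form a conservative family of points on the Nisnevich site.
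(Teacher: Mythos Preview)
Your conservativity argument---injectivity and surjectivity of each $\varphi_X$ from the stalk hypotheses, the sheaf axiom, and \aref{cor:finitesubcovering}---is correct and takes a different, more hands-on route than the paper. The paper instead appeals to the criterion of SGA4, Exp.~IV, Prop.~6.5: the family of pro-objects $\{X^h_{Gx}\}$ yields a conservative family of points exactly when every family $\{U_i \to U\}$ that is jointly surjective at all stalks is refined by a genuine cover. The paper verifies this by lifting the identity of $U^h_{Gu}$ through some $U_i$ and spreading the lift out to an equivariant Nisnevich neighborhood of $Gu$; this single geometric check handles both ``point'' and ``conservative'' at once.

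There is, however, a gap in your verification that each $x^*$ is a point. The left-Kan-extension argument only shows $x^*$ preserves colimits of \emph{presheaves}; a point of $\Shv(\Sm^G_{S/\Nis})$ must preserve colimits of \emph{sheaves}, and those are computed by sheafifying the presheaf colimit. What you actually need is that $x^*$ inverts sheafification, i.e., that every equivariant Nisnevich cover $V \to U$ of an object $U \in N_G(Gx)$ admits an equivariant section after restriction to some smaller $U' \in N_G(Gx)$. This is the henselian property of $X^h_{Gx} \cong G \times^{S_x} \Spec(\sO^h_{X,x})$ and follows from \aref{prop:nischar} together with ordinary henselian lifting for $\sO^h_{X,x}$, but it is a genuine step and your argument as written does not supply it.
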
 
\begin{proof}
We treat the case $\Sm^{G}_{S}$, the case of $\Sch^{G}_{S}$ being verbatim. By \cite[Proposition~6.5.a]{SGA4Tome1} it is enough to show that for $U \in \Sm^G_S$ and $\{f_i: U_i \to U\}_{i \in I}$ a family of $G$-equivariant 
maps such that $\{x^*(U_i) \to x^*(U)\}_{i \in I}$ is surjective for all $X \in \Sm^G_S$ and all $x \in X$, 
the map $\{f_i\}$ is dominated by an equivariant Nisnevich cover of $U$.
Suppose that $\{x^*(U_i) \to x^*(U)\}_{i \in I}$ is a surjective family for all pairs $(X, G\cd x)$. 
For $u \in U$ and the induced $G$-equivariant map $v: (U^h_{Gu}, G\cd u) \to 
(U, G\cd u)$, there exists,
by our assumption, 
an index $i \in I$ and a $G$-equivariant factorization
\begin{equation*}\label{eqn:Point*-1}
\xymatrix@C1.2pc{
& U_i \ar[d]^{f_i} \\
U^h_{Gu} \ar[ur]^{w}\ar[r]_{v} & U.}
\end{equation*}
Notice that $w$ is an isomorphism when restricted to $G\cd u$, and hence it gives a section of $f_i$ over $G\cd u$. 
Since $(U^h_{Gu}, G\cd u)$ is the filtered limit of equivariant Nisnevich neighborhoods of $G\cd u$ and $f$ is a $G$-equivariant finite type morphism, 
there is an equivariant Nisnevich neighborhood $(U'_i, G\cd u)$ and a $G$-equivariant factorization $(U'_i,G\cd u)\xrightarrow{w}(U_i,G\cd u)\xrightarrow{f_i} (U,G\cd u)$. 
The point $u \in U$ was chosen arbitrarily,
so we deduce the desired domination of $\{f_i\}$.
\end{proof}

\section{Local model structures}\label{section:Models}
Let $\sS$ denote the category of simplicial sets with internal hom $\sS(-,-)$ defined in \cite[I.5]{GoerssJardine}. 
Similarly, 
let $\sS_{\bullet}$ denote the category of pointed simplicial sets with internal hom $\sS_{\bullet}(-,-)$. 
\begin{definition}
A \textsl{motivic $G$-space} $\sX$ is a presheaf 
$\sX:(\Sm^G_S)^{op} \to \sS$ of simplicial sets. A \textsl{pointed motivic $G$-space} is a presheaf $(\Sm^G_S)^{op} \to \sS_{\bullet}$ of pointed simplicial sets.
\end{definition}
By the finite type condition the category $\Sm^G_S$ is essentially small, 
i.e.,
it is locally small with a small set of isomorphism classes of objects.
Let $\sM^G(S)$ (resp.~$\sM^G_{\bullet}(S)$) denote the category of motivic (resp.~pointed motivic) $G$-spaces. 
We identify $\sS$ with the full subcategory of $\sM^G(S)$ comprised of constant motivic $G$-spaces. 
The Yoneda lemma yields a fully faithful embedding of $\Sm^G_S$ into $\sM^G(S)$ associating an object $X$ of $\Sm^G_S$ with the representable motivic 
$G$-space $h_{X}(-):= \Hom_{\Sm^G_S}(-, X)$ which takes values in discrete simplicial sets. 
We will usually make no notational distinction between $X$ and $h_X$. 
It follows from \aref{cor:sub-can} that $h_X$ is a sheaf in the equivariant Nisnevich topology.  
A pointed motivic $G$-space is a motivic $G$-space $\sX$ together with a map $pt = h_S \to \sX$. 
For $X \in \Sm^G_S$, the symbol $X_{+}$ denotes the pointed motivic $G$-scheme $(X \coprod pt, pt)$. 
We note the following useful fact about $\sM^G_{\bullet}(S)$.

\begin{lem}\label{lem:elementary}
The category $\sM^G_{\bullet}(S)$ is both a closed symmetric monoidal category and a locally finitely presented bicomplete $\sS_{\bullet}$-category. 
In particular, filtered colimits commute with finite limits.
\end{lem}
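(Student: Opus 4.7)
The plan is to verify each assertion by transporting structure from $\sS_\bullet$ to the presheaf category $\sM^G_\bullet(S)$ pointwise, using that $\Sm^G_S$ is essentially small.

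First I would establish bicompleteness. Since $\sS_\bullet$ is bicomplete, limits and colimits of diagrams in $\sM^G_\bullet(S)$ exist and are computed sectionwise. This also immediately gives the claim that filtered colimits commute with finite limits: this property holds in $\sS_\bullet$ (it holds in $\Sets$ and extends to pointed simplicial sets degreewise), and since both operations in $\sM^G_\bullet(S)$ are computed sectionwise, the commutation descends to the presheaf category.

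Next I would endow $\sM^G_\bullet(S)$ with its closed symmetric monoidal structure. For pointed motivic $G$-spaces $\sX$ and $\sY$, define the smash product sectionwise via $(\sX\wedge\sY)(U) = \sX(U)\wedge\sY(U)$, with unit the constant presheaf $S^0$. Symmetry, associativity, and unit axioms follow from the corresponding axioms in $\sS_\bullet$. For the internal hom, one sets $\underline{\Hom}(\sX,\sY)(U)$ to be the pointed simplicial set whose $n$-simplices are maps $\sX\wedge(\Delta^n_+\otimes U_+) \to \sY$ of pointed motivic $G$-spaces (equivalently, one uses the standard presheaf enrichment combined with the $\sS_\bullet$-tensoring below). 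The adjunction $\sX\wedge(-) \dashv \underline{\Hom}(\sX,-)$ is checked sectionwise.

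For the $\sS_\bullet$-enrichment, the tensoring $K\otimes\sX$ of a pointed simplicial set $K$ with a pointed motivic $G$-space is $(K\otimes\sX)(U) = K\wedge\sX(U)$, the cotensoring is $\sX^K(U) = \sS_\bullet(K,\sX(U))$, and the enriched mapping space $\underline{\Map}(\sX,\sY)$ is the $\sS_\bullet$-valued hom obtained by the usual end formula (which exists by bicompleteness). The required adjunction isomorphisms reduce to those in $\sS_\bullet$.

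Finally, for local finite presentability, the main point is to exhibit a small set of finitely presentable generators. Because $\Sm^G_S$ is essentially small, one may choose a set of representatives $\{X_\alpha\}$ of isomorphism classes. I would take the set
\[
\{(X_\alpha)_+ \wedge \Delta^n_+ \;\vert\; X_\alpha \in \Sm^G_S,\ n \ge 0\}
\]
as generators. Each such object is finitely presentable: by the Yoneda lemma $\Hom((X_\alpha)_+ \wedge \Delta^n_+, \sY) \cong \sY(X_\alpha)_n$ (up to a basepoint adjustment), and evaluation at $X_\alpha$ together with taking $n$-simplices commutes with filtered colimits in $\sM^G_\bullet(S)$ since these are computed sectionwise. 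That every pointed motivic $G$-space is a filtered colimit of finite colimits of such objects is the standard density statement for presheaf categories. I expect no real obstacle: the essential small\-ness of $\Sm^G_S$ is the only nontrivial input, and the rest is a routine transfer of the corresponding facts from $\sS_\bullet$ via the sectionwise formulas.
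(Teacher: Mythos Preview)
Your proposal is correct and matches the paper's approach: the paper does not give a formal proof of this lemma, but the paragraphs immediately following it spell out exactly the sectionwise smash product, sectionwise limits/colimits, the $\sS_\bullet$-tensoring/cotensoring, and then \aref{lem:finitecolimit} records the finite presentability of the objects $(U\times\Delta[n])_+$ with a reference to \cite[5.2.2b, 5.2.5]{handbook2}. Your write-up is simply a more explicit version of the same pointwise-transfer argument, relying on the essential smallness of $\Sm^G_S$ in the same way.
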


The tensor product in $\sM^G_{\bullet}(S)$ is defined by taking the pointwise, 
or schemewise, 
smash product $(\sX \wedge \sY)(U) = \sX(U) \wedge \sY(U)$. 
With this definition,
$pt = S$ is the unit of the product.
Limits and colimits in $\sM^G_{\bullet}(S)$ are also defined pointwise. 
The functor ${\rm Ev}_U$ evaluating motivic $G$-spaces at a fixed $G$-scheme $U$ is strict symmetric monoidal, 
preserves limits and colimits, 
and there is an adjunction:
\begin{equation}\label{eqn:adjunc}
\xymatrix{
{\rm Fr}_U
\colon
\sS_{\bullet}
\ar@<+.7ex>[r] &
\ar@<+.7ex>[l]
\sM^G_{\bullet}(S)
\colon 
{\rm Ev}_U.
}
\end{equation}
The left adjoint ${\rm Fr}_U$, 
defined by ${\rm Fr}_U(K) = U_{+} \wedge K$, 
is lax symmetric monoidal for any $G$-scheme and strict symmetric monoidal when $U = pt$. 
For $\sX \in \sM^G_{\bullet}(S)$ and $K \in \sS_{\bullet}$,
we define $\sX \wedge K$ and $\sX^K$ by sending $U$ to $\sX(U) \wedge K$ and $\sS_{\bullet}(K, \sX(U))$, 
respectively.  

The $\sS_{\bullet}$-enrichment of pointed motivic $G$-spaces is given degreewise by the pointed simplicial set
\begin{equation}\label{eqn:simp-str}
{\sS(\sX, \sY)}_{n} 
= 
\Hom_{\sM^G_{\bullet}(S)}(\sX \wedge \Delta[n]_{+}, \sY).
\end{equation}
The internal hom in $\sM^G_{\bullet}(S)$ is defined pointwise as $\sHom(\sX, \sY)(U) = \sS(\sX \wedge U_{+}, \sY)$. 

A pointed motivic $G$-space $\sX$ is finitely presentable if $\Hom_{\sM^G_{\bullet}(S)}(\sX, -)$ commutes with filtered colimits. 
Using the natural isomorphism $\sHom(U_{+} \wedge K, \sX) \simeq \sX(U \times -)^K$,
one deduces that $\sX$ is finitely presentable if and only if $\sS(\sX, -)$ commutes with filtered colimits. 
The pointed finite simplicial sets and the $G$-schemes form the building blocks for $\sM^G_{\bullet}(S)$ in the following sense
(see \cite[5.2.2b, 5.2.5]{handbook2}):

\begin{lem}\label{lem:finitecolimit}
Every pointed motivic $G$-space is a filtered colimit of finite colimits of pointed motivic $G$-spaces of the form $(U \times \Delta[n])_{+}$,
where $U \in \Sm^G_S$ and $\Delta[n]$ is the standard $n$-simplex for $n \ge 0$.
The motivic $G$-spaces $(U \times \Delta[n])_{+}$ are finitely presented.
The finitely presented motivic $G$-spaces are closed under retracts,
finite colimits and smash products.
\end{lem}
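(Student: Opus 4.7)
The plan is to proceed in three parts, following the standard recipe for presheaf categories, adapted to the pointed simplicial setting.

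First I would establish the presentation statement. Recall that any simplicial set $K$ is the colimit of its category of simplices, $K \simeq \colim_{\Delta[n] \to K} \Delta[n]$, and likewise any presheaf of sets on $\Sm^G_S$ is the colimit of the representables mapping into it. Combining these two density results levelwise, an arbitrary motivic $G$-space $\sX$ can be expressed as a colimit $\colim_{(U,\Delta[n]) \to \sX} U \times \Delta[n]$ indexed over its category of (scheme, simplex) elements; pointing this yields a presentation of $\sX_+$ and more generally of any pointed $\sX$ after identifying the basepoint. Now rewrite any (possibly large) colimit as a filtered colimit of its finite subdiagrams: the diagram category is the filtered union of its finite subcategories, and colimits commute with this union. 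This exhibits every pointed motivic $G$-space as a filtered colimit of finite colimits of objects of the form $(U \times \Delta[n])_+$.

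Second I would verify that $(U \times \Delta[n])_+$ is finitely presentable. Using the adjunction $(U \times \Delta[n])_+ \simeq U_+ \wedge \Delta[n]_+ \simeq \mathrm{Fr}_U(\Delta[n]_+)$ together with \eqref{eqn:adjunc} and \eqref{eqn:simp-str}, one computes
\[
\Hom_{\sM^G_\bullet(S)}\bigl((U \times \Delta[n])_+,\sX\bigr) \;\simeq\; \Hom_{\sS_\bullet}\bigl(\Delta[n]_+,\sX(U)\bigr) \;\simeq\; \sX(U)_n.
\]
Evaluation at $U$ commutes with all colimits (since colimits in $\sM^G_\bullet(S)$ are pointwise), and the finite simplicial set $\Delta[n]_+$ is finitely presentable in $\sS_\bullet$. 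Therefore $\Hom_{\sM^G_\bullet(S)}((U\times\Delta[n])_+,-)$ commutes with filtered colimits, which is the definition.

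Finally I would check the closure properties. Closure under retracts is a formal consequence of the definition of finitely presentable object (a retract of an object with a filtered-colimit-preserving hom-functor again has such a hom-functor). Closure under finite colimits uses that finite limits commute with filtered colimits in $\Sets$, so if $\sX = \colim_i \sX_i$ is a finite colimit of finitely presentable objects, then $\Hom(\sX,-) = \lim_i \Hom(\sX_i,-)$ is a finite limit of functors preserving filtered colimits, hence preserves them. For smash products, one reduces via the presentation in the first step to the representable case: $(U \times \Delta[m])_+ \wedge (V \times \Delta[n])_+ \simeq (U \times V \times \Delta[m] \times \Delta[n])_+$, which is finitely presentable by the computation above (after further writing $\Delta[m]\times\Delta[n]$ as a finite colimit of simplices); general finitely presentable objects are handled by commuting the smash product past the finite colimits, using that $- \wedge \sY$ is a left adjoint.

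I expect none of the steps to present a genuine obstacle; the only mild care needed is to keep track of basepoints when passing from the density presentation of unpointed presheaves to a presentation of pointed motivic $G$-spaces, and to invoke \aref{lem:elementary} to justify that filtered colimits commute with finite limits in $\sM^G_\bullet(S)$ at the appropriate spots.
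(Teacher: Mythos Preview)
Your argument is correct and follows the standard route for locally finitely presentable categories. The paper does not actually supply a proof of this lemma: it merely cites \cite[5.2.2b, 5.2.5]{handbook2} in the sentence preceding the statement and leaves it at that. Your sketch spells out precisely the argument one finds in such a reference, so there is nothing to compare beyond noting that you have filled in what the paper omits.
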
  

In the above we described the monoidal structure on pointed motivic $G$-spaces. 
This story works verbatim for motivic $G$-spaces $\sM^G(S)$ by replacing the smash product with the product $\sX \times \sY$.

\subsection{Global model structures}\label{subsection:SMS}
We recall the standard global model structures which we later localize to obtain motivic model structures on motivic $G$-spaces.
We refer the reader to \cite{Hirsc} for standard notions related to model structures. 
Recall that a model structure on $\sM^G(S)$ is {\sl simplicial} if the simplicial structure interacts with cofibrations, 
fibrations and weak equivalences as follows: 
If $i :\sX \to \sY$ is a cofibration and $p : \sZ \to \sW$ a fibration in $\sM^G(S)$, 
then the map of simplicial sets
\[
\sS(\sY, \sZ) \xrightarrow{(i^*, p_*)}
\sS(\sX,\sZ) {\underset{\sS(\sX,\sW)}\times} \
\sS(\sY, \sW)
\]
is a Kan fibration, 
which is a weak equivalence if either $i$ or $p$ is a weak equivalence.

We  say that a map $f : \sX \to \sY$ of motivic $G$-spaces is a 
{\sl schemewise weak equivalence} (resp.~{\sl schemewise fibration}) 
if the map of simplicial sets $\sX(X) \to \sY(X)$ is a weak equivalence (resp.~Kan fibration) of simplicial sets for every $X \in \Sm^G_S$. A schemewise fibration will be more frequently called a \textsl{projective fibration}.
A \textsl{projective cofibration} is a map
$f$ which has the left lifting property with respect to all maps which are schemewise fibrations and weak equivalences. 
Using \cite[Theorems~11.6.1, 11.7.3, 13.1.14, Proposition~12.1.5]{Hirsc}, 
one deduces the existence and standard properties of the {\sl projective model structure}.

\begin{thm}[Projective model structure]\label{thm:PMS-Psh} 
The schemewise weak equivalences, projective fibrations,
and projective cofibrations form a cellular, combinatorial and simplicial model structure on $\sM^G(S)$ with respect to the $\sS$-enrichment 
in~\eqref{eqn:simp-str}.

The set of generating cofibrations
\[
I^{\rm sch}_{\rm proj}(\Sm^G_S) 
= 
\{U \times (\partial \Delta^n \subset\Delta^n)\}_{n \ge 0, U \in \Sm^G_S}
\]
and trivial cofibrations
\[
J^{\rm sch}_{\rm proj}(\Sm^G_S) 
= 
\{U \times (\Lambda^n_i \subset\Delta^n)\}_{n \ge 1, 0 \le i \le n, U \in \Sm^G_S}
\]
are induced from the corresponding maps in $\sS$. 
The domains and codomains of the maps in these generating sets are finitely presented.
The projective model structure is proper. 
For every $U \in \Sm^G_S$ the pair $({\rm Fr}_U, {\rm Ev}_U)$ forms a Quillen pair.
\end{thm}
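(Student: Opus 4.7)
The plan is to obtain the projective model structure on $\sM^G(S)$ by transfer from the standard Kan--Quillen model structure on $\sS$ along the family of adjunctions $({\rm Fr}_U, {\rm Ev}_U)$ from~\eqref{eqn:adjunc}, as $U$ ranges over $\Sm^G_S$. Since $\Sm^G_S$ is essentially small, $\sM^G(S)$ is (equivalent to) a presheaf category on a small category, hence locally presentable and bicomplete, so the usual machinery of cofibrantly generated model categories is available.

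First, I would take $I^{\rm sch}_{\rm proj}(\Sm^G_S)$ and $J^{\rm sch}_{\rm proj}(\Sm^G_S)$ as the candidate generating (trivial) cofibrations. By adjunction a map $p$ has the right lifting property with respect to $I^{\rm sch}_{\rm proj}$ (respectively $J^{\rm sch}_{\rm proj}$) if and only if every ${\rm Ev}_U(p)$ has the right lifting property with respect to $\partial\Delta^n\hookrightarrow \Delta^n$ (respectively $\Lambda^n_i\hookrightarrow \Delta^n$), i.e. if and only if $p$ is a schemewise trivial fibration (respectively a schemewise fibration). Smallness of the domains is immediate: $\Delta^n$, $\partial\Delta^n$, and $\Lambda^n_i$ are finite simplicial sets, and by \aref{lem:finitecolimit} the motivic $G$-spaces $(U\times\Delta[n])_+$ are finitely presented; in particular filtered colimits commute with $\Hom$ out of them, so Quillen's small object argument applies. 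Invoking the cofibrant-generation recognition theorem \cite[Theorem~11.3.1]{Hirsc} (which becomes \cite[Theorems~11.6.1, 13.1.14]{Hirsc} in the combinatorial/cellular setting) then produces a cofibrantly generated model structure with the claimed classes; the two-out-of-three and retract axioms for schemewise weak equivalences are inherited pointwise from $\sS$.

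Next I would verify the additional properties listed in the theorem. Combinatoriality follows from cofibrant generation together with local presentability of $\sM^G(S)$. Cellularity is obtained by checking Hirschhorn's three conditions (effective monomorphism codomains, smallness of domains, cofibrations as effective monomorphisms) pointwise, which reduce to the corresponding facts for $\sS$. The simplicial enrichment compatibility (the pushout-product axiom for the $\sS$-tensoring from~\eqref{eqn:simp-str}) reduces via adjunction and the pointwise definition of the tensor to the analogous statement in $\sS$, applied degreewise, since $I^{\rm sch}_{\rm proj}$ and $J^{\rm sch}_{\rm proj}$ are obtained from the generating (trivial) cofibrations of $\sS$ by applying the left adjoints ${\rm Fr}_U$. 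Properness is deduced from properness of $\sS$ via \cite[Proposition~12.1.5]{Hirsc}: weak equivalences and fibrations are defined schemewise, projective cofibrations are in particular schemewise cofibrations (monomorphisms), and pullbacks and pushouts in $\sM^G(S)$ are computed pointwise. The Quillen-pair assertion is built into the construction, since ${\rm Fr}_U$ sends the generating (trivial) cofibrations of $\sS$ into the generating (trivial) cofibrations of $\sM^G(S)$.

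The work is essentially verification rather than invention, since every piece reduces pointwise to a known property of $\sS$. The only step that requires genuine attention is the pushout-product axiom for the simplicial enrichment: one must confirm that ${\rm Fr}_U(K)\wedge L \cong {\rm Fr}_U(K\wedge L)$ (or the unpointed analogue) so that the pushout-product of two generating projective cofibrations is again a projective cofibration, and similarly for the trivial case. This is straightforward from the pointwise definition of the tensoring, and once it is in hand the entire theorem follows from Hirschhorn's cited results.
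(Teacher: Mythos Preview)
Your proposal is correct and follows essentially the same approach as the paper, which simply cites \cite[Theorems~11.6.1, 11.7.3, 13.1.14, Proposition~12.1.5]{Hirsc} for the existence and standard properties of the projective model structure on a diagram category. You have unpacked these citations into the underlying transfer argument along the $({\rm Fr}_U,{\rm Ev}_U)$ adjunctions, which is precisely how Hirschhorn's Theorem~11.6.1 is proved; one small slip is that the statement concerns the unpointed category $\sM^G(S)$, so your formulas should use $\times$ rather than $\wedge$, but this does not affect the argument.
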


An \textsl{injective cofibration} is a schemewise cofibration.
Let $\kappa$ be the first cardinal number greater than the cardinality of the set of maps in the category of presheaves on $\Sm^G_S$. 
If $\omega$ denotes, 
as usual, 
the cardinal of continuum, 
we define $\gamma$ as ${\kappa \omega}^{\kappa \omega}$. 
Now let $I^{\rm sch, \kappa}_{\rm inj}(\Sm^G_S)$ be the set of maps $\sX \to \sY$ such that $\sX(U) \to \sY(U)$ is a cofibration of simplicial sets 
of cardinality less than $\kappa$ for every $U \in \Sm^G_S$. 
Likewise, 
we define $J^{\rm sch, \gamma}_{\rm inj}(\Sm^G_S)$ for schemewise trivial cofibrations of simplicial sets bounded by $\gamma$. 
The {\sl injective model structure} on $\sM^G(S)$, defined in \cite{Heller}, has the following properties, see e.g., \cite[Theorem 1.4]{Hornbostel}, \cite[Theorem 2.16]{Barwick}, or \cite[Proposition A.3.3.2]{Lurie}.

\begin{thm}[Injective model structure]\label{thm:LMS-Psh} 
The  schemewise weak equivalences, injective cofibrations, and injective fibrations form a  cellular, combinatorial and simplicial model structure on $\sM^G(S)$ with respect to the $\sS$-enrichment 
in~\eqref{eqn:simp-str}.
The cofibrations and trivial cofibrations are generated by $I^{\rm sch, \kappa}_{\rm inj}(\Sm^G_S)$ and $J^{\rm sch, \gamma}_{\rm inj}(\Sm^G_S)$,
respectively. 
\end{thm}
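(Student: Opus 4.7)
The plan is to adapt the standard construction of the Jardine/Heller injective model structure on simplicial presheaves to this equivariant setting, following the approach indicated by \cite{Heller}, \cite{Hornbostel}, \cite{Barwick}, and \cite{Lurie}. Two-out-of-three, retract closure, and stability of (trivial) cofibrations under pushout are inherited directly from $\sS$, since schemewise weak equivalences and schemewise cofibrations are detected under evaluation at each $U\in\Sm^G_S$ and colimits in $\sM^G(S)$ are computed schemewise. The bulk of the work lies in producing the two functorial factorizations and in identifying a \emph{set} (not a proper class) of generating trivial cofibrations.

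The first technical input is the bounded cofibration property. Because $\Sm^G_S$ is essentially small, $\sM^G(S)$ is locally $\kappa$-presentable for any sufficiently large regular cardinal $\kappa$; with the explicit $\kappa$ of the statement, any schemewise cofibration $f\colon\sX\to\sY$ together with a subobject $\sX'\subseteq\sY$ of cardinality less than $\kappa$ embeds into a subobject $\sY'\subseteq\sY$ of cardinality less than $\kappa$ for which $\sX\cap\sY'\hookrightarrow\sY'$ remains a schemewise cofibration. A transfinite small object argument applied to $I^{\rm sch,\kappa}_{\rm inj}(\Sm^G_S)$ then produces the (cofibration, trivial fibration) factorization, and a retract-of-cell-complex argument identifies this set as cofibrantly generating the class of injective cofibrations.

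The hard step, which I expect to be the main obstacle, is the analogous bounded trivial cofibration property needed for the (trivial cofibration, fibration) factorization: every schemewise trivial cofibration must be a filtered colimit of schemewise trivial cofibrations of cardinality less than $\gamma$. Following Jardine's original argument, one proceeds by transfinite enlargement of a bounded subobject, alternating the bounded cofibration property above with a horn-filling and path-object correction step that preserves the trivial cofibration property; a cardinal computation shows the process stabilizes below $\gamma=(\kappa\omega)^{\kappa\omega}$. Since both schemewise monomorphisms and schemewise weak equivalences depend only on the underlying simplicial sets of sections, the $G$-action does not intervene and the classical argument carries over verbatim. A second small object argument with $J^{\rm sch,\gamma}_{\rm inj}(\Sm^G_S)$ then produces the required factorization.

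The remaining structural properties are routine. Properness on both sides follows from that of $\sS$, since everything is computed schemewise. The pushout-product axiom with respect to the $\sS$-enrichment \eqref{eqn:simp-str} reduces to the corresponding fact in $\sS$. Cellularity in the sense of \cite{Hirsc} amounts to effective monomorphism and compactness conditions on the generating sets, both automatic in presheaf categories of simplicial sets. Combinatoriality follows from local presentability combined with the existence of the small generating sets above.
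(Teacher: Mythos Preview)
Your sketch is correct and follows the classical Jardine--Heller approach, but you should be aware that the paper supplies no proof of its own here: the theorem is stated with a pointer to \cite{Heller}, \cite[Theorem~1.4]{Hornbostel}, \cite[Theorem~2.16]{Barwick}, and \cite[Proposition~A.3.3.2]{Lurie}, and the argument is taken as known. What you have written is essentially an outline of the content of those references, specialized to the site $\Sm^G_S$; your observation that ``the $G$-action does not intervene'' is exactly the point---$\sM^G(S)$ is nothing more than simplicial presheaves on an essentially small category, so the general injective model structure on simplicial presheaves applies verbatim, and the equivariance is invisible at this stage.

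A minor remark: you prove properness, which the theorem as stated does not assert (it is used later, in \autoref{thm:LFMS-Psh}, but for the \emph{local} model structures). Also, the cellularity claim is slightly more delicate than ``automatic'': one must check Hirschhorn's compactness condition for the domains of the generators relative to the class of all monomorphisms, not just smallness; this does hold in presheaf categories of simplicial sets, but it is worth a sentence rather than a parenthetical.
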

 
The third model structure we consider is the flasque model structure \cite{Isaksen:flasque}. 
For $U \in \Sm^G_S$, 
consider a finite set of equivariant monomorphisms $V_I = \{V_i \to U\}_{i \in I}$. 
The categorical union $\cup_{i\in I} V_i$ is the coequalizer of the diagram in $\sM^G(S)$
 
$$
\coprod_{i,j\in I}  V_i \times_{U} V_j
\twoarrow
\coprod_{i\in I}  V_i.
$$

Let $i_I$ denote the induced monomorphism ${\cup_{i\in I}} V_i \to U$. 
Note that $\0 \to U$ arises in this way. 
The pushout product of maps of $i_I$ and a map between simplicial sets exists in $\sM^G(S)$. 
In particular, 
we are entitled to form the sets
\[
I^{\rm sch}_{\rm fl}(\Sm^G_S) 
= 
\{i_I  \ \square \ (\partial \Delta^n \subset \Delta^n)\}_{I, n \ge 0}
\]
and
\[
J^{\rm sch}_{\rm fl}(\Sm^G_S) 
= 
\{i_I \ \square \ (\Lambda^n_i \subset \Delta^n)\}_{I, n \ge 1, 0 \le i \le n}.
\]

A map between motivic $G$-spaces is a flasque fibration if it has the right lifting property with respect to $J^{\rm sch}_{\rm fl}(\Sm^G_S)$.
Moreover,
a flasque cofibration is a map having the left lifting property with respect to every trivial flasque fibration. The flasque model structure satisfies the following properties, see \cite{Isaksen:flasque}.

\begin{thm}[Flasque model structure]\label{thm:PMS-Flasque}
The schemewise weak equivalences, 
flasque cofibrations and fibrations form 
form a cellular, combinatorial and simplicial model structure on $\sM^G(S)$ with respect to the $\sS$-enrichment 
in~\eqref{eqn:simp-str}.
The flasque cofibrations and fibrations are generated by $I^{\rm sch}_{\rm fl}(\Sm^G_S)$ and $J^{\rm sch}_{\rm fl}(\Sm^G_S)$, 
respectively.
\end{thm}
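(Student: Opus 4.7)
The plan is to apply Isaksen's general construction of the flasque model structure on simplicial presheaves over any essentially small category \cite{Isaksen:flasque}, instantiated with the essentially small category $\Sm^G_S$ as the underlying base. The principal things to verify are that (i) the sets $I^{\rm sch}_{\rm fl}(\Sm^G_S)$ and $J^{\rm sch}_{\rm fl}(\Sm^G_S)$ generate a cofibrantly generated model structure whose weak equivalences are the schemewise weak equivalences, and (ii) this model structure is additionally simplicial, combinatorial, and cellular. The strategy is to apply Kan's recognition theorem \cite[Theorem~11.3.1]{Hirsc}, which reduces the problem to a small-object argument together with a check that relative $J^{\rm sch}_{\rm fl}$-cell complexes are schemewise trivial cofibrations.

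First I would verify that the small object argument applies to both generating sets. For a finite family of monomorphisms $\{V_i \to U\}_{i\in I}$ in $\Sm^G_S$, the categorical union $\cup_{i\in I} V_i$ is a finite colimit of representables, hence a finitely presented motivic $G$-space by \aref{lem:finitecolimit}. Since finite simplicial sets are finite colimits of standard simplices, the pushout-product constructions $i_I \square (\partial\Delta^n \subset \Delta^n)$ and $i_I \square (\Lambda^n_i \subset \Delta^n)$ again yield morphisms between finitely presented motivic $G$-spaces. Hence the Quillen small object argument provides functorial factorizations with respect to each generating set.

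The main step is to identify the acyclic part. A map has the right lifting property with respect to all of $I^{\rm sch}_{\rm fl}(\Sm^G_S)$ if and only if it is a schemewise trivial Kan fibration; this follows by evaluating at each $U \in \Sm^G_S$, using that $i_I(U)\colon (\cup_{i\in I} V_i)(U) \to U(U)$ is a monomorphism of discrete simplicial sets and applying the pushout-product axiom in $\sS$. Conversely, any relative $J^{\rm sch}_{\rm fl}$-cell complex is a schemewise trivial cofibration, since each generating trivial cofibration becomes one schemewise, and schemewise trivial cofibrations are closed under pushouts and transfinite composition. The expected technical obstacle is precisely this closure statement combined with matching the two halves of the factorization: showing that a map is a flasque fibration and a weak equivalence exactly when it is $I^{\rm sch}_{\rm fl}$-injective. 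With this in hand Kan's theorem produces the model structure.

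Finally, I would check the remaining adjectives. For the $\sS$-enrichment via~\eqref{eqn:simp-str}, the pushout-product axiom (SM7) follows because the pushout product of a generating flasque cofibration $i_I \square(\partial\Delta^n\subset\Delta^n)$ with a monomorphism $K\hookrightarrow L$ of simplicial sets is again of the form generating flasque cofibrations, and triviality is preserved by the classical simplicial pushout-product axiom. Combinatoriality follows from cofibrant generation by sets of morphisms between finitely presentable objects, and cellularity is verified as in the projective case of \aref{thm:PMS-Psh}: flasque cofibrations are monomorphisms, hence effective monomorphisms in the presheaf category, and the domains and codomains of generators are compact and small relative to the subcategory of cofibrations.
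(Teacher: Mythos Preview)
Your proposal is correct and follows precisely the route the paper defers to: the paper gives no argument of its own but simply cites \cite{Isaksen:flasque}, and what you have sketched is the content of Isaksen's proof (Kan's recognition theorem, smallness via finite presentability of the domains and codomains, and the identification of $I^{\rm sch}_{\rm fl}$-injectives with schemewise trivial Kan fibrations). One small point worth tightening: the claim that $I^{\rm sch}_{\rm fl}$-injectives coincide with schemewise trivial Kan fibrations does not follow by ``evaluating at each $U$'' and solving lifts objectwise (compatibility of the lifts is not automatic); the clean argument is that the maps in $I^{\rm sch}_{\rm fl}$ are objectwise monomorphisms, hence injective cofibrations, while schemewise trivial Kan fibrations are exactly the injective trivial fibrations and therefore have the right lifting property against all monomorphisms---conversely, taking the empty family $I=\emptyset$ recovers the projective generating cofibrations, so $I^{\rm sch}_{\rm fl}$-injectives are schemewise trivial Kan fibrations.
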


Finally we note the following.
\begin{theorem}\label{thm:globaleq}
 The identity functor is a left Quillen equivalence from the global projective to the global flasque and a left Quillen equivalence from the global flasque to the global injective model structures.
\end{theorem}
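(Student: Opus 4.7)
The plan is to exploit the fact that all three model structures share exactly the same class of weak equivalences, namely the schemewise weak equivalences of \aref{thm:PMS-Psh}, \aref{thm:LMS-Psh}, and \aref{thm:PMS-Flasque}. Once we verify that the identity functor from the left-hand model structure to the right-hand one sends (trivial) cofibrations to (trivial) cofibrations, we immediately get a Quillen adjunction $(\id,\id)$; and since the weak equivalences are the same on both sides, the derived unit and counit are trivially isomorphisms in each homotopy category, so the Quillen pair is automatically a Quillen equivalence.

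To verify the cofibration inclusions, first I would handle projective $\Rightarrow$ flasque. The generating projective cofibrations are of the form $U\times (\partial\Delta^n\subset\Delta^n)$ for $U\in\Sm^G_S$. Taking the empty family of monomorphisms $V_I=\emptyset$ into $U$, the categorical union is $\emptyset$ and the associated inclusion $i_I\colon\emptyset\to U$ is in the indexing data for $I^{\rm sch}_{\rm fl}(\Sm^G_S)$; the pushout product $i_I\square(\partial\Delta^n\subset\Delta^n)$ is then precisely $U\times(\partial\Delta^n\subset\Delta^n)$. Hence every generating projective cofibration is a generating flasque cofibration, and by closure under the saturated class operations used to build the cofibrations, every projective cofibration is a flasque cofibration.

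Next I would handle flasque $\Rightarrow$ injective. An injective cofibration is exactly a schemewise monomorphism of simplicial presheaves. Each generating flasque cofibration $i_I\square(\partial\Delta^n\subset\Delta^n)$ (respectively $i_I\square(\Lambda^n_i\subset\Delta^n)$) is a pushout product of monomorphisms of simplicial presheaves and is thus itself a monomorphism of simplicial presheaves; evaluating at any $U\in\Sm^G_S$ yields a monomorphism of simplicial sets, i.e.\ a cofibration of simplicial sets. Since schemewise cofibrations are closed under pushouts, transfinite compositions, and retracts, every flasque cofibration is an injective cofibration.

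Because the weak equivalences coincide in all three cases, the images of trivial cofibrations under the identity are automatically trivial cofibrations on the target side, so no separate verification is needed there. The remaining piece is the Quillen equivalence statement itself: given that both adjoints are the identity and that the classes of weak equivalences agree, a map $\sX\to\sY$ between cofibrant objects on the source side is a weak equivalence if and only if its image on the target side is, which is the defining condition for a Quillen equivalence. I expect no serious obstacle here; the only technical point worth being careful about is recognizing that the $I=\emptyset$ case of the flasque generators reproduces the projective generators, which is what pins down the first inclusion cleanly.
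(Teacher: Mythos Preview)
Your proof is correct. The paper actually states this theorem without proof, treating it as a standard fact about these model structures, so there is nothing to compare against; your argument supplies exactly the details one would expect.

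A few minor remarks for polish. First, your observation that the case $I=\emptyset$ of the flasque generators recovers the projective generators is exactly what the paper itself notes just before defining $I^{\rm sch}_{\rm fl}(\Sm^G_S)$ (``Note that $\emptyset\to U$ arises in this way''), so you are on solid ground there. Second, for the flasque-to-injective step, you might mention explicitly that the pushout product of two monomorphisms in a presheaf category is again a monomorphism because this holds objectwise in $\sS$; this is the content of your parenthetical but it is worth stating as the reason. Finally, the Quillen equivalence conclusion can be stated even more directly: since the identity is both the left and right adjoint and the weak equivalences literally coincide, the induced functor on homotopy categories is the identity, hence an equivalence. No cofibrancy or fibrancy replacement subtleties arise.
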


\subsection{Local model structures}\label{subsection:LPMS}
Next we introduce the local model structures, which take into account the equivariant Nisnevich topology. Local equivalences for presheaves of simplicial sets
on a Grothendieck site are defined via sheaves of homotopy groups, 
see \cite{Jardine:spre}. As shown in \cite{DHI} this approach is equivalent to a Bousfield localization at the class of hypercovers. Furthermore, by \cite{V:cd}, when the topology is defined via a $cd$-structure it suffices to localize with respect to the distinguished squares.

Let  $\sM$ be a simplicial model category and $\Sigma$ a class of morphisms. 
Recall from \cite[Chapter~3]{Hirsc} that 
an object $\sZ$ of $\sM$ is called \textsl{$\Sigma$-local} 
if it is fibrant and for every element $f : \sX \to \sY$ in $\Sigma$, 
the induced map of simplicial function complexes 
$\sS(\sY^{\cof}, \sZ) \to \sS(\sX^{\cof}, \sZ)$ is a weak equivalence (see \cite[Definitions~3.1.4, 17.1.1]{Hirsc}), where $(-)^{\cof}$ denotes cofibrant replacement.
Moreover, 
a map $f : \sX \to \sY$ in $\sM$ is a $\Sigma$-local equivalence if for every $\Sigma$-local object $\sZ$, 
the induced map of homotopy function complexes 
$\sS(\sY^{\cof}, \sZ) \to \sS(\sX^{\cof}, \sZ)$ is a weak equivalence. 
Clearly every element of $\Sigma$ is a $\Sigma$-local equivalence.

The {\sl left Bousfield localization} of $\sM$ with respect to the class $\Sigma$ of morphisms is a model category structure $L_{\Sigma}\sM$ 
on the underlying category $\sM$ with  
\begin{enumerate}
\item
weak equivalences the $\Sigma$-local equivalences of $\sM$, 
\item
cofibrations are the same as the cofibrations of $\sM$, and
\item
fibrations the maps having the right lifting property with respect to the cofibrations that are simultaneously $\Sigma$-local equivalences.
\end{enumerate}

If $\Sigma$ is a set of maps in a left proper and cellular \
(or combinatorial) model category $\sM$, then by \cite[Theorem 4.1.1]{Hirsc} (or J. Smith's theorem \cite[Theorem 4.7]{Barwick}) 
the left Bousfield localization $L_{\Sigma}\sM$ exists. 
Moreover, 
\begin{enumerate}
 \item[(4)] the fibrant objects of $L_{\Sigma}\sM$ are the $\Sigma$-local objects.
\end{enumerate}

Now we localize the global model structures. 
For a distinguished equivariant Nisnevich square $Q$ as in ~\eqref{eqn:cd-square} write $Q^{hp}$ for the homotopy pushout in the global projective model structure. 
There is a canonical map $Q^{hp} \to X$ and we set 
\begin{equation*}
\Sigma^{hp}_{\rm Nis} = \{Q^{hp} \to X\}_Q \cup \{\emptyset \to h_{\emptyset}\}.
\end{equation*}

Here $\emptyset$ is the initial motivic $G$-space and $h_{\emptyset}$ is the motivic $G$-space represented by the empty $G$-scheme.

\begin{definition}
 The {\sl local projective} (resp.~ {\sl local flasque}, resp.~ {\sl local injective} model structure on $\sM^G(S)$ is the left Bousfield 
 localization at $\Sigma^{hp}_{\rm Nis}$ of the global projective (resp.~ global flasque, resp.~ global injective) model structure. 
\end{definition}

In the case of the local flasque and the local injective model structures, instead
of using the homotopy pushout $Q^{hp}$ of a distinguished square, one could use instead
the pushout (computed in $\sM^G(S)$). The point is that a monomorphism of smooth
$G$-schemes is a cofibration in these model structures, and so the categorical pushout
is weakly equivalent to a homotopy pushout in these model structures. However, to
keep the treatment uniform, we use the homotopy pushout in all three cases.

Recall \cite{Jardine:spre} that a map $f:\sX\to \sY$ is a {\sl local equivalence} if it induces an isomorphisms  $(\pi_{0}\sX)_{GNis}\iso (\pi_{0}\sY)_{GNis}$ and $(\pi_{n}(\sX|_{U},x))_{GNis}\iso (\pi_{n}(\sY|_{U}, f(x)))_{GNis}$ of sheaves for every $U$ in $\Sm^{G}_{S}$ and basepoints $x\in \sX(U)$ and $n\geq 0$.

\begin{thm}\label{thm:LFMS-Psh}
The local projective, local flasque, and local injective model structures on $\sM^{G}(S)$ are cellular, combinatorial, proper, and simplicial. 
The identity functors from the local projective model structure to the local flasque and local injective model structures are left Quillen equivalences. Moreover, the weak equivalences in all three model structures agree and are exactly the local weak equivalences.
\end{thm}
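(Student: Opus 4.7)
The plan is to deduce the existence and formal properties of the three local structures from the Bousfield localization of their global counterparts, and then identify the $\Sigma^{hp}_{\rm Nis}$-local equivalences with Jardine's local weak equivalences via the $cd$-structure machinery of \cite{V:cd} combined with \aref{thm:Comparison}.

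First, the three global model structures of \aref{thm:PMS-Psh}, \aref{thm:PMS-Flasque} and \aref{thm:LMS-Psh} are left proper, cellular, combinatorial and simplicial, and $\Sigma^{hp}_{\rm Nis}$ is a set of morphisms. Hence Hirschhorn's localization theorem \cite[Theorem~4.1.1]{Hirsc}, together with J.~Smith's theorem for the combinatorial part \cite[Theorem~4.7]{Barwick}, produces the three left Bousfield localizations, which are automatically left proper, cellular, combinatorial and simplicial. In each case the cofibrations coincide with those of the ambient global structure, and the weak equivalences are by construction the $\Sigma^{hp}_{\rm Nis}$-local equivalences.

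Next, since \aref{thm:globaleq} gives Quillen equivalences between the three global structures, the simplicial mapping spaces used to define $\Sigma^{hp}_{\rm Nis}$-local objects agree up to weak equivalence across the three choices of cofibrant replacement. Consequently the classes of $\Sigma^{hp}_{\rm Nis}$-local objects, and therefore of $\Sigma^{hp}_{\rm Nis}$-local equivalences, do not depend on which global structure one starts with. Because projective cofibrations are contained in flasque cofibrations which are contained in injective cofibrations, the identity functors between the local structures are left Quillen, and being bijective on local weak equivalences they are left Quillen equivalences. This yields the statement that the three classes of weak equivalences coincide and that the comparison functors are Quillen equivalences.

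Finally, I would identify the $\Sigma^{hp}_{\rm Nis}$-local equivalences with Jardine's local weak equivalences and deduce right properness. By \aref{thm:Comparison} the equivariant Nisnevich $cd$-structure is complete, regular and bounded, so Voevodsky's results \cite[Corollary~5.10 and the surrounding discussion]{V:cd} imply that a globally fibrant presheaf is $\Sigma^{hp}_{\rm Nis}$-local if and only if it sends every distinguished equivariant Nisnevich square to a homotopy cartesian square of simplicial sets and sends $\emptyset$ to a point. Combined with the hypercover characterization of local fibrancy in \cite{DHI}, this shows that the $\Sigma^{hp}_{\rm Nis}$-local fibrant objects are exactly the Jardine-local fibrant objects, and hence the two classes of weak equivalences agree. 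The main obstacle is right properness, which does not follow formally from Bousfield localization. I would derive it from the boundedness of the $cd$-structure: the finite cohomological dimension bound of \aref{cor:Nis-Dim} forces the descent spectral sequence to converge, and the standard argument (as in \cite[\S2.1]{MV} or \cite[\S5]{V:cd}) then shows that pullbacks along global fibrations preserve local weak equivalences. This is the $cd$-theoretic substitute for the stalkwise argument, which in the generality of a flat algebraic group scheme is not at our disposal since a conservative family of points is only known in the finite-group case by \aref{prop:Point-Cons}.
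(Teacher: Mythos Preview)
Your proposal is correct and follows essentially the same route as the paper: existence via Bousfield localization of the global structures, Quillen equivalences inherited from \aref{thm:globaleq}, and identification of the local weak equivalences via the $cd$-structure results of \cite{V:cd}. The only notable difference is in the handling of right properness: the paper simply invokes \cite[Theorem~1.5]{Blander} for the local projective case and then observes that local flasque and local injective fibrations are in particular local projective fibrations, whereas you sketch the underlying cohomological-dimension/descent argument directly. Both amount to the same mechanism (Blander's proof is itself an application of the bounded $cd$-structure machinery), so your version is a more explicit unpacking of what the paper cites; the paper's reduction to the projective case is a slightly cleaner way to handle the flasque and injective structures in one stroke.
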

\begin{proof}
The schemewise model structures are cellular, combinatorial and left proper ones,
and hence the Bousfield localizations defining these model structures exist, are cellular, combinatorial, left proper, and simplicial (see \cite[Theorem~4.1.1]{Hirsc} or \cite[Theorem 4.7]{Barwick}).
The identity functor induces a left Quillen equivalence from the local projective to the local flasque to the local injective model structures because it does so on global model structures (\aref{thm:globaleq}). 

Right properness of the local projective model structure
follows as in \cite[Theorem~1.5]{Blander}.
The other model structure are also right proper because
 a local injective fibration or a local flasque fibration, 
is also a local projective fibration.

By \cite[Theorem~3.8]{V:cd}, the weak equivalences in the local projective model structure are exactly the local equivalences. It is straightforward to verify that local flasque and local injective equivalences are also exactly the local equivalences (see e.g., \cite[Theorem 4.3]{Isaksen:flasque}).
\end{proof}

A presheaf of simplicial sets 
$\sF$ on a Grothendieck site $\mcal{C}$ is said to satisfy \textsl{$\tau$-descent} if every hypercover $U_{\bullet}\to U$ induces a weak equivalence
$\sF(U) \simeq \holim_{\Delta}\sF(U_{n})$. Equivalently every fibrant replacement $\sF \to \wh{\sF}$, in the local model structure, is an objectwise equivalence. 
An important feature of topologies defined via $cd$-structures is that descent is equivalent to a substantially simpler excision property.

\begin{definition}
 A motivic $G$-space $\sX$ is said to be \textsl{equivariant Nisnevich excisive} provided
 \begin{enumerate}
 \item[(i)] $\sX(\emptyset)$ is contractible, and 
 \item[(ii)] the square is homotopy cartesian
 \begin{equation*}
\xymatrix{
\sX(X)  \ar[r] \ar[d] & \sX(A) \ar[d] \\
\sX(Y) \ar[r] & X(B)} 
\end{equation*}
for every distinguished equivariant Nisnevich square~\eqref{eqn:cd-square} in $\Sm_{S}^{G}$.
\end{enumerate}
\end{definition}

\begin{remark}
A motivic $G$-space $\sX$ is locally projective (resp.~ flasque, resp.~ injective)
fibrant if and only if it is globally projective (resp.~ flasque, resp.~ injective) fibrant and is equivariant Nisnevich excisive. 
\end{remark}

\begin{prop}\label{prop:Flasq-fib}
Let $\sX$ be a motivic $G$-space. The following are equivalent.
\begin{enumerate}
\item[(i)] $\sX$ is equivariant Nisnevich excisive,
\item[(ii)] any local fibrant replacement $\sX \to \wh{\sX}$  is a schemewise weak equivalence. 
\end{enumerate}
\end{prop}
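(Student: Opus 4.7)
The strategy is to leverage the Remark immediately preceding the statement, which identifies locally fibrant motivic $G$-spaces as exactly the globally fibrant excisive ones, together with the standard uniqueness of fibrant replacements in a left Bousfield localization. For (ii) $\Rightarrow$ (i), suppose $\sX \to \wh{\sX}$ is a schemewise weak equivalence with $\wh{\sX}$ a local fibrant replacement, hence locally fibrant. By the Remark, $\wh{\sX}$ is equivariant Nisnevich excisive. Since both the contractibility of $\wh{\sX}(\emptyset)$ and the homotopy cartesianness of the squares of simplicial sets appearing in the excision axiom are preserved under pointwise weak equivalences, $\sX$ inherits excision from $\wh{\sX}$.

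For (i) $\Rightarrow$ (ii), the plan is to construct a particular local fibrant replacement that is manifestly a schemewise equivalence, and then transfer this property to any other local fibrant replacement. Given excisive $\sX$, first factor $\sX \to \ast$ in the global projective model structure of \aref{thm:PMS-Psh} to obtain a schemewise weak equivalence $\sX \to \sX'$ with $\sX'$ globally fibrant. As above, schemewise equivalences preserve excision, so $\sX'$ is again excisive; by the Remark $\sX'$ is therefore locally fibrant, so $\sX \to \sX'$ is itself a local fibrant replacement that is schemewise. For an arbitrary local fibrant replacement $\sX \to \wh{\sX}$, standard lifting in the local model structure produces a map $\sX' \to \wh{\sX}$ making the obvious triangle homotopy commute; this map is a local weak equivalence between locally fibrant objects.

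The main obstacle, and the only nonformal input, is the passage from \emph{local equivalence between locally fibrant objects} to \emph{schemewise equivalence}. This is exactly the defining feature of left Bousfield localizations (see \cite[Theorem~3.2.13]{Hirsc}): a map between fibrant objects of the localized structure is a local weak equivalence if and only if it is a weak equivalence in the underlying model structure. Applied to $\sX' \to \wh{\sX}$, this yields a schemewise weak equivalence, and composing with $\sX \to \sX'$ gives (ii). The argument works uniformly for the local projective, flasque, and injective structures: by \aref{thm:LFMS-Psh} they share the same weak equivalences, so it suffices to carry out the factorization step in the projective structure.
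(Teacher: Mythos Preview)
Your argument is correct. Both directions are handled cleanly: excision is manifestly invariant under schemewise weak equivalence, and the combination of the Remark (characterizing local fibrancy as global fibrancy plus excision) with \cite[Theorem~3.2.13]{Hirsc} does exactly the work needed for (i) $\Rightarrow$ (ii). Your reduction across model structures is also fine: a local injective or flasque fibrant replacement is in particular locally projective fibrant, so the comparison map $\sX' \to \wh{\sX}$ is a local weak equivalence between locally projective fibrant objects, and Hirschhorn applies. One small point: the lift $\sX'\to\wh{\sX}$ can be chosen to make the triangle strictly commute (since $\sX\to\sX'$ is a trivial cofibration in the local structure), so ``homotopy commute'' is understating what you have.

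The paper takes a different route. Rather than arguing directly via the Remark and Bousfield localization theory, it reduces to the projective case and invokes the general machinery of $cd$-structures from \cite[Proposition~3.8, Lemma~3.5]{V:cd}, where this equivalence is established for any complete regular bounded $cd$-structure. Your approach has the virtue of being self-contained relative to what the paper has already set up, and in fact mirrors the strategy the paper itself uses later in the proof of \aref{thm:A1-flasq}. The paper's approach, by contrast, emphasizes that the result is a formal consequence of \aref{thm:Comparison} together with Voevodsky's general theory, and requires no further argument specific to this setting.
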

\begin{proof}
A fibrant replacement in the local injective or flasque model structure is also a local
projective fibrant replacement and so it suffices to consider this case. In this case the result follows from \cite[Proposition 3.8, Lemma 3.5]{V:cd}.

\end{proof}

\section{Motivic model structures}\label{section:UHC}
In this section we introduce the unstable homotopy category of motivic $G$-spaces, which is defined as the $\A^1$-localization of the local model structure.

\subsection{Unpointed motivic spaces}
\label{section:A-Local}
In what follows we consider $\A^1_S$ with trivial $G$-action and for simplicity we usually write $\A^1$, omitting mention of the base scheme.

\begin{defn}\label{defn:A-1model-str}
The motivic projective (resp.~injective, flasque) model structure on $\sM^G(S)$ is the left Bousfield localization of the  local 
projective (resp.~injective, flasque) model structure with respect to the set of projection maps 
\[
\{X \times \A^1 \xrightarrow{p_X} X \,\vert\, \ X \in \Sm^G_S\}.
\] 

The weak equivalences in each of the motivic projective (resp.~injective, flasque) model structures coincide with each other. A weak equivalence in any of these model structures will be simply called a {\sl motivic weak equivalence}. 
\end{defn}

\begin{remark}
This definition is made so that the affine line becomes contractible. In fact, it forces {\sl all} representations to become contractible and more generally equivariant vector bundle projections become equivalences, 
see \aref{prop:Hom-inv}. In particular allowing the affine line to be replaced with nontrivial linear actions in the above definition does not lead to a more general model structure.  
\end{remark}

\begin{theorem}
 The motivic projective (resp.~injective. flasque) model structure on $\sM^G(S)$ is a proper, cellular and combinatorial, and simplicial model structure. Moreover the  identity functors from the motivic projective to the motivic flasque and injective model 
structures are left Quillen equivalences. 
 \end{theorem}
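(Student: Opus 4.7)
The plan is to deduce the first three properties (cellularity, combinatoriality, simpliciality, and left properness) as a formal consequence of left Bousfield localization applied to the local model structures, and then to handle right properness and the Quillen equivalences separately.

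First, recall from \aref{thm:LFMS-Psh} that each of the three local model structures on $\sM^G(S)$ is cellular, combinatorial, simplicial, and proper. The motivic structures are, by \aref{defn:A-1model-str}, left Bousfield localizations of the corresponding local structures at the \emph{set} of projections $\{X\times\A^1\to X\}_{X\in\Sm^G_S}$. Hirschhorn's existence theorem \cite[Theorem~4.1.1]{Hirsc} (or Smith's theorem in the combinatorial setting, as used in the proof of \aref{thm:LFMS-Psh}) then gives that each motivic localization exists and inherits the cellular (resp.\ combinatorial), left proper, and simplicial structure from the local one. The cofibrations remain unchanged and the weak equivalences in the three motivic structures coincide because in all three cases they are generated from the same set of local weak equivalences together with the same set of $\A^1$-projections; this is the same argument that identified the local weak equivalences in \aref{thm:LFMS-Psh}.

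For the left Quillen equivalences, since the identity is a left Quillen equivalence between the three local model structures (\aref{thm:LFMS-Psh}) and all three motivic structures are defined as left Bousfield localizations at the \emph{same} set of maps with the same domains and codomains, the general principle that left Bousfield localization at a set of maps is functorial with respect to left Quillen equivalences (see \cite[Theorem~3.3.20]{Hirsc}) immediately gives that the identity functors remain left Quillen equivalences after localization.

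The main obstacle is right properness, since left Bousfield localization does not in general preserve right properness. I would follow the classical strategy going back to Morel--Voevodsky \cite[\S2, Theorem~3.2.23]{MV}, adapted to the equivariant setting. The strategy is to first characterize the motivically fibrant objects as those local fibrant motivic $G$-spaces $\sZ$ for which $\sZ(X)\to \sZ(X\times\A^1)$ is a weak equivalence of simplicial sets for every $X\in\Sm^G_S$, and then to produce an explicit $\A^1$-fibrant replacement functor using an equivariant analogue of the singular construction $\mathrm{Sing}^{\A^1}$ (i.e.\ the $\A^1$-homotopy functor built out of the cosimplicial scheme $\A^\bullet$ with trivial $G$-action). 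The key technical step is to verify that this fibrant replacement preserves finite limits, or at least that pulling back a motivic weak equivalence along a local fibration remains a motivic weak equivalence. Granting this, right properness follows, as in the non-equivariant case: given a pullback square along a motivic fibration, one reduces via local fibrant replacement to a pullback of local weak equivalences between motivically fibrant objects, where one can apply the characterization above together with right properness of the local model structures (\aref{thm:LFMS-Psh}).

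I expect the verification that the $\A^1$-localization functor plays well with pullbacks along fibrations to be the only genuinely nontrivial part. The ingredients needed are (a) that $\A^\bullet\times_S X \to X$ is a simplicial homotopy equivalence after applying $\sHom(-,\sZ)$ for any motivically fibrant $\sZ$, and (b) that $\mathrm{Sing}^{\A^1}$ commutes with appropriate finite limits. Both follow in the equivariant setting from the same calculations as in the non-equivariant setting, since $\A^1$ carries trivial $G$-action and the construction is compatible with restriction along $G$-equivariant maps.
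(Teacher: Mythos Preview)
Your treatment of cellularity, combinatoriality, simpliciality, left properness, and the left Quillen equivalences is correct and essentially identical to the paper's: both invoke \aref{thm:LFMS-Psh} together with Hirschhorn's localization theorem (or Smith's theorem), and both deduce the Quillen equivalences from the corresponding statement before localization.

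The only substantive difference is in the argument for right properness. The paper simply cites \cite[Lemma~3.1]{Blander}, which establishes directly that the pullback of a motivic weak equivalence along a motivic fibration (hence along a local fibration) is again a motivic weak equivalence; this uses local right properness and the characterization of motivically fibrant objects, but avoids any explicit construction. Your proposed route via a $\mathrm{Sing}^{\A^1}$-style fibrant replacement is the more classical Morel--Voevodsky approach and can be made to work, but it is more labor-intensive and carries a small inaccuracy as stated: applying $\mathrm{Sing}^{\A^1}$ once does \emph{not} produce a motivically fibrant object in general, since $\mathrm{Sing}^{\A^1}$ need not preserve Nisnevich excision; one must iterate it transfinitely with a Nisnevich-local replacement. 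Your fallback condition---that pullback of a motivic weak equivalence along a local fibration is a motivic weak equivalence---is precisely what Blander's lemma provides, so in the end you are pointed at the same statement, just via a longer detour. (Also, your reference \cite[\S2, Theorem~3.2.23]{MV} is the homotopy purity theorem, not the properness argument.)
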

\begin{proof}
Combining \aref{thm:LFMS-Psh}, \cite[Theorem~4.1.1]{Hirsc}, and \cite[Theorem 4.7]{Barwick}) we have that the motivic projective, injective and flasque model structures 
are left proper, cellular, combinatorial and simplicial. 
Moreover, 
right properness of the motivic model structures follow as in 
\cite[Lemma~3.1]{Blander}. The last statement follows from the fact that the identity functor is  a Quillen equivalence between the global projective, flasque, and injective model structures. 
\end{proof}

The unstable (unpointed) equivariant motivic homotopy category $\Ho^G(S)$ is the homotopy category associated to the motivic model structure on $\sM^G(S)$. 
The following description of fibrant objects follows immediately from the definition of the motivic model structure and standard properties of Bousfield localization (see the beginning of \aref{subsection:LPMS}).

\begin{lem}\label{lem:A1-compare}
A motivic $G$-space $\sX$ is fibrant in the motivic projective (resp.~injective, flasque) model structure if and only if
\begin{enumerate}
 \item[(i)] $\sX$ is
local projective (resp. injective, flasque) fibrant, and 

\item[(ii)] 
 $\sX(X)\to \sX(X\times\A^1)$ is a weak equivalence for all $X$ in $\Sm^{G}_{S}$.
\end{enumerate}
\end{lem}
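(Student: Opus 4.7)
The plan is to invoke the standard characterization of fibrant objects in a left Bousfield localization recalled as property (4) at the beginning of \aref{subsection:LPMS}. By \aref{defn:A-1model-str}, the motivic projective (resp.\ injective, flasque) model structure is obtained from the corresponding local model structure by left Bousfield localization at the set
\[
\Sigma_{\A^1} = \{p_X : X \times \A^1 \to X \,\vert\, X \in \Sm^G_S\}.
\]
Hence a motivic $G$-space is fibrant in the motivic model structure if and only if it is fibrant in the local model structure (which is condition (i)) and is $\Sigma_{\A^1}$-local. So the only task is to translate $\Sigma_{\A^1}$-locality into condition (ii).

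Unwinding the definition, $\sZ$ is $\Sigma_{\A^1}$-local iff for every $X \in \Sm^G_S$ the induced map of simplicial mapping spaces $\sS((X \times \A^1)^{\cof}, \sZ) \to \sS(X^{\cof}, \sZ)$ is a weak equivalence of simplicial sets. I would next observe that in each of the three local model structures, the representables $X$ and $X \times \A^1$ are already cofibrant, so cofibrant replacement may be omitted. In the projective case this is immediate from \aref{thm:PMS-Psh}. In the injective case, every monomorphism is a cofibration by \aref{thm:LMS-Psh}, so in particular $\emptyset \hookrightarrow X$. In the flasque case, taking $I = \emptyset$ and $n = 0$ in the description of $I^{\rm sch}_{\rm fl}(\Sm^G_S)$ from \aref{thm:PMS-Flasque} yields exactly the maps $\emptyset \to U$, so $X$ and $X \times \A^1$ are flasque cofibrant as well.

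Once cofibrancy is in hand, the simplicially enriched Yoneda lemma gives
\[
\sS(X, \sZ)_n = \Hom_{\sM^G(S)}(X \times \Delta[n], \sZ) \cong \sZ(X)_n,
\]
so $\sS(X, \sZ) \cong \sZ(X)$ and similarly $\sS(X \times \A^1, \sZ) \cong \sZ(X \times \A^1)$, naturally with respect to the projection $p_X$. Thus the $\Sigma_{\A^1}$-locality condition is precisely that $\sZ(X) \to \sZ(X \times \A^1)$ is a weak equivalence of simplicial sets for every $X \in \Sm^G_S$, which is condition (ii). The only nontrivial point in the argument is the verification of cofibrancy of representables in the flasque structure, but as noted this is immediate once one inspects the generating set. \textbf{Main obstacle:} essentially none beyond this bookkeeping—the lemma is a formal consequence of standard Bousfield-localization theory together with the representability conventions fixed in \aref{section:Models}.
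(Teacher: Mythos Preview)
Your proposal is correct and takes essentially the same approach as the paper, which simply states that the lemma ``follows immediately from the definition of the motivic model structure and standard properties of Bousfield localization'' without further argument. You have faithfully spelled out the details the paper leaves implicit—namely the identification of $\Sigma_{\A^1}$-locality with condition (ii) via cofibrancy of representables and the simplicial Yoneda lemma.
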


A motivic $G$-space $\sX$ is said to be \textsl{$\A^{1}$-invariant} provided $\sX(X)\to \sX(X\times\A^1)$ is a weak equivalence for all $X$ in $\Sm^{G}_{S}$.

\begin{thm}\label{thm:A1-flasq}
Let $\sX$ be a motivic $G$-space. The following  are equivalent.
\begin{enumerate}
\item[(i)] $\sX$ is equivariant Nisnevich excisive and is $\A^{1}$-invariant.
\item[(ii)] Any fibrant replacement $\sX\to \mcal{Q}\sX$ in the motivic projective (resp. flasque, injective)  model structure is a schemewise 
weak equivalence.
\end{enumerate}
 Moreover, if $f: \sX \to \sY$ is a map between motivic $G$-spaces which satisfy these equivalent conditions then $f$ is a motivic weak equivalence if and only if it is a schemewise weak equivalence.
\end{thm}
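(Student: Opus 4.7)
The plan is to construct the motivic fibrant replacement of $\sX$ in two stages---first take a local fibrant replacement, then observe that $\A^{1}$-invariance is automatic---and then to compare this particular replacement to an arbitrary one.

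The easy direction, (ii) $\Rightarrow$ (i), is essentially formal. By \aref{lem:A1-compare} a motivic fibrant object $\mcal{Q}\sX$ is locally fibrant and $\A^{1}$-invariant, and by \aref{prop:Flasq-fib} local fibrancy implies equivariant Nisnevich excision. Both ``equivariant Nisnevich excisive'' and ``$\A^{1}$-invariant'' are conditions defined purely in terms of schemewise homotopy, so they transfer from $\mcal{Q}\sX$ back to $\sX$ along the schemewise weak equivalence $\sX \to \mcal{Q}\sX$ by two-out-of-three.

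For (i) $\Rightarrow$ (ii), I would factor $\sX \to \ast$ in the local model structure to obtain a local fibrant replacement $\sX \hookrightarrow \wh{\sX}$. By \aref{prop:Flasq-fib} this map is already a schemewise weak equivalence since $\sX$ is equivariant Nisnevich excisive, so $\wh{\sX}$ inherits $\A^{1}$-invariance from $\sX$; together with local fibrancy, \aref{lem:A1-compare} then promotes $\wh{\sX}$ to a motivic fibrant object. Hence $\sX \hookrightarrow \wh{\sX}$ is itself a motivic fibrant replacement. For any other motivic fibrant replacement $\sX \to \mcal{Q}\sX$, the standard lifting argument between two fibrant replacements produces a motivic weak equivalence $\wh{\sX} \to \mcal{Q}\sX$ between motivic fibrant (hence locally fibrant) objects. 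The Bousfield localization principle of \cite[Theorem~3.2.13]{Hirsc} identifies such a map as a local weak equivalence. The central step is then the fact that a local weak equivalence between locally fibrant motivic $G$-spaces is a schemewise weak equivalence---between fibrant objects in a simplicial model category, weak equivalences can be detected by simplicial function complexes out of (co)fibrant representables $h_{X}$, which compute sections $\sY(X)$ via the Yoneda lemma. Two-out-of-three then yields that $\sX \to \mcal{Q}\sX$ is schemewise.

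For the ``moreover'' clause, one direction is immediate, since schemewise implies local implies motivic weak equivalence. Conversely, given a motivic weak equivalence $f : \sX \to \sY$ between objects satisfying (i), the construction above provides motivic fibrant replacements $\wh{\sX}, \wh{\sY}$ which are simultaneously schemewise weak equivalent to $\sX, \sY$. The induced map $\wh{f} : \wh{\sX} \to \wh{\sY}$ is then a motivic weak equivalence between motivic fibrant objects, hence schemewise by the central step above, and two-out-of-three transports this to $f$. The main obstacle---and the only non-formal input---is the ``central step'' that motivic weak equivalences between motivic fibrant objects are schemewise weak equivalences; once this is in hand, the remainder is bookkeeping with Bousfield localization and two-out-of-three.
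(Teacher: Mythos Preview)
Your proof is correct and takes essentially the same approach as the paper: both reduce to the ``central step'' that a motivic weak equivalence between motivic fibrant objects is a schemewise weak equivalence, which is exactly what the paper invokes as the local Whitehead theorem \cite[Theorem~3.2.12]{Hirsc}. The only organizational difference is that you first build a specific motivic fibrant replacement $\wh{\sX}$ (as a local fibrant replacement) and then compare it to an arbitrary $\mcal{Q}\sX$ via a lifted map, whereas the paper instead factors the given map $\sX \to \mcal{Q}\sX$ in the local model structure as a local trivial cofibration followed by a local fibration and analyzes the two pieces directly---this avoids the extra lifting step but is otherwise the same argument.
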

\begin{proof}
That (ii) implies (i) follows from \aref{prop:Flasq-fib} and \aref{lem:A1-compare}.

For the converse we treat the case of the injective model structure explicitly, the other cases are the same. 
Suppose that $\sX$ satisfies (i) and let $f: \sX \to \mcal{Q}\sX$ be a motivic injective fibrant replacement. 
By \aref{prop:Flasq-fib},
it is enough to show that $f$ is a local injective fibrant replacement. 

We factor $f$ as a composition $\sX \xrightarrow{g} \sX' \xrightarrow{f'} \mcal{Q}{\sX}$, where $g$ is a local trivial cofibration 
(in particular, a motivic trivial cofibration) and $f'$ is a local injective fibration. 
It follows from the 2-out-of-3 axiom that $f'$ is a motivic weak equivalence. 
We need to show that $f'$ is a local weak equivalence.

Since $\mcal{Q}{\sX}$ is local injective fibrant and $f'$ is a local injective fibration, 
it follows that $\sX'$ is also local injective fibrant. 
In particular, 
$g$ is a local injective fibrant replacement for $\sX$. 
We conclude from \aref{prop:Flasq-fib} that $g$ is a schemewise weak equivalence.
Note that since $\sX$ satisfies condition (i) so does $\sX'$. By  \aref{lem:A1-compare} we conclude that $\sX'$ is motivic injective fibrant.
Since $f'$ is a motivic weak equivalence between fibrant motivic $G$-spaces we conclude 
 from the local Whitehead theorem (see \cite[Theorem~3.2.12]{Hirsc})  that $f'$ is in fact a schemewise weak equivalence. 
This proves the first part of the theorem.

The second assertion of the theorem follows easily by considering motivic fibrant replacements.
\end{proof}

\subsection{Pointed motivic spaces} 

\label{section:UHC-P}
The category $\sM^G_{\bullet}(S)$ of pointed motivic $G$-spaces is the category whose objects  are pairs $(\sX,x)$ where $\sX$ is a motivic $G$-space and $x:pt\to \sX$ is a distinguished basepoint. Maps in this category are required to respect the basepoint. We usually omit explicit mention of the basepoint in notation when no confusion can arise.
By \aref{lem:elementary}, 
the category $\sM^G_{\bullet}(S)$ of pointed motivic $G$-spaces is a closed symmetric monoidal category with respect to the smash product and pointed internal hom. 
There is an adjoint functor pair 
$$
\xymatrix{
(-)_+:\sM^G(S) 
\ar@<+.7ex>[r] &
\ar@<+.7ex>[l]
\sM^G_{\bullet}(S)
}
$$
where $(\sX)_+ = \sX \coprod pt$ (which is pointed at the newly added disjoint point) and the right adjoint is the forgetful functor.

Since $\sM^G_{\bullet}(S)$ is the slice category $pt \downarrow \sM^G(S)$, the motivic projective (resp. flasque, injective) model structure on pointed motivic $G$-spaces follows from that on unpointed motivic $G$-spaces by \cite[Theorem~7.6.5]{Hirsc}. 

\begin{thm}\label{thm:Pntd-MS}
The category $\sM^G_{\bullet}(S)$ admits a proper, cellular, combinatorial, simplicial model structure with the property that a map 
$f : (\sX, x) \to (\sY, y)$ is a weak equivalence 
(resp.~cofibration, resp.~fibration) if and only if $f : \sX \to \sY$ is a weak equivalence (resp.~cofibration, resp.~fibration) 
in the motivic projective model structure on $\sM^{G}(S)$. The motivic flasque and motivic injective model structures on $\sM^{G}(S)$ similarly induce model structures on $\sM^{G}_{\bullet}(S)$. The identity is a Quillen equivalence between the projective, flasque, and injective model structures on $\sM^{G}_{\bullet}(S)$.
\end{thm}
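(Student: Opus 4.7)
The plan is to derive everything by viewing $\sM^G_{\bullet}(S)$ as the under-category $pt \downarrow \sM^G(S)$ and transferring the three motivic model structures on $\sM^G(S)$ through this slicing. The cited Hirschhorn \cite[Theorem~7.6.5]{Hirsc} gives the existence portion of the theorem essentially for free: given a model structure on $\sM^G(S)$, the slice category $pt \downarrow \sM^G(S)$ admits a model structure in which a morphism is a weak equivalence, cofibration, or fibration exactly when its image under the forgetful functor $\sM^G_{\bullet}(S) \to \sM^G(S)$ has that property. Applied to each of the motivic projective, flasque, and injective structures on $\sM^G(S)$ established earlier in \aref{section:A-Local}, this produces the three claimed model structures on $\sM^G_{\bullet}(S)$.

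Next I would verify the additional adjectives. Properness transfers immediately: limits and colimits in a slice category are computed in the underlying category (with the basepoint induced in the obvious way), so the pushout/pullback diagrams that appear in the definitions of left/right properness for $\sM^G_{\bullet}(S)$ are the very same diagrams in $\sM^G(S)$, where properness is already known. For cellularity and combinatoriality, one takes the generating (trivial) cofibrations of the unpointed structure and produces generating sets for the pointed structure by applying $(-)_+$, noting that for any generating map $\sX \to \sY$ the diagram in $\sM^G(S)$ maps to the diagram $\sX_+ \to \sY_+$ under $pt \mapsto pt \coprod pt$; since the smallness/presentability and the cellularity hypotheses pass through $(-)_+$ (a left adjoint preserving the relevant colimits), the pointed structures inherit these properties. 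The simplicial enrichment is the standard pointed one from~\eqref{eqn:simp-str}, and the pushout-product/lifting axioms reduce at once to the corresponding statements in $\sM^G(S)$ by forgetting basepoints.

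Finally, for the Quillen equivalences between the three pointed model structures, I would simply transport the unpointed equivalences of \aref{thm:globaleq} and the motivic analogue along the adjunction $(-)_+ \dashv U$. Explicitly, the identity on $\sM^G_{\bullet}(S)$ is a left Quillen functor from the projective to the flasque (resp.\ flasque to injective) pointed structure because the characterization of cofibrations and weak equivalences via the forgetful functor $U$ immediately reduces the lifting and preservation conditions to the corresponding assertions in $\sM^G(S)$, which hold by assumption. For the equivalence on homotopy categories, it suffices to observe that the derived unit and counit of the identity adjunction are computed underlying the forgetful functor, and so they are isomorphisms in the pointed homotopy categories precisely because they are in the unpointed ones.

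The main obstacle, such as it is, is bookkeeping rather than mathematical: one must be slightly careful in the cellularity step to ensure that the augmented generating sets $\{i_+ : \sX_+ \to \sY_+\}$ (equivalently, the pushouts $pt \cup_{\sX} \sY$) remain effective monomorphisms in $\sM^G_{\bullet}(S)$ and that the smallness cardinals are preserved, but this is a standard verification that the functor $(-)_+$ preserves the cellular/combinatorial data. All remaining claims are formal consequences of the slice-category construction.
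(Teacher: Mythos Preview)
Your proposal is correct and follows exactly the paper's approach: the paper simply invokes \cite[Theorem~7.6.5]{Hirsc} on the slice category $pt \downarrow \sM^G(S)$ and leaves the remaining adjectives and the Quillen equivalences implicit. Your write-up is in fact more detailed than the paper's, which offers no further justification beyond that citation.
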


The equivariant pointed motivic homotopy category 
$\Ho^G_{\bullet}(S)$ is the homotopy category associated to 
any of the equivalent motivic model structures on $\sM^{G}_{\bullet}(S)$.

\begin{prop}\label{prop:Ho-Smash}
The smash product preserves motivic weak equivalences and injective cofibrations in $\sM^G_{\bullet}(S)$. 
This  induces a symmetric closed monoidal category structure on $\Ho^G_{\bullet}(S)$.
\end{prop}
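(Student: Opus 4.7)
The plan is to prove, in order: (i) the smash product preserves injective cofibrations; (ii) for each pointed motivic $G$-space $\sZ$, the functor $-\wedge \sZ$ preserves motivic weak equivalences; and (iii) the resulting bifunctor descends to a symmetric closed monoidal structure on $\Ho^G_\bullet(S)$.

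For (i), injective cofibrations are pointwise monomorphisms of pointed simplicial sets, and smash is computed pointwise in $\sM^G_\bullet(S)$. Since smash of pointed simplicial sets preserves monomorphisms in each variable, (i) is immediate; equivalently the pushout-product of two injective cofibrations is again an injective cofibration.

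For (ii), we reduce to the case $\sZ = U_+$ for $U \in \Sm^G_S$. By \aref{lem:finitecolimit}, any $\sZ$ is a filtered colimit of finite colimits of objects $U_+\wedge \Delta[n]_+$; smash commutes with colimits in each variable, motivic weak equivalences are stable under filtered colimits (by combinatoriality) and under homotopy pushouts along monomorphism cofibrations, and the functor $-\wedge \Delta[n]_+$ preserves weak equivalences by the simplicial model category axioms. Thus it suffices to show $-\wedge U_+$ is a left Quillen endofunctor, for which we verify that it sends each of the three types of generating motivic trivial cofibrations to a motivic trivial cofibration. A schemewise trivial monomorphism, smashed with the pointwise-discrete presheaf $U_+$, becomes a wedge indexed by $\Hom(-,U)$ of copies of the original and hence remains a schemewise trivial monomorphism. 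A map $Q^{hp}\to X$ attached to a distinguished equivariant Nisnevich square $Q$ becomes the corresponding map for $U\times Q$, which is again a distinguished equivariant Nisnevich square because open immersions, \'etaleness, the reduced-complement isomorphism, and $G$-equivariance are all preserved under base change along $U\times(-)$ (cf.\ the completeness argument in the proof of \aref{thm:Comparison}). Finally, the $\A^1$-projection $(X\times \A^1)_+\to X_+$ becomes the $\A^1$-projection for $U\times X\in \Sm^G_S$. Since every object is injective cofibrant, a left Quillen functor preserves all weak equivalences by Ken Brown's lemma, yielding (ii).

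For (iii), preservation of weak equivalences in each variable ensures that smash descends to a bifunctor on $\Ho^G_\bullet(S)$ that inherits the symmetric monoidal axioms from the point-set level, with unit $S_+$ (which is injective cofibrant). The closed structure descends from the point-set internal hom $\sHom(-,-)$ of \aref{lem:elementary}, using that it is right adjoint to smash. The main obstacle is step (ii), and within it the Nisnevich case, whose proof depends on the stability of distinguished equivariant Nisnevich squares under base change by smooth $G$-schemes.
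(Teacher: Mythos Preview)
Your argument is correct and, for parts (i) and (ii), is essentially a fleshed-out version of the paper's proof: the paper simply cites \cite[Lemma~2.20]{DRO} for the preservation of motivic weak equivalences, and the argument there is precisely your reduction to $\sZ=U_+$ via the cell decomposition of \aref{lem:finitecolimit}, followed by checking that $-\wedge U_+$ carries the localizing maps (Nisnevich squares and $\A^1$-projections) to motivic weak equivalences. Two small points of phrasing: what you call ``generating motivic trivial cofibrations'' are really the localizing maps together with the global trivial cofibrations (the genuine generating trivial cofibrations of a Bousfield localization are not so explicit), and stability of motivic weak equivalences under filtered colimits is not a formal consequence of combinatoriality alone but rather uses that the generating cofibrations and the localizing maps have finitely presented (co)domains.

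Part (iii) is where you diverge slightly from the paper. The paper obtains closedness by switching to the motivic \emph{projective} model structure and verifying the pushout-product axiom there (again citing \cite[Corollary~2.19]{DRO}), which by general theory yields a closed symmetric monoidal homotopy category. Your route stays in the injective model structure: since every object is cofibrant and $-\wedge\sZ$ preserves cofibrations and all weak equivalences, it is left Quillen for each $\sZ$, so the derived right adjoint $\mathbf{R}\sHom(\sZ,-)$ furnishes the internal hom. Your approach is more direct given what you have already established; the paper's approach has the advantage of yielding the full monoidal model category structure (useful, e.g., for constructing monoidal stable categories later).
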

\begin{proof}
The same argument as in \cite[Lemma~2.20]{DRO} shows that smashing with any pointed motivic $G$-space preserves motivic weak equivalence.
Since the cofibrations in the motivic injective model structure are monomorphisms, 
it follows immediately that smash product preserves cofibrations.

The first assertion implies the smash product defines a structure of symmetric monoidal structure on ${\Ho}^G_{\bullet}(S)$.
We need to show that this monoidal structure is closed. For this we may use any of the equivalent model structures. We use the motivic projective model structure and it 
suffices to show that the motivic projective model structure on $\sM^G_{\bullet}(S)$ is monoidal. 
This follows from the same argument as in \cite[Corollary~2.19]{DRO}.
\end{proof}  

Recall that the simplicial circle $S^1$ is the constant presheaf ${\Delta^1}/{\partial \Delta^1}$ pointed by the image of $\partial\Delta^1$. 
As usual we write $S^n$ for $(S^1)^{{\wedge} n}$.
Smashing with the simplicial circle gives a functor
\[
\Sigma \sF = S^1\wedge \sF.
\]
Let $\Omega^1(-) = \sHom_{\bullet}(S^1, -)$ be the right adjoint of $S^1\wedge (-)$. 
\aref{prop:Ho-Smash} implies that $\left(\Sigma(-),\Omega^1(-)\right)$ is a Quillen pair of endofunctors on $\sM^G_{\bullet}(S)$. 
In particular, 
we get an adjoint pair of endofunctors

\begin{equation*}
\xymatrix{
{\bf L}\Sigma(-)
\colon
{\Ho}^G_{\bullet}(S) 
\ar@<+.7ex>[r] &
\ar@<+.7ex>[l]
{\Ho}^G_{\bullet}(S)
\colon 
{\bf R}\Omega^1(-).
}
\end{equation*}   

For  $X$ in $\Sm_{S}^{G}$, there is an adjoint pair 
${\rm Fr}_X:\sS_{\bullet} \rightleftarrows\sM^G_{\bullet}(S):{\rm Ev}_X$ 
(see \eqref{eqn:adjunc}).
Here ${\rm Fr}_X(K) := X_{+}\wedge K$ and ${\rm Ev}_X(\sF) := \sF(X)$ .

\begin{proposition}\label{prop:QA-Psh}
The functors $({\rm Fr}_X, {\rm Ev}_X)$ form a Quillen pair with respect to the motivic (resp.~local, schemewise)  projective (resp.~flasque, injective) model structures on $\sM^G_{\bullet}(S)$. 
\end{proposition}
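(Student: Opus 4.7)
The strategy is to verify that $(\mathrm{Fr}_X, \mathrm{Ev}_X)$ is a Quillen pair for the three schemewise model structures directly, and then to bootstrap to the local and motivic versions via a general property of left Bousfield localization. Because the source $\sS_\bullet$ is never localized in the statement, the bootstrap will be essentially formal, so the only real content is the schemewise claim.

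For the schemewise claim, I would show that $\mathrm{Ev}_X$ is right Quillen. Schemewise trivial fibrations are, in each of the three model structures, exactly the maps $\sF \to \sG$ such that $\sF(U) \to \sG(U)$ is a trivial Kan fibration for every $U \in \Sm^G_S$; hence evaluation at $X$ preserves trivial fibrations. For schemewise fibrations the pointwise characterization is valid in the projective case, and it suffices for the other two since by \aref{thm:globaleq} any flasque or injective fibration is in particular a projective fibration, and projective fibrations are pointwise Kan fibrations. Equivalently, and perhaps more cleanly on the left adjoint side, $\mathrm{Fr}_X$ sends the generating cofibrations $\partial\Delta^n_+ \to \Delta^n_+$ (resp.\ generating trivial cofibrations $\Lambda^n_{i,+} \to \Delta^n_+$) to $X_+ \wedge (\partial\Delta^n \subset \Delta^n)$ (resp.\ $X_+ \wedge (\Lambda^n_i \subset \Delta^n)$), which are among the generating projective (trivial) cofibrations of $\sM^G_\bullet(S)$ listed in \aref{thm:PMS-Psh}; these are in turn cofibrations in the flasque and injective structures by \aref{thm:globaleq}, and they remain weak equivalences since schemewise weak equivalences coincide in all three global structures.

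For the local and motivic cases I would invoke that each is obtained by left Bousfield localization of the corresponding schemewise model structure (\aref{thm:LFMS-Psh} and \aref{defn:A-1model-str}). A standard property of left Bousfield localization is that the class of cofibrations is unchanged while the class of weak equivalences is enlarged; hence the class of trivial cofibrations can only grow. Since $\mathrm{Fr}_X$ has already been shown to take cofibrations (resp.\ trivial cofibrations) of $\sS_\bullet$ to schemewise cofibrations (resp.\ schemewise trivial cofibrations) in $\sM^G_\bullet(S)$, and since $\sS_\bullet$ itself is not being localized, the same maps continue to be cofibrations (resp.\ trivial cofibrations) in the local and motivic model structures. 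This proves $(\mathrm{Fr}_X, \mathrm{Ev}_X)$ is a Quillen pair in all nine cases.

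I do not expect any substantive obstacle; the only point that deserves to be spelled out is the left Bousfield localization mechanism, namely that a left Quillen functor whose source is unlocalized automatically remains left Quillen into any left Bousfield localization of its target. If anything, the most delicate bookkeeping is simply to keep straight which generating (trivial) cofibrations one is matching $\mathrm{Fr}_X$ against in the flasque case, where the generators $i_I \square (\partial\Delta^n \subset \Delta^n)$ of \aref{thm:PMS-Flasque} are richer than the projective ones; but this is avoided entirely by the comparison in \aref{thm:globaleq}.
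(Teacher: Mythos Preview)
Your proposal is correct and follows essentially the same approach as the paper. The paper compresses your two reduction steps into one by noting that the identity functor on $\sM^G_\bullet(S)$ is left Quillen from the schemewise projective model structure to every other model structure under consideration (flasque, injective, and all their localizations), so it suffices to check that $\mathrm{Ev}_X$ preserves fibrations and trivial fibrations in the schemewise projective model structure; your argument unpacks this same reduction into the two pieces ``flasque/injective fibrations are projective'' and ``left Bousfield localization preserves left Quillen functors out of an unlocalized source'', which is fine.
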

\begin{proof}
The identity on $\sM^{G}_{\bullet}(S)$ is a left Quillen functor from the schemewise projective model structure to any of the other schemewise model structures as well any of their localizations and so we only need to consider the schemewise projective model structure. 
The lemma follows immediately from the observation that ${\rm Ev}_{X}$ preserves
fibrations and trivial fibrations.
\end{proof}

\begin{corollary}\label{cor:DQA-Psh}
Let $\sF$ be a pointed motivic $G$-space. Suppose that $\sF$ is equivariant Nisnevich excisive and is $\A^1$-invariant. 
Then for any pointed simplicial set $K$ and any $X$ in $\Sm^G_S$, 
there is a natural isomorphism
\[
[K, \sF(X)] \iso 
[K \wedge X_+, \sF ]_{\Ho^G_{\bullet}(S)}.
\]
In particular there is an isomorphism $\pi_{n}\sF(X) \iso [S^{n} \wedge X_+, \sF ]_{\Ho^{G}_{\bullet}(S)}$ for any $n$.
\end{corollary}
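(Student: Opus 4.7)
The plan is to apply the Quillen adjunction $({\rm Fr}_X, {\rm Ev}_X)$ from \aref{prop:QA-Psh} together with the characterization of motivic fibrant replacements in \aref{thm:A1-flasq}. The hypotheses on $\sF$ (equivariant Nisnevich excisive plus $\A^{1}$-invariant) are exactly what is needed to ensure that a pointwise evaluation computes the correctly derived mapping space.

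First I would work in the motivic projective model structure on $\sM^{G}_{\bullet}(S)$, and choose a motivic projective fibrant replacement $j\colon \sF \to \mcal{Q}\sF$. By \aref{prop:QA-Psh}, ${\rm Fr}_{X}\colon \sS_{\bullet} \rightleftarrows \sM^{G}_{\bullet}(S)\colon {\rm Ev}_{X}$ is a Quillen pair with respect to the motivic projective model structure. Since every pointed simplicial set is cofibrant, the derived adjunction yields a natural isomorphism
\[
[K \wedge X_{+},\sF]_{\Ho^{G}_{\bullet}(S)}
= [{\rm Fr}_{X}(K),\sF]_{\Ho^{G}_{\bullet}(S)}
\iso [K,(\mcal{Q}\sF)(X)]_{\Ho(\sS_{\bullet})}.
\]

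Second, the hypotheses on $\sF$ together with \aref{thm:A1-flasq} imply that the motivic fibrant replacement $j\colon \sF \to \mcal{Q}\sF$ is a schemewise weak equivalence. In particular, the evaluation $j(X)\colon \sF(X) \to (\mcal{Q}\sF)(X)$ is a weak equivalence of pointed simplicial sets, giving a natural isomorphism $[K,\sF(X)] \iso [K,(\mcal{Q}\sF)(X)]$ in the pointed simplicial homotopy category. Composing with the derived adjunction isomorphism above produces the asserted natural isomorphism $[K,\sF(X)] \iso [K \wedge X_{+},\sF]_{\Ho^{G}_{\bullet}(S)}$.

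Finally, setting $K = S^{n}$ and using that $[S^{n},\sF(X)] = \pi_{n}\sF(X)$ in the pointed simplicial homotopy category gives the last assertion. The only nontrivial input is the schemewise-equivalence statement from \aref{thm:A1-flasq}; with that in hand the rest is a formal adjunction argument, so I do not anticipate a substantive obstacle. One small bookkeeping point to verify is that $\sF(X)$ is in particular Kan (so that $[K,\sF(X)]$ genuinely equals $\pi_{n}\sF(X)$ when $K = S^{n}$); this follows because $(\mcal{Q}\sF)(X)$ is Kan and $\sF(X)$ is schemewise weakly equivalent to it, or more directly because the schemewise-fibrant hypothesis is built into the local/motivic projective fibrancy of $\mcal{Q}\sF$.
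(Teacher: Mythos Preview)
Your proposal is correct and follows essentially the same route as the paper: use the Quillen pair $({\rm Fr}_X,{\rm Ev}_X)$ from \aref{prop:QA-Psh} together with \aref{thm:A1-flasq} to identify $[K\wedge X_+,\sF]$ with $[K,(\mcal{Q}\sF)(X)]\cong[K,\sF(X)]$. The only cosmetic difference is that the paper works with a motivic \emph{injective} fibrant replacement while you use the projective one; since \aref{thm:A1-flasq} covers all three model structures this is immaterial. One small quibble: in your final paragraph, the claim that $\sF(X)$ is Kan because it is weakly equivalent to a Kan complex is not literally correct, but this does not affect the argument since $[K,\sF(X)]$ already denotes a morphism set in the homotopy category of $\sS_\bullet$.
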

\begin{proof}
Let $\sF \to \mcal{Q}\sF$ be a motivic injective fibrant replacement. 
By \aref{thm:A1-flasq} it is a schemewise weak equivalence. 
Together with the previous proposition, we have natural isomorphisms
\begin{align*}
[K \wedge X_+, \sF ]_{\Ho^{G}_{\bullet}(S)}
 & \iso   
[{\bf L}{\rm Fr}_X(K), \sF]_{\Ho^{G}_{\bullet}(S)} \\
& \iso 
[K, {\bf R}{\rm Ev}_X (\sF) ] 
 \iso  [K, \mcal{Q}\sF(X) ] 
 \iso  [K, \sF(X) ].
\end{align*}
\end{proof}

\subsection{Equivariant vector bundles}
\label{subsubsection:EBM}
 We finish this section with the observation that
 equivariant vector bundle projections are motivic weak equivalences. Recall that a 
 $G$-equivariant vector bundle $p:\sV\to X$  is an equivariant map of $G$-schemes 
 which is a vector bundle when the $G$-action is forgotten.
An elementary $\A^1$-homotopy between maps $f,g : \sX \to \sY$ of motivic $G$-spaces, 
is an equivariant map $H:\sX\times\A^1\to\sY$ such that $H\circ i_0=f$ and $H\circ i_1=g$. Elementary $\A^1$-homotopic maps become equal in the equivariant motivic homotopy category. 

\begin{prop}\label{prop:Hom-inv}
Let $p:\sV \to X$ be a $G$-equivariant vector bundle. Then $p$ is an equivariant motivic weak equivalence.
\end{prop}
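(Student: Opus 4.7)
The plan is to exhibit $p$ as an elementary $\A^1$-homotopy equivalence in $\sM^G(S)$, using the zero section and fiberwise scalar multiplication. By the preceding remark, an elementary $\A^1$-homotopy equivalence descends to an isomorphism in $\Ho^G(S)$ and in particular is a motivic weak equivalence, so this suffices.

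First I would let $s : X \to \sV$ denote the zero section. Since $\sV \to X$ is a $G$-equivariant vector bundle, the zero section is $G$-equivariant (the origin in each fiber is fixed by the linear $G$-action), and by construction $p \circ s = \id_X$. It remains to produce an equivariant $\A^1$-homotopy between $s \circ p$ and $\id_\sV$.

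The key step is to use the fiberwise scaling map
\[
H : \sV \times \A^1 \longrightarrow \sV, \qquad (v,t) \longmapsto t \cdot v,
\]
where $\A^1 = \A^1_S$ carries the trivial $G$-action. I would verify that $H$ is $G$-equivariant: by the definition of a $G$-equivariant vector bundle, the $G$-action on $\sV$ is linear on fibers, so it commutes with scalar multiplication. Explicitly, $H(g \cdot v, t) = t(g \cdot v) = g \cdot (tv) = g \cdot H(v,t)$. Restricting $H$ along $i_0, i_1 : \sV \to \sV \times \A^1$ yields $H \circ i_0 = s \circ p$ and $H \circ i_1 = \id_\sV$, so $H$ is an elementary $\A^1$-homotopy between these two equivariant maps.

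Combining the two steps, $p$ and $s$ are mutually inverse in $\Ho^G(S)$, so $p$ is a motivic weak equivalence. The main conceptual point, which is the only thing that could go wrong, is the equivariance of scalar multiplication; this is built into the very definition of a $G$-equivariant vector bundle (linearity of the $G$-action on fibers), so there is no real obstacle to the argument.
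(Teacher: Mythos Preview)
Your proof is correct and follows essentially the same approach as the paper: both use the zero section together with the fiberwise scalar multiplication $H:\sV\times\A^1\to\sV$ to exhibit $p$ as an elementary $\A^1$-homotopy equivalence. The only difference is that the paper justifies the $G$-equivariance of $H$ via the naturality of the fiberwise contraction under vector bundle morphisms and pullbacks (which is cleaner for a general flat group scheme), whereas you argue directly from fiberwise linearity of the action; these amount to the same thing.
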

\begin{proof} 
Write $i:X\to \sV$ for the zero-section. Then $p\circ i=\id$ and it suffices to show that there is an elementary $\A^1$-homotopy between $i\circ p$ is and the identity.

Let $\sE\to M$ be a vector bundle over a scheme $M$. Write 
$H_{\sE}:\sE\times\A^1\to \sE$ for the standard fiberwise contraction. Explicitly, if $U=\Spec(R)\subseteq M$ is an open affine subscheme over which $\sE$ becomes trivial, then $H_{\sE|_{U}}$ is the morphism 
$R[X_1,\cdots,X_n] \to R[T,X_1,\cdots ,X_n]$ by $X_j\mapsto TX_j$.

For any morphism of vector bundles $f:\sE \to \sF$ we have $H_{\sE}\circ (f\times 1_{\A^1}) = f\circ H_{\sF}$ and for any map of schemes $g:Y\to X$, $g^{*}H_{\sE} = H_{g^*\sE}$. Consequently, $H_{\sV}$ is equivariant for any equivariant vector bundle $\sV\to X$ and thus provides the desired elementary $\A^1$-homotopy between $i\circ p$ and the identity.

\end{proof}

\section{Equivariant Nisnevich excision and  \texorpdfstring{$K$}{K}-theory}\label{section:Nis-Desc-rep}
In this section, $S$ is a regular Noetherian base scheme of finite Krull dimension. We assume that 
$G \to S$ is a flat group scheme which satisfies the resolution property.
\begin{ResP}\label{ResP}
Every coherent $G$-module on $X$ is the equivariant quotient of a $G$-vector bundle.
\end{ResP}
 
The resolution property holds in rather broad generality. See  \cite[Remark 1.9]{ThomasonDuke56}
 for a list of cases when the resolution property is fulfilled and \cite{Thomason:resolution} for a comprehensive discussion. We mention a few cases  when the resolution property holds for a smooth $G$-scheme $X$ over a regular, Noetherian base $S$:
\begin{enumerate}
\item[(i)] $G$ is a finite constant group scheme,
\item[(ii)] $G$ is reductive,
\item[(iii)] $\dim(S)\leq 1$, $G\to S$ affine.
\end{enumerate}
Under the assumption, we show that equivariant algebraic $K$-theory of smooth schemes over $S$ is representable in the equivariant motivic homotopy category. 
As an application we characterize all equivariantly contractible smooth affine curves with group action, 
and moreover all equivariant vector bundles on such curves. We also establish equivariant Nisnevich excision for certain non-smooth schemes when $G$ is finite.

\subsection{Nisnevich excision}
Let $X$ be a $G$-scheme over $S$. Write 
$\mcal{P}^{G}(X)$ for the exact category of $G$-vector bundles. 
The equivariant algebraic $K$-theory groups are the homotopy groups $K_{i}^{G}(X) := \pi_{i}\mcal{K}(\mcal{P}^{G}(X))$ of the associated $K$-theory space, defined by Waldhausen's $S_{\bullet}$-construction. 
The assignment $X\mapsto \mcal{K}(\mcal{P}^{G}(X))$ is not a presheaf on $\Sch_{S}^{G}$ but only a pseudo-functor. 
We may obtain a presheaf by  a rectification procedure 
to the pseudo-functor $X\mapsto \sP^{G}(X)$. 
Using \cite[Lemma~3.2.6]{Thomason:hocolim} and the rectification procedure explained in \cite[Chapter~5, p.~179]{Jardine:genet}, yields a presheaf of 
simplicial sets $\sK^G$
on $\Sch^G_S$ such that $\pi_{i}\sK^{G}(X) = K_{i}^{G}(X)$ for all $X$.

\begin{theorem}\label{thm:NIS-DESC1}
Let $G\to S$ be a flat algebraic group scheme over a regular Noetherian base scheme $S$  
which satisfies the resolution property for all $X$ in $\Sm_{S}^{G}$. Then $\sK^G$ is equivariant Nisnevich excisive on $\Sm^G_S$. 
\end{theorem}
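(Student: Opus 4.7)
The plan is to verify the two defining conditions of equivariant Nisnevich excision separately. The contractibility $\sK^G(\emptyset) \simeq *$ is immediate, since $\sP^G(\emptyset)$ contains only the zero object and the $K$-theory of the zero exact category is contractible.

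For the main homotopy cartesian condition on a distinguished equivariant Nisnevich square as in~\eqref{eqn:cd-square}, I set $Z = (X \setminus A)_{\red}$ and $W = (Y \setminus B)_{\red}$, so that by hypothesis $p$ restricts to a $G$-equivariant isomorphism $W \iso Z$. The strategy is to reduce to an excision statement for $K$-theory with supports. Using the resolution property together with regularity of $S$ (and hence of any smooth $G$-scheme over $S$), I can identify $\sK^G$ with the Waldhausen $K$-theory of $G$-equivariant perfect complexes via a standard Quillen-style $Q = +$ comparison; in particular, for a $G$-invariant closed subscheme $Z \subseteq X$ I obtain a $K$-theory of perfect $G$-complexes acyclic on $X \setminus Z$, which I denote $\sK^G_Z(X)$.

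Thomason's equivariant localization theorem for $K$-theory then yields homotopy fibration sequences
$$
\sK^G_Z(X) \to \sK^G(X) \to \sK^G(A), \qquad \sK^G_W(Y) \to \sK^G(Y) \to \sK^G(B),
$$
and by a standard map-of-fibrations argument, the square will be homotopy cartesian provided the pullback $p^* \colon \sK^G_Z(X) \to \sK^G_W(Y)$ is a weak equivalence. To prove this equivalence, I would invoke the equivariant analogue of Thomason--Trobaugh excision: since $p$ is $G$-equivariant étale and $W_{\red} \iso Z_{\red}$, pullback along $p$ should induce an equivalence of the categories of $G$-equivariant perfect complexes supported on $Z$, respectively on $W$. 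The intuition is that such a supported complex is determined by its restriction to a sufficiently small étale neighborhood of $Z$, and the distinguished square condition guarantees that $Y \to X$ supplies such a neighborhood compatibly with the $G$-action.

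The main obstacle is this excision equivalence $\sK^G_Z(X) \simeq \sK^G_W(Y)$. In the non-equivariant regular setting it is classical; extending it to the equivariant setting for a flat algebraic group scheme satisfying the resolution property requires using the resolution property to move freely between $G$-vector bundles, $G$-coherent sheaves, and $G$-equivariant perfect complexes, and invoking Thomason's machinery for equivariant $K$-theory to control supports under equivariant étale base change. A secondary technical point is to confirm that the rectification of the pseudo-functor $X \mapsto \sP^G(X)$ to the strict presheaf $\sK^G$ is compatible with the localization fibration sequences, so that the homotopy cartesian diagnostic transfers to $\sK^G$ itself.
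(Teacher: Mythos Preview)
Your overall architecture---reduce to a map of localization fibration sequences and check that the induced map on fibers is an equivalence---is exactly the skeleton of the paper's argument. The difference is in how the fiber terms are modeled, and this is where you make your life harder than necessary.

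The paper works with equivariant $G$-theory (the $K$-theory of the abelian category of coherent $G$-sheaves) rather than with $K$-theory of perfect complexes with support. Thomason's localization theorem \cite[Theorem~2.7]{Thomason:GK} gives directly the fibration sequences
\[
G^G(X\setminus A)\to G^G(X)\to G^G(A),\qquad G^G(Y\setminus B)\to G^G(Y)\to G^G(B),
\]
so the fiber is simply $G^G$ of the closed complement. Since the distinguished square hypothesis says $(Y\setminus B)_{\red}\iso (X\setminus A)_{\red}$, the map on fibers is an equivalence on the nose---no excision argument is needed. The resolution property (plus regularity) is invoked only at the very end, via \cite[Theorem~1.8]{ThomasonDuke56}, to identify $G^G$ with $\sK^G$ on smooth $G$-schemes.

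By contrast, you pass through perfect complexes and $K$-theory with supports $\sK^G_Z(X)$, and then the fiber comparison becomes an equivariant Thomason--Trobaugh excision statement which, as you yourself flag, is the main obstacle and is not something you can simply cite. In the regular case you are in, $\sK^G_Z(X)$ is equivalent (by d\'evissage and the resolution property) to $G^G(Z)$, so your route can be made to work, but it is a detour: you are proving an excision theorem whose content, unpacked, is precisely the isomorphism of closed complements that the paper uses directly. The secondary worry you raise about rectification is harmless in either approach.

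In short: not wrong, but the paper's use of $G$-theory collapses your ``main obstacle'' to a one-line observation.
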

\begin{proof}
We need to show that if 
\begin{equation*}\label{eqnA}
\xymatrix@C.9pc{ 
W \ar[r] \ar[d] & Y \ar[d]^{f} \\
U \ar@{^{(}->}[r]^{j} & X} 
\end{equation*}
is an equivariant distinguished square in $\Sm^G_S$
then the diagram of simplicial sets
\begin{equation*}\label{eqnB}
\xymatrix@C.9pc{
\sK^G(X) \ar[r]^{j^*} \ar[d]_{f^*} & \sK^G(U) \ar[d] \\
\sK^G(Y) \ar[r] & \sK^G(W)}
\end{equation*}
is homotopy cartesian.

Consider the commutative diagram of fibration sequences (see \cite[Theorem 2.7]{Thomason:GK}):
\begin{equation*}\label{eqn:K-descent-1}
\xymatrix@C1pc{
G^G(X \setminus U) \ar[r] \ar[d] & G^G(X) \ar[r]^{j^*} \ar[d]_{f^*} & G^G(U) \ar[d] \\
G^G(Y \setminus W) \ar[r] & G^G(Y) \ar[r] & G^G(W),}
\end{equation*}
where $G^G(X)$ denotes the $K$-theory of the exact category of equivariant coherent sheaves on a $G$-scheme $X$. By \cite[Theorem 1.8]{ThomasonDuke56}, the resolution property implies that equivariant $G$-theory agrees with equivariant $K$-theory and so it suffices to see that the right-hand square is homotopy cartesian.
But this follows immediately from the fact that $X\setminus U\iso Y\setminus W$.
\end{proof}

\begin{corollary}\label{cor:M-Uns-Rep1}
With hypothesis as in the previous theorem, there is a natural isomorphism
\[
K^G_i(X) 
\iso 
[S^i \wedge X_+, \sK^{G}]_{\Ho^{G}_{\bullet}(S)}
\]
for any $X$ in $\Sm_{S}^{G}$.
\end{corollary}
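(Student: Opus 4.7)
The plan is to verify that the presheaf $\sK^G$ satisfies the two hypotheses of \aref{cor:DQA-Psh} and then simply invoke that corollary.

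First, I would point out that $\sK^G$ is canonically a pointed presheaf: the basepoint on $\sK^G(X)$ is supplied by the zero $0$-th object in the Waldhausen $S_\bullet$-construction, and this is natural in $X$ since restriction functors preserve zero objects and exact structures. Thus $\sK^G$ can be regarded as an object of $\sM^G_\bullet(S)$.

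Next I would verify the two hypotheses required to apply \aref{cor:DQA-Psh}. Equivariant Nisnevich excision for $\sK^G$ on $\Sm^G_S$ is exactly the content of \aref{thm:NIS-DESC1}. For $\A^1$-invariance, the hypothesis that $S$ is regular Noetherian and $G$ satisfies the resolution property (so that equivariant $K$-theory coincides with equivariant $G$-theory on $\Sm^G_S$ by \cite[Theorem~1.8]{ThomasonDuke56}) reduces the statement $K_i^G(X) \iso K_i^G(X\times\A^1)$ to homotopy invariance of equivariant $G$-theory. This is Thomason's theorem \cite[Theorem~4.1]{Thomason:GK}, which shows that for any $G$-scheme $X$ and any equivariant vector bundle $\sV \to X$, the flat pullback $G^G(X) \to G^G(\sV)$ is a weak equivalence; applying this to the trivial line bundle $X\times\A^1 \to X$ gives precisely the $\A^1$-invariance needed.

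Having verified both hypotheses, I would apply \aref{cor:DQA-Psh} with $\sF = \sK^G$ and $K = S^i$. The corollary yields the natural isomorphism
\[
\pi_i \sK^G(X) \iso [S^i \wedge X_+, \sK^G]_{\Ho^G_\bullet(S)},
\]
and since $\pi_i \sK^G(X) = K_i^G(X)$ by the construction of $\sK^G$ (via rectification of the pseudo-functor $X\mapsto \sP^G(X)$ using \cite[Lemma~3.2.6]{Thomason:hocolim}), this is exactly the desired isomorphism. Naturality in $X$ follows from the naturality statement in \aref{cor:DQA-Psh}.

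I do not expect a real obstacle here, since the technical work has already been carried out: the Nisnevich excision theorem \aref{thm:NIS-DESC1} does the heavy lifting on the sheaf-theoretic side, and homotopy invariance is a classical theorem of Thomason. The only thing to be slightly careful about is that the rectified presheaf $\sK^G$ has the correct homotopy type on each $X$ (so that $\pi_i\sK^G(X)=K_i^G(X)$) and that both excision and $\A^1$-invariance survive rectification; both are standard, since rectification changes the value of the pseudo-functor only up to natural weak equivalence.
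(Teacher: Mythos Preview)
Your proof is correct and follows the same route as the paper: verify equivariant Nisnevich excision via \aref{thm:NIS-DESC1}, verify $\A^1$-invariance by identifying $K$-theory with $G$-theory via the resolution property and invoking Thomason's homotopy invariance, then apply \aref{cor:DQA-Psh}. The paper's proof is terser (it cites \cite[Corollary~4.2]{Thomason:GK} rather than Theorem~4.1, and omits the remarks on pointedness and rectification), but the substance is identical.
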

\begin{proof}
Recall that equivariant $G$-theory agrees with equivariant $K$-theory \cite[Theorem 1.8]{ThomasonDuke56}.
Thus the claim follows immediately from \aref{cor:DQA-Psh} and homotopy invariance for equivariant $G$-theory \cite[Corollary 4.2]{Thomason:GK}.
\end{proof} 
Write $K_{q}^{G}(-)_{GNis}$ for the sheafification in the equivariant Nisnevich topology of the presheaf $X\mapsto K_{q}^{G}(X)$.
\begin{cor}\label{cor:dss}
With hypothesis as in the previous theorem, there is a strongly convergent descent spectral sequence 
\begin{equation*}
E_{2}^{p,q}
=
H^{p}_{GNis}(X,K_{q}^{G}(-)_{GNis})
\Longrightarrow
K_{q-p}^{G}(X)
\end{equation*}
for any $X$ in $\Sm_{S}^{G}$.
\end{cor}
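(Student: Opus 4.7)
The plan is to realize this as the standard Brown--Gersten descent spectral sequence for a simplicial presheaf satisfying Nisnevich excision. First I would invoke \aref{thm:NIS-DESC1}, which says that under the standing hypotheses (regular base, resolution property) the presheaf $\sK^G$ on $\Sm^G_S$ is equivariant Nisnevich excisive. Then I would choose a local fibrant replacement $\sK^G \to \widehat{\sK^G}$ (in any of the three equivalent local model structures of \aref{thm:LFMS-Psh}). By \aref{prop:Flasq-fib}, this map is a schemewise weak equivalence, so in particular for every $X \in \Sm^G_S$ and every $n$ one has a natural isomorphism $\pi_n \widehat{\sK^G}(X) \iso \pi_n \sK^G(X) = K_n^G(X)$.

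Next, I would apply the descent spectral sequence for a locally fibrant simplicial presheaf, obtained from the Postnikov tower of $\widehat{\sK^G}$ restricted to the small equivariant Nisnevich site of $X$ (equivalently, from a Godement-style cosimplicial resolution on $\Sm^G_{S/{\rm Nis}}$). This has the form
\begin{equation*}
E_2^{p,q} = H^p_{GNis}\bigl(X,\, \pi_q(\widehat{\sK^G})_{GNis}\bigr) \Longrightarrow \pi_{q-p}\widehat{\sK^G}(X).
\end{equation*}
By construction the sheaf $\pi_q(\widehat{\sK^G})_{GNis}$ is the equivariant Nisnevich sheafification of $U \mapsto \pi_q\widehat{\sK^G}(U)$, and by the schemewise equivalence above this equals the sheafification $K_q^G(-)_{GNis}$. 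Combined with the isomorphism $\pi_{q-p}\widehat{\sK^G}(X) \iso K_{q-p}^G(X)$, this gives the $E_2$-page and abutment as stated.

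Finally, strong convergence follows from a uniform bound on cohomological dimension. By \aref{cor:Nis-Dim}, any sheaf of abelian groups $\sF$ on $\Sch^G_{S/{\rm Nis}}$ (hence on $\Sm^G_{S/{\rm Nis}}$) satisfies $H^p_{GNis}(X, \sF) = 0$ for $p > \dim(X)$. Therefore $E_2^{p,q} = 0$ outside the vertical strip $0 \le p \le \dim(X)$, and at each fixed total degree $n = q-p$ only finitely many $E_2^{p,q}$ can contribute; standard criteria (e.g.\ a half-plane spectral sequence with finite horizontal width) then give strong convergence.

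The main obstacle I anticipate is a fringe-type subtlety: since $\sK^G$ is modeled as a simplicial presheaf rather than a presheaf of spectra, the Postnikov/descent spectral sequence is a priori only well-behaved in non-negative stems, and one must check carefully that the edge terms agree with $K_{q-p}^G(X)$ and that there is no indeterminacy at the fringe. The finite cohomological dimension from \aref{cor:Nis-Dim} eliminates this issue, since the spectral sequence occupies a bounded horizontal strip and the abutment at each total degree involves only finitely many filtration steps. If one wanted the statement for negative $K$-groups as well, one would pass to a spectrum refinement of $\sK^G$, but the argument scheme is identical.
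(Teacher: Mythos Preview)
Your proposal is correct and follows essentially the same approach as the paper: construct the Brown--Gersten/Jardine descent spectral sequence, use equivariant Nisnevich excision of $\sK^G$ to identify the abutment with $K^G_{q-p}(X)$, and invoke \aref{cor:Nis-Dim} for strong convergence. The paper compresses this into a reference to \cite[Section~6.1]{Jardine:genet} and cites \aref{cor:M-Uns-Rep1} for the target identification, whereas you unpack the argument via \aref{prop:Flasq-fib}; your route is arguably cleaner since identifying the abutment only needs local (not $\A^1$-local) fibrancy.
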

\begin{proof}
The construction of this spectral sequence is exactly as in 
\cite[Section 6.1]{Jardine:genet}. By \aref{cor:M-Uns-Rep1}, the target of the spectral sequence is as displayed. Convergence of the spectral sequence  follows from \aref{cor:Nis-Dim}.
\end{proof}

We note that in some cases we can deduce that equivariant algebraic $K$-theory is equivariant Nisnevich excisive on non-smooth schemes. 
\begin{theorem}
Suppose that $S=k$ is a field. Let $G$ be a finite group of order coprime to ${\rm char}(k)$. 
Then  $\sK^G$ is equivariant Nisnevich excisive on the category of quasi-projective $G$-schemes.
\end{theorem}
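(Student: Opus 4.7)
The plan is to parallel the proof of \aref{thm:NIS-DESC1}, replacing the equivariant $G$-theory localization with a localization theorem for the $K$-theory of equivariant perfect complexes on quasi-projective $G$-schemes. Given a distinguished equivariant Nisnevich square as in \eqref{eqn:cd-square}, the Thomason--Trobaugh-style localization should yield fibration sequences
$$
\sK^G_{X \setminus A}(X) \to \sK^G(X) \to \sK^G(A),
\qquad
\sK^G_{Y \setminus B}(Y) \to \sK^G(Y) \to \sK^G(B),
$$
together with a vertical map between them induced by the \'etale morphism $p$. To conclude the square is homotopy cartesian, it then suffices to show that the left vertical map $p^* : \sK^G_{X \setminus A}(X) \to \sK^G_{Y \setminus B}(Y)$ is a weak equivalence.

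The first ingredient is that for a finite group $G$ of order coprime to $\Char(k)$ acting on a quasi-projective $G$-scheme $X$, the resolution property holds by Thomason's theorem, even in the singular setting. Consequently the inclusion of the exact category of equivariant vector bundles into the category of equivariant perfect complexes induces a weak equivalence on $K$-theory by Waldhausen's approximation theorem, so that $\sK^G(X)$ can be computed equally well by either. This identification is what allows us to pass to the Thomason--Trobaugh framework, in which localization and excision are available on arbitrary quasi-projective schemes rather than just regular ones.

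The main obstacle will be verifying the excision statement $p^* : \sK^G_{X \setminus A}(X) \simeq \sK^G_{Y \setminus B}(Y)$ in the equivariant setting. In the non-equivariant case this follows from Thomason--Trobaugh, using that $p$ restricts to an isomorphism $(Y\setminus B)_{\red} \iso (X\setminus A)_{\red}$ together with nilpotent invariance of $K$-theory and an \'etale descent argument on the categories of perfect complexes supported on the closed complements. The equivariant refinement should follow by tracking group actions carefully through this argument; the coprime-order hypothesis is essential to ensure that equivariant resolutions of perfect complexes behave well, and that the standard Thomason--Trobaugh gluing and splitting constructions retain their $G$-equivariant structure. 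Once these two ingredients are in place, the proof of \aref{thm:NIS-DESC1} transcribes verbatim.
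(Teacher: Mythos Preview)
Your approach differs from the paper's, which does not touch Thomason--Trobaugh at all: instead it passes to quotient stacks, observes that the coprimality hypothesis makes $[X/G]$ a tame Deligne--Mumford stack admitting a coarse moduli scheme, and then cites the Nisnevich descent theorem for $K$-theory of such stacks from \cite{AO}. That proof is three lines, with the real work outsourced. Your direct route via equivariant perfect complexes is a legitimate alternative and is essentially what \cite{AO} carries out under the hood, but you have misidentified where coprimality enters: the resolution property for a finite group acting on a quasi-projective $k$-scheme holds with no restriction on $|G|$ (replace an ample line bundle by the tensor product of its $G$-translates), and the Thomason--Trobaugh excision argument for categories of perfect complexes with support is equally insensitive to $|G|$. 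In the paper's argument, coprimality is used solely to guarantee tameness of $[X/G]$ so that \cite{AO} applies; your outline, if it can be completed, would actually drop that hypothesis. The one technical wrinkle you should address explicitly is that Thomason--Trobaugh localization is a statement about non-connective $K$-theory, and the passage back to the connective presheaf $\sK^G$ for the distinguished square requires a short extra argument.
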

\begin{proof}
We need to see that $\sK^{G}$ converts any equivariant distinguished square (\ref{eqn:cd-square}),  in the category of quasi-projective $G$-schemes,
into a homotopy cartesian square.
Our assumption implies that $[Y/G] \to [X/G]$ is a representable morphism of 
tame Deligne-Mumford stacks which admit coarse moduli schemes. 
Hence the theorem follows from \cite[Corollary~3.8]{AO}.
\end{proof}

\subsection{Equivariantly contractible smooth affine curves}\label{subsection:ECAC}
We shall say that a motivic $G$-space $\sX$ is {\sl equivariantly $\A^1$-contractible} if the map $\sX \to pt$ is a motivic weak equivalence. 
A $G$-equivariant vector bundle $\sV$ on $X\in\Sm^G_k$ is called trivial if there is a $G$-representation $V$ such that $\sV=V \times_{k}\ X$.

As an application of the representability of equivariant algebraic $K$-theory, we prove the following geometric result on equivariant vector bundles.

\begin{thm}\label{thm:Curves} 
Let $k$ be an infinite field and let $G = \<\sigma\>$ be a finite cyclic group of order prime to the characteristic of $k$ such that $\mu_{|G|}\subset k$.
Let $X$ be a smooth affine curve over $k$ with $G$-action. 
Then $X$ is equivariantly $\A^1$-contractible if and only if it is isomorphic to a $1$-dimensional linear representation of $G$. 
In particular, 
all $G$-equivariant vector bundles on $X$ are trivial if $X$ is equivariantly $\A^1$-contractible. 
\end{thm}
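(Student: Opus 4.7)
My plan is to first dispose of the easy direction and then prove the converse in three stages. If $X$ equals a $1$-dimensional linear representation $V$ of $G$, then the structure map $V\to\Spec(k)$ is a $G$-equivariant vector bundle, so by \aref{prop:Hom-inv} it is a motivic weak equivalence and $V$ is equivariantly $\A^1$-contractible. For the converse, assume $X$ is equivariantly $\A^1$-contractible. I will (1) identify the underlying scheme of $X$ as $\A^1_k$, (2) linearize the $G$-action, and (3) deduce that all equivariant vector bundles on $X$ are trivial; together these give both assertions of the theorem.

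For step (1), forgetting equivariance preserves motivic weak equivalences, so $X$ is non-equivariantly $\A^1$-contractible. \aref{cor:M-Uns-Rep1} yields $K_0^G(X) \iso R(G)$; forgetting the $G$-action then gives $K_0(X) \iso \Z$ and hence $\Pic(X) = 0$. Combined with the existence of a $k$-rational point (coming from $\A^1$-contractibility over the infinite field $k$) and the classification of smooth affine curves (ruling out positive-genus open subsets and twisted forms of $\A^1_k$), one concludes $X \iso \A^1_k$ as a $k$-scheme. For step (2), any $G$-action on $\A^1_k = \Spec(k[t])$ takes the form $\sigma(t) = at + b$ for some $a\in k^{\times}$ and $b\in k$, and the relation $\sigma^n = \id$ forces $a^n = 1$ and $(1+a+\cdots+a^{n-1})b = 0$. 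If $a\neq 1$, the substitution $t \mapsto t - b/(1-a)$ produces the linear action $\sigma(t) = at$; if $a = 1$, then $\sigma^n(t) = t + nb$, and the coprimality of $n$ with $\mathrm{char}(k)$ forces $b = 0$. In either case $X$ is equivariantly isomorphic to a $1$-dimensional representation $V$ of $G$.

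For step (3), the hypothesis $\mu_n \subset k$ provides an eigenspace decomposition that endows $k[t]$ with a $\widehat{G}$-grading, and $G$-equivariant vector bundles on $V$ correspond to finitely generated $\widehat{G}$-graded projective $k[t]$-modules. The graded Quillen--Suslin theorem (equivalently, the equivariant Serre problem for a linearly reductive group acting linearly on affine space) implies such modules are graded free, so every equivariant vector bundle on $V$ is a direct sum of characters of $G$, hence trivial. The principal obstacle I expect is step (1): extracting $X \iso \A^1_k$ from mere $\A^1$-contractibility relies on the nontrivial classification of smooth affine $\A^1$-contractible curves over an infinite field (due to Morel and to work of Asok--Doran). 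Steps (2) and (3) are then essentially routine, modulo invoking the equivariant extension theorem for vector bundles on a representation.
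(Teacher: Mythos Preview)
Your strategy is correct but diverges from the paper's in steps (1) and (2). For step (1), the paper never passes to non-equivariant $\A^1$-contractibility; it stays on the equivariant side and uses $K^G_i(X)\iso K^G_i(k)$ (from \aref{cor:M-Uns-Rep1}) together with Thomason's isomorphism $K^G_i(k)\otimes_{R(G)}\Z\iso K_i(k)$ and Vistoli's surjectivity of $K^G_i(X)\otimes_{R(G)}\Z\to K_i(X)$ after inverting $|G|$. From $i=0$ it deduces that $\Pic(X)$ is $|G|$-torsion (forcing rationality over the infinite field $k$), and from $i=1$ that $\sO^\times(X)[1/|G|]=k^\times[1/|G|]$ (forcing $X\iso\A^1$ inside $\P^1$). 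Your route via the forgetful functor and the external classification of $\A^1$-contractible smooth curves is legitimate, but two points deserve care. First, the assertion that forgetting equivariance preserves motivic weak equivalences needs justification not supplied in the paper (one checks that left Kan extension along the forgetful $\Sm^G_k\to\Sm_k$ is left Quillen, since equivariant Nisnevich squares forget to ordinary ones and $\A^1$-projections to $\A^1$-projections). Second, your sentence deducing $K_0(X)\iso\Z$ from $K_0^G(X)\iso R(G)$ by ``forgetting the $G$-action'' is misleading: the forgetful map $K_0^G(X)\to K_0(X)$ need not be surjective; the conclusion follows instead from non-equivariant representability once you have non-equivariant contractibility, making the detour through $K_0^G$ redundant. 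The paper's approach trades your black-box curve classification for explicit $K$-theoretic input (Thomason, Vistoli); yours is shorter if one is willing to import that classification.

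For step (2), your change-of-coordinates argument is more elementary and in fact cleaner than the paper's: the paper asserts that $b\neq 0$ forces a fixed-point-free action and then derives a contradiction via equivariant $\pi_0^{G,\A^1}$, which as literally written overlooks the case $a\neq 1$ (where $b/(1-a)$ is a fixed point). Your algebraic treatment handles both cases uniformly using only the coprimality hypothesis. Step (3) is the same in substance; the references the paper cites amount to the equivariant Quillen--Suslin statement you invoke.
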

\begin{proof}
The assertion that a finite-dimensional representation is equivariantly $\A^1$-contractible follows from \aref{prop:Hom-inv}.  
Below we prove the more difficult converse statement.

Suppose that $X$ is equivariantly $\A^1$-contractible.
Since the $G$-action on a smooth scheme is linearizable, 
we can assume there is smooth projective curve $\ov{X} \in \Sm^G_k$ and an open embedding $j:X \inj \ov{X}$ in $\Sm^G_k$. 
Let $f: X \to \Spec(k)$ denote the structure map. 

{\bf {Claim~1:}}
The curve $X$ is rational.
\\
{\sl Proof of claim 1:}
Consider the commutative diagram
\begin{equation}\label{eqn:Curve-1}
\xymatrix@C1.3pc{
K^G_i(k) {\underset{R(G)}\otimes} \ \Z \ar[d]_{f^*} \ar[r]^>>>{f^i_k} & 
K_i(k) \ar[d]^{f^*} \\
K^G_i(X) {\underset{R(G)}\otimes} \ \Z \ar[r]_>>>{f^i_X} & K_i(X)} 
\end{equation}
with forgetful horizontal maps from equivariant to ordinary $K$-theory. 
\aref{cor:M-Uns-Rep1} shows the left vertical arrow is an isomorphism for all $i \ge 0$.
The top horizontal arrow is an isomorphism for all $i \ge 0$ by \cite[Lemma~5.6]{Thomason:trace}. 
Applying these facts for $i=0$, 
we see that the composite map 
$$
K^G_0(X) {\otimes_{R(G)}} \ \Z \to K_0(X) \to \Z
$$ 
is an isomorphism. 
On the other hand, the first map is surjective over $\Z[{1}/{|G|}]$ by \cite[Theorem~1]{Vistoli:finite}.
It follows that $\Pic(X)$ is a torsion group of exponent $|G|$, 
which can happen if and only if $X$ is rational. 
This proves the claim.

{\bf{Claim~2:}}
$X$ is isomorphic (not necessarily equivariantly) to $\A^1$.
\\
{\sl Proof of claim 2:}
Claim~1 implies that $\ov{X} \simeq \P^1_k$.
Inserting $i = 1$ in~\eqref{eqn:Curve-1} shows the composite map 
$$
K^G_1(X) {\otimes_{R(G)}} \ \Z \xrightarrow{f^1_X} K_1(X)\surj \sO^{\times}(X)
$$ 
is just the inclusion $k^{\times} \inj \sO^{\times}(X)$. 
On the other hand,
$f^1_X$ is surjective over $\Z[{1}/{|G|}]$ by \cite[Theorem~1]{Vistoli:finite}. 
It follows that $k^{\times}[{1}/{|G|}] \simeq\sO^{\times}(X)[{1}/{|G|}]$, 
which can happen if and only if $X \simeq \A^1$ as an open subscheme of $\P^1_k$. 

By the above claims, $X$ is the affine line with $G= \<\sigma\>$-action $\sigma(x)=ax+b$ for some fixed $a,b\in k$ with $a^{|G|} = 1$. 
If $b \neq 0$, then $\sigma$ acts on $\A^1$ without fixed points.
This means that the identity map of $\A^1$ gives an element of $[\A^1, \A^1]_{G, \A^1}$ which can not be equivariantly contracted 
to any fixed point.
In particular, 
$\pi^{G, \A^1}_0(X)$ is not constant and hence $X \to \Spec(k)$ is not a motivic weak equivalence, 
which contradicts our assumption. 
We conclude that $b=0$ and $G$ acts linearly on $\A^1$. 

Finally, the claim about the triviality of all $G$-equivariant vector bundles on $X$ follows from the above combined with \cite{Cast} 
and \cite[Theorem~1]{MMP}.
\end{proof}

\begin{exm}\label{exm:Non-contractible}
\aref{thm:Curves} shows that equivariant $\A^1$-contractibility is a strictly stronger condition than ordinary $\A^1$-contractibility, 
as one would expect.
As an example, 
let the cyclic group of order two $G=\<\sigma\>$ act on $\A^1$ by $\sigma(x)=1-x$.
This action is fixed point free and hence not isomorphic to a $G$-representation. 
Thus $\A^1$ equipped with this action is not equivariantly $\A^1$-contractible.
\end{exm}

\begin{remk}\label{remk:High-dim}
One can ask whether the assertion of \aref{thm:Curves} is true in higher dimension as well. 
This seems to be a very difficult question. 
We do not know the answer even when $G$ is trivial and $X$ is a surface.
That is, it is unknown whether an $\A^1$-contractible smooth affine surface is isomorphic to the affine plane. 
It is known,
however, 
that such surfaces do not admit any non-trivial vector bundles.
\end{remk}

\section{Fixed point Nisnevich descent and rational \texorpdfstring{$K$}{K}-theory}
\label{subsection:H-Nis}
Throughout this section, $G$ is a finite constant group scheme over a field $k$.  Equivariant motivic weak equivalences do not always behave as one might expect from ordinary equivariant homotopy theory. For example, equivariant motivic weak equivalences are not detected by fixed points. To remedy this, 
Herrmann \cite{Herrmann} introduces a variant of the equivariant Nisnevich topology. Unfortunately, as he shows in \cite[Proposition 5.3]{Herrmann}, equivariant algebraic $K$-theory does not satisfy descent in this topology. 
Nonetheless, we show in \aref{thm:ieNrationalizedKtheory} below that equivariant algebraic $K$-theory with rational coefficients does satisfy descent in Herrmann's topology.

\subsection{Comparing sheaf cohomologies}
\label{subsubsection:H-Nis*}

We recall several other Nisnevich-type and \'etale topologies on smooth $G$-schemes  and compare the resulting cohomology groups with coefficients in sheaves of $\Q$-vector spaces.

The following topology was introduced by 
P. Herrmann in \cite{Herrmann}, where it was called the ``$H$-Nisnevich topology''. 
It has also been studied by Ben Williams \cite{Williams}. 
\begin{definition}
A collection $\{Y_{i}\to X\}$ of maps in $\Sm_{k}^{G}$ is a \textsl{fixed point Nisnevich cover} if $\{(Y_{i})^{H}\to X^{H}\}$ is a Nisnevich cover in $\Sm_{k}$ for all subgroups $H\subseteq G$.
\end{definition}

\begin{example}
 The map of \aref{exm:Ison-H-1} is a fixed point Nisnevich cover but, as noted there, is not an equivariant Nisnevich cover.
 \end{example}

Recall (see \aref{section:finite}) that if $X$ is a $G$-scheme and $x\in X$, we write $S_{x}$ for the set-theoretic stabilizer. There is an induced homomorphism
$S_{x}\to {\rm Aut}_{k}(k(x))$ and the scheme-theoretic stabilizer $G_{x}$ is the kernel of this map. 
Replacing set-theoretic stabilizers with scheme-theoretic stabilizers in \aref{prop:nischar} leads to the fixed point Nisnevich topology.
\begin{lemma}[{\cite[Lemma 2.12]{Herrmann}}]
 An equivariant \'etale map $f:Y\to X$ is a fixed point Nisnevich cover if and only if for every $x\in X$ there is a $y\in Y$ such that $f$ induces isomorphisms $k(x)\iso k(y)$ and $G_{y}\iso G_{x}$.
\end{lemma}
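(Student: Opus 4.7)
The plan is to establish the two implications separately, each time exploiting the standard Nisnevich characterization in $\Sm_k$: an \'etale map is a Nisnevich cover precisely when every point of the target has a preimage with isomorphic residue field.

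First I would handle the forward direction. Given $x \in X$, set $H := G_x$, the scheme-theoretic stabilizer. Then $x$ lies in $X^H$ by the very definition of the fixed point scheme. Since $f^H \colon Y^H \to X^H$ is assumed to be a Nisnevich cover in $\Sm_k$, there exists $y \in Y^H$ with $f(y) = x$ and $k(x) \xrightarrow{\sim} k(y)$. The inclusion $y \in Y^H$ gives $H \subseteq G_y$. Conversely, equivariance of $f$ forces $G_y \subseteq G_{f(y)} = G_x = H$, since any scheme-theoretic automorphism fixing $y$ (and acting trivially on $k(y)$) must fix $f(y)$ and act trivially on the subfield $k(x) \hookrightarrow k(y)$. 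Combining these containments yields $G_y = G_x$.

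For the converse, fix a subgroup $H \subseteq G$ and a point $\bar{x}$ of $X^H$, which we identify with a point $x \in X$ satisfying $H \subseteq G_x$. By hypothesis, there is $y \in Y$ with $f(y) = x$, $k(x) \xrightarrow{\sim} k(y)$, and the natural inclusion $G_y \hookrightarrow G_x$ an isomorphism. Then $H \subseteq G_x = G_y$, so $y$ lies in $Y^H$ and maps to $\bar{x}$ with matching residue field. Since $f$ is \'etale and equivariant and the group is finite, the induced map $f^H \colon Y^H \to X^H$ on fixed-point subschemes is again \'etale, so the residue field matching suffices to exhibit $f^H$ as a Nisnevich cover in $\Sm_k$.

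The main technical point, and the only nontrivial one, is the assertion that $f^H$ is \'etale whenever $f$ is equivariant and \'etale; this is a standard fact about actions of finite (constant) groups on smooth schemes, and plays the role of the corresponding non-equivariant fact used implicitly throughout \aref{section:CD-structure}. The remainder of the argument is a direct translation between the scheme-theoretic stabilizer condition and containment in the appropriate fixed-point locus, making both implications essentially formal once this input is granted.
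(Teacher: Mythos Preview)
The paper does not prove this lemma; it merely records the statement and cites \cite[Lemma 2.12]{Herrmann} for the proof, so there is nothing in the paper to compare against.

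That said, your argument is correct and is the natural one. The identification of points of $X^H$ with points $x\in X$ satisfying $H\subseteq G_x$ (with the same residue field, since $X^H\hookrightarrow X$ is a closed immersion) is exactly what makes both directions go through, and your use of the inclusion $G_y\subseteq G_{f(y)}$ for any equivariant map is the right way to pin down $G_y=G_x$ in the forward direction. The one substantive input you flag---that $f^H$ is \'etale when $f$ is---can be verified cleanly via the infinitesimal lifting criterion: given a square-zero thickening $T\hookrightarrow T'$ over $X^H$ and a $T$-point of $Y^H$, \'etaleness of $f$ produces a unique lift $T'\to Y$, and uniqueness forces this lift to be $H$-fixed since the input data are. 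Alternatively, one can observe that $Y^H$ is open and closed in the \'etale $X^H$-scheme $Y\times_X X^H$. Either way, this is standard and your invocation of it is appropriate.
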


The fixed point Nisnevich covers define a Grothendieck topology on $\Sm^{G}_{k}$.  We write $\Sm^{G}_{k/{\rm fpNis}}$ for the resulting site.

Recall that for a $G$-scheme $X$, the isotropy group scheme is a group scheme
$G_X$ over $X$ defined by the cartesian square 
\begin{equation*}\label{eqn:Isotropy}
\xymatrix{
G_X \ar@{^{(}->}[r] \ar[d] & G \times X \ar[d]^-{(\mu_X, {\rm id}_X)} \\
X \ar@{^{(}->}[r]^-{\Delta_X} & X \times X.}
\end{equation*}

An equivariant map $f:Y\to X$ is said to be \textsl{isovariant} if it induces an isomorphism $G_{Y}\iso G_{X}\times_{X} Y$. 
A collection $\{f_{i}:X_i \to X\}_{i \in I}$ of equivariant maps is called an isovariant \'etale cover if it is an equivariant \'etale cover such that each $f_{i}$ is isovariant. It is called an isovariant Nisnevich cover if it is
an isovariant {\'e}tale cover which is also a Nisnevich cover.
The isovariant {\'e}tale site
on smooth schemes was introduced by Thomason \cite{ThomasonDuke56}
in order
to prove {\'e}tale descent for Bott-inverted equivariant $K$-theory
with finite coefficients. Its Nisnevich analogue was introduced by Serpe
\cite{Serpe} in an attempt to prove descent theorems for equivariant
algebraic $K$-theory with integral coefficients. (However, some of the results 
of loc.~cit.~need amendments.)
Write 
$\Sm^{G}_{k/{\rm isoNis}}$ and 
$\Sm^{G}_{k/{\rm isoEt}}$
for the resulting 
sites. 

To simplify the comparison of sites, we introduce the following topology.
A \textsl{fixed point \'etale cover} is an equivariant \'etale cover  $\{Y_{i}\to X\}$ such that for any $x\in X$ there is an index $i=i(x)$ and $y\in Y_{i}$ such that $G_{y}\iso G_{x}$. We write $\Sm_{k/fpEt}^{G}$ for the resulting site.

Recall that a {\sl continuous map of sites} $f:\sD \to \sC$ is a
functor $f^{-1} : \sC \to \sD$ between Grothendieck sites such that
for every sheaf $F$ on $\sD$,
the presheaf $f_*(F) = F \circ f^{-1}$ is a sheaf on $\sC$.
If in addition $f^*$ commutes with finite limits, then 
it is called a {\sl morphism of sites}. If $f^{-1}$ commutes with fiber products then $f$ is continuous if and only if it preserves covers. If in addition the topology on $\sD$ is sub-canonical, 
then a continuous map of sites is a morphism of sites 
(see e.g.,~\cite[Remarks~1.1.44, 1.1.45]{MV}).

Every isovariant Nisnevich cover is by definition an isovariant \'etale cover. It is also obviously a fixed point Nisnevich cover. 
By \cite[Corollary 2.13]{Herrmann}, every equivariant Nisnevich cover is also a fixed point Nisnevich cover. The identity functor thus yields a commutative diagram of morphisms of sites
$$
\xymatrix{
\Sm^{G}_{k/fpEt} \ar[r]\ar[d]& \Sm^{G}_{k/fpNis}\ar[r]\ar[d] & \Sm^{G}_{k/Nis} \\
\Sm^{G}_{k/isoEt} \ar[r] & \Sm^{G}_{k/isoNis}.
}
$$
In fact, as we now show, the vertical arrows are equivalences of sites and so we do not need to worry too much about the distinction between the fixed point and isovariant topologies.  
The following property plays in important role in the study of quotients by algebraic group actions.
\begin{defn}
 An equivariant map $f:X\to Y$ is said to be \textsl{stabilizer preserving at $x\in X$} if $f$ induces an isomorphism $G_{x}\cong G_{f(x)}$. If this condition holds for all $x\in X$ then $f$ is said to be \textsl{stabilizer preserving}.
\end{defn}
 
 Note that $f$ is stabilizer preserving if and only if it is isovariant. 
 The notion of a stabilizer preserving map was first introduced by Deligne in unpublished work (see \cite[p.~183]{Knutson}) to prove the existence of a quotient of a separated algebraic space by a finite group and to remedy the problem that an equivariant \'etale map need not induce an \'etale map on the quotients.

\begin{prop}[Rydh]\label{prop:Rydh}
Let $f:X\to Y$ be an equivariant \'etale map. The subset $X_{0}\subseteq X$ of points at which $f$ is stabilizer preserving is an invariant open subset. 
\end{prop}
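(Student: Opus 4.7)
The plan is to realize $X_0$ as the complement of the image of a closed subscheme, the image being closed thanks to finiteness of $G$. First I would exhibit the natural closed immersion $G_X \hookrightarrow G_Y \times_Y X$ inside $G \times X$: if a scheme-theoretic point $(g,x)$ satisfies $g\cdot x = x$, then equivariance of $f$ gives $g\cdot f(x) = f(gx) = f(x)$, so $G_X$ sits inside $G_Y \times_Y X$ as closed subschemes of $G\times X$. By definition, $f$ is stabilizer preserving at $x$ precisely when this closed immersion is an isomorphism on the fibers over $x$.

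The main geometric step is to show that this closed immersion is in fact also an open immersion, and this is where etaleness of $f$ enters. Consider the morphism
\[
\sigma \colon G_Y \times_Y X \to X \times_Y X, \qquad (g,x) \mapsto (x, gx),
\]
which is well-defined because on $G_Y \times_Y X$ one has $f(gx) = g\cdot f(x) = f(x)$. A direct computation shows $\sigma^{-1}(\Delta_f(X)) = G_X$. Since $f$ is \'etale, the diagonal $\Delta_f \colon X \to X \times_Y X$ is an open immersion, and so $G_X$ is open in $G_Y \times_Y X$. Combined with the previous paragraph, $G_X \hookrightarrow G_Y \times_Y X$ is clopen. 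This is the hard part: once openness is in hand, the rest is routine.

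Let $W := (G_Y \times_Y X) \setminus G_X$ be the complementary clopen subscheme; it is closed in $G_Y \times_Y X$ and hence closed in $G \times X$. Because $G$ is a finite constant group scheme over $k$, the projection $p \colon G \times X \to X$ is finite, hence a closed map, so $p(W) \subseteq X$ is closed. Now a point $x \in X$ lies in $X_0$ if and only if the fiber $W_x$ is empty, i.e.\ if and only if $x \notin p(W)$. Therefore $X_0 = X \setminus p(W)$ is open.

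Finally, $G$-invariance of $X_0$ is essentially formal: for $h \in G$ and $x \in X$, equivariance of $f$ yields $G_{hx} = h G_x h^{-1}$ and $G_{f(hx)} = h G_{f(x)} h^{-1}$, and the natural map $G_{hx} \to G_{f(hx)}$ is obtained from $G_x \to G_{f(x)}$ by conjugation with $h$. Conjugation is an isomorphism, so the stabilizer-preserving property is preserved by the action of $h$; equivalently, $X_0$ can be described as $X \setminus p(W)$ with $W$ manifestly $G$-stable under the diagonal action on $G \times X$.
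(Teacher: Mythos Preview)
Your proof is correct and follows essentially the same approach as the paper: you identify $G_X$ as the preimage of the open diagonal $\Delta_f(X)\subseteq X\times_Y X$ under the map $\sigma:G_Y\times_Y X\to X\times_Y X$, which is exactly the left cartesian square the paper writes down, and then use finiteness/properness of the projection to $X$ to conclude that the complement of $G_X$ has closed image. The only cosmetic differences are that the paper phrases the closedness of the image via properness of $G_Y\times_Y X\to X$ (base-changed from $G\times Y\to Y\times Y$) rather than via the ambient projection $G\times X\to X$, and that you spell out the $G$-invariance argument, which the paper leaves implicit.
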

\begin{proof}
This is a special case of \cite[Proposition 3.5]{Rydh}. 
The main point of the argument is that there are cartesian squares
$$
\xymatrix{
G_{X} \ar@{^{(}->}[r]\ar[d] & G_{Y}\times_{Y} X \ar[d]\ar[r] & G\times Y \ar[dd] \\
X \ar@{^{(}->}[r]^-{\Delta} & X\times_{Y} X\ar[d] & \\
& X \ar[r] & Y\times Y.
}
$$
The locus $X_{0}$ of stabilizer preserving points is the complement of the image of $Z:=G_{Y}\times_{Y} X \setminus G_{X}$.
Since $f$ is \'etale, $\Delta$ is an open immersion and so  $Z\subseteq G_{Y}\times_{Y}X$ is closed. Since $G$ is finite, 
$G\times Y\to Y\times Y$ is  proper and
so $G_{Y}\times_{Y}X\to X$ is as well.
Therefore the image of $Z$ in $X$ is closed and so $X_{0}$ is open.
\end{proof}

\begin{cor}\label{cor:ieiso}
The  identity functor induces equivalences of categories 
 \begin{align*}
& \Shv(\Sm^{G}_{k/fpEt})  \to \Shv(\Sm^{G}_{k/isoEt})\\
& \Shv(\Sm^{G}_{k/fpNis})  \to \Shv(\Sm^{G}_{k/isoNis}).
 \end{align*}
In particular $H^{*}_{fpEt}(X,\mathcal{F}) = H^{*}_{isoEt}(X, \mathcal{F})$ and $H^{*}_{fpNis}(X, \mathcal{F}) = H^{*}_{isoNis}(X, \mathcal{F})$ for any $X$ and any $\mathcal{F}$.
 \end{cor}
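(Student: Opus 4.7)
The plan is to reduce both equivalences to the statement that each topology is generated by the other, i.e.\ that every fixed point \'etale (resp.\ fixed point Nisnevich) cover admits a refinement by an isovariant \'etale (resp.\ isovariant Nisnevich) cover. Once this refinement statement is available, a standard comparison lemma for sites (cf.\ \cite[Remarks~1.1.44, 1.1.45]{MV} or SGA 4 III) gives equivalences of the associated sheaf categories, and the cohomology statement is then automatic since the equivalences send a sheaf to itself.

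The easy half is that every isovariant \'etale (resp.\ isovariant Nisnevich) cover is in particular a fixed point \'etale (resp.\ fixed point Nisnevich) cover: if $\{f_i : Y_i \to X\}$ is isovariant \'etale then for each $x \in X$ there exists $i$ and $y \in Y_i$ mapping to $x$, and isovariance forces $G_y \iso G_x$; in the Nisnevich case the hypothesis provides in addition the residue field isomorphism $k(x) \iso k(y)$.

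The substantive half is the converse refinement, and this is where Rydh's \aref{prop:Rydh} does all the work. Given a fixed point \'etale cover $\{f_i : Y_i \to X\}$, let $(Y_i)_0 \subseteq Y_i$ be the invariant open subset of points at which $f_i$ is stabilizer preserving. By construction each restriction $(Y_i)_0 \to X$ is both \'etale and isovariant. I would then check that $\{(Y_i)_0 \to X\}$ is still a fixed point \'etale cover: for $x \in X$, the witness point $y \in Y_i$ with $G_y \iso G_x$ provided by the fixed point cover hypothesis lies in $(Y_i)_0$ by the very definition of the stabilizer preserving locus. Hence $\{(Y_i)_0 \to X\}$ is an isovariant \'etale cover which refines the original. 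The Nisnevich case is identical since the same witness $y$, which also satisfies $k(y) \iso k(x)$, lies in $(Y_i)_0$, so the refinement is simultaneously a Nisnevich cover.

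Putting these two refinements together, the identity functor is a continuous morphism of sites in both directions, so by the comparison lemma the induced adjunction on sheaves is an equivalence in each case. In particular, viewing an abelian sheaf $\sF$ on one site as a sheaf on the other yields canonical isomorphisms $H^{*}_{fpEt}(X, \sF) \iso H^{*}_{isoEt}(X, \sF)$ and $H^{*}_{fpNis}(X, \sF) \iso H^{*}_{isoNis}(X, \sF)$. The only real obstacle is the openness of the stabilizer preserving locus, and this is exactly what \aref{prop:Rydh} supplies; the rest is formal manipulation of Grothendieck topologies.
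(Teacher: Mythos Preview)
Your proof is correct and follows essentially the same approach as the paper: both arguments use \aref{prop:Rydh} to pass to the open invariant stabilizer-preserving locus $(Y_i)_0$, observe that the witness points guaranteed by the fixed point cover hypothesis already lie in this locus, and conclude that the resulting isovariant family refines the original cover. Your write-up is somewhat more explicit about the easy direction and about invoking the comparison lemma, but the substance is the same.
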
 
\begin{proof}
 If $X\to Y$ is a fixed point \'etale cover (resp.~a fixed point Nisnevich cover) let $X_{0}\subseteq X$ be the subset of points at which $f$ is stabilizer preserving, which is an open invariant subset by the previous proposition. By the definition of the fixed point \'etale and Nisnevich topologies, $X_{0}\subseteq X\to Y$ is still surjective and so is a cover in the isovariant \'etale (resp.~ Nisnevich) topology. This implies that every fixed point cover can be refined by an isovariant cover,
which establishes the corollary. 
\end{proof}

Recall the description of the points in the equivariant Nisnevich topology \aref{section:finite} for finite groups. There is a corresponding description of the points of the isovariant \'etale topology, which we now detail.
Let $X$ be a $G$-scheme and $x\in X$ a point. Let $\overline{x}=\Spec(\overline{k(x)})\to x$ be geometric point corresponding to a choice of separable closure. We obtain an equivariant map $G/G_{x}\times\overline{x}\to G\cd x$. For notational convenience we define 
$$
G\cd \overline{x} := G/G_{x}\times \overline{x}.
$$
A fixed point \'etale neighborhood of $G\cd\overline{x}\to X$ is an equivariant \'etale map $V\to X$ together with a map 
$G\cd \overline{x}\to  V$ such that the triangle commutes
$$
\xymatrix{
  &  V \ar[d] \\
G\cd \overline{x}\ar[r]\ar[ur] & X.
}
$$
Write $N'_{G}(G\cd\overline{x})$ for the category of affine fixed point \'etale neighborhoods of $G\cd\overline{x}\to X$.
The strict henselization at a geometric point $\overline{x}\to X$ is
 the limit over affine \'etale neighborhoods $V\to X$ of $\overline{x}\to X$. It is functorial on the category of pairs $(Y,\overline{x})$ consisting of a scheme $Y$ and a geometric point $\overline{x}\to Y$ and morphisms of pairs are maps of schemes which preserve the chosen $\overline{x}$-point. Note that $G_{x}$ acts on the pair 
$(X,\overline{x})$ and thus by functoriality, $G_{x}$ acts on 
$\mathcal{O}_{X,\overline{x}}^{h}$.

\begin{prop}
Let $X$ be a $G$-scheme, $x\in X$ a point  and $\overline{x}\to x$ a geometric point corresponding to a separable closure $k(x)\subseteq \overline{k(x)}$. Then there is a natural isomorphism
$$
\lim_{V\in N_{G}^{'}(G\cd\overline{x})} V \cong 
G\times^{G_{x}}\Spec(\mathcal{O}^{h}_{X,\overline{x}}).
$$
\end{prop}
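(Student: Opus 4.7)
The plan is to mirror the proof of the analogous equivariant Nisnevich statement (that $X^h_{Gx}\cong G\times^{S_x}\Spec(\sO^h_{X,x})$) by exhibiting a cofinal subcategory of $N'_G(G\cd\overline{x})$ consisting of "induced" neighborhoods of the form $G\times^{G_x}W$ with $W\to X$ a $G_x$-equivariant affine étale neighborhood of $\overline{x}$, and then observing that the limit formation commutes with the induction $G\times^{G_x}(-)$ because the latter is, after forgetting the $G$-action, simply $[G:G_x]$ copies of its argument.

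First I would reduce to isovariant neighborhoods. Given $(V,s)\in N'_G(G\cd\overline{x})$ with section $s\colon G\cd\overline{x}\to V$, the section is itself isovariant on its image, so by \aref{prop:Rydh} the stabilizer-preserving locus $V_0\subseteq V$ is an invariant open containing $s(G\cd\overline{x})$. Replacing $V$ by $V_0$ does not change the limit and lets me assume $V\to X$ is isovariant along the section. In particular $s(\overline{x})\in V$ is a $G_x$-fixed geometric point sitting above $\overline{x}\in X$.

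Next I would carve out an induced neighborhood inside $V$. Since $s(\overline{x})$ is $G_x$-fixed and $G_x$ is finite, intersecting any affine open neighborhood of $s(\overline{x})$ with its (finitely many) $G_x$-translates produces a $G_x$-invariant affine open neighborhood $W\subseteq V$ of $s(\overline{x})$; using separatedness of $V$ together with the fact that $G\cd s(\overline{x})$ has trivial self-intersection outside $G_x$, I can further shrink so that $g(W)\cap W=\emptyset$ for $g\notin G_x$. Then $W\to X$ is a $G_x$-equivariant affine étale neighborhood of $\overline{x}$ and the canonical equivariant map $G\times^{G_x}W\to V$ is an open immersion. Hence neighborhoods of the form $G\times^{G_x}W$ are cofinal in $N'_G(G\cd\overline{x})$.

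Finally I would identify the limit of such $W$'s with $\Spec(\sO^h_{X,\overline{x}})$. The translate construction of \aref{section:finite} (applied to the finite group $G_x$ that fixes $\overline{x}$) shows that $G_x$-invariant affine étale neighborhoods of $\overline{x}$ are cofinal in all affine étale neighborhoods of $\overline{x}$: given any such $W'$, the fibre product $W'_{G_x}:=\prod^X_{g\in G_x}g(W')$ is a $G_x$-invariant affine étale neighborhood dominated by $W'$. Therefore
\[
\lim_{W} W \;\cong\; \Spec(\sO^h_{X,\overline{x}}),
\]
equipped with its natural $G_x$-action. Choosing coset representatives identifies $G\times^{G_x}(-)$ with a disjoint union of $[G:G_x]$ copies of its input as a plain scheme, so it commutes with cofiltered limits of affine schemes. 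Assembling the steps gives
\[
\lim_{V\in N'_G(G\cd\overline{x})}V \;=\;\lim_{W}G\times^{G_x}W\;\cong\;G\times^{G_x}\Spec(\sO^h_{X,\overline{x}}),
\]
and the identification is manifestly natural in $(X,\overline{x})$.

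The main technical obstacle will be Step 2: producing the $G_x$-invariant $W\subseteq V$ with disjoint $G/G_x$-translates so that $G\times^{G_x}W\hookrightarrow V$ is genuinely an open immersion. This uses isovariance (from Step 1), finiteness of $G_x$, and separatedness of $V$, and it is the one place where the argument is more subtle than the naive equivariant Nisnevich analogue.
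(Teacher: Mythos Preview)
Your overall strategy---showing that induced neighborhoods $G\times^{G_x}W$ are cofinal in $N'_G(G\cd\overline{x})$ and then computing the limit over those---is sound and gives a correct proof, but it differs from the paper's argument and your Step~2 contains a genuine error that you should be aware of.

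\textbf{The gap.} The claim that one can shrink $W$ so that $g(W)\cap W=\emptyset$ for $g\notin G_x$, and hence that $G\times^{G_x}W\hookrightarrow V$ is an open immersion, is false in general. The obstruction is that the \emph{set-theoretic} stabilizer $S_y$ of the underlying point $y$ of $s(\overline{x})$ may strictly contain the scheme-theoretic stabilizer $G_x$: any $g\in S_y\setminus G_x$ fixes $y$ as a point (while acting nontrivially on $k(y)$), so $y\in g(W)\cap W$ for every open $W\ni y$. A concrete instance is $G=\Z/2$ acting on $X=\Spec(\C)$ by conjugation: here $G_x=\{e\}$ but $S_x=G$, and there is no nonempty open $W\subseteq V=X$ with $\sigma(W)\cap W=\emptyset$. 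Isovariance (your Step~1) controls $G_y$, not $S_y$, so it does not help.

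\textbf{Why the approach still works.} Fortunately the open-immersion claim is unnecessary. For any $G_x$-invariant affine open $W\subseteq V$ containing $y$, the action map $G\times^{G_x}W\to V$ is equivariant, affine, and \'etale (on each of the $[G:G_x]$ components it is the isomorphism $w\mapsto g_iw$ onto an open of $V$), and the section $G\cd\overline{x}=G\times^{G_x}\overline{x}\to G\times^{G_x}W$ is induced by $\overline{x}\to W$. Thus $G\times^{G_x}W\to V$ is a morphism in $N'_G(G\cd\overline{x})$, which is all that cofinality requires. With this correction your Steps~3 and~4 go through unchanged. (In fact Step~1 becomes superfluous: the equivariance of $s$ already forces $G_y=G_x$ at $y$, and isovariance on a neighborhood is not used once you drop the open-immersion claim.)

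\textbf{Comparison with the paper.} The paper takes a different, slicker route: it observes that the inclusion $N'_G(G\cd\overline{x})\subseteq N(G\cd\overline{x})$ into nonequivariant affine \'etale neighborhoods is initial, so the limit equals the ordinary strict henselization of $X$ along $G\cd\overline{x}$; it then compares this with the same limit computed over $G\times^{G_x}X$ in place of $X$, where the answer is visibly $G\times^{G_x}\Spec(\sO^h_{X,\overline{x}})$. Your argument is more explicit about the structure of the cofinal system, at the cost of the subtlety above; the paper's argument avoids that subtlety entirely by never trying to embed induced neighborhoods into $V$.
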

\begin{proof}
One may check that the inclusion $N_{G}^{'}(G\cd\overline{x})\subseteq N(G\cd\overline{x})$ (the category of nonequivariant affine \'etale neighborhoods) is initial.  Therefore we have natural isomorphisms
$$
\lim_{V\in N_{G}^{'}(G\cd\overline{x})}  V \xrightarrow{\cong} 
\lim_{V\in N^{Et}(G\cd\overline{x})} V  \xleftarrow{\cong} 
\lim_{V\in N_{G}'(G\cd\overline{x}\to G\times^{G_{x}}X)} V 
\xleftarrow{\cong} G\times^{G_{x}}\Spec(\mathcal{O}^{h}_{X,\overline{x}}).
$$
\end{proof}

\begin{remark}
 Note that $G\times^{G_x}\Spec(\mathcal{O}^{h}_{X,\overline{x}})$  equals  the limit over isovariant \'etale neighborhoods of $G\cd\overline{x}$ as well.
\end{remark}

As usual, if $F$ is a presheaf and $W = \lim_{i}W_{i}$, then we set 
$F(W) := \colim_{i} F(W_{i})$.  
For each $X$ in $\Sm^{G}_{k}$ and $x\in X$, choose a separable closure $k(x)\subseteq \overline{k(x)}$. This gives rise to the point
$\overline{x}^{*}:\Shv_{isoEt}(\Sm^{G}_{k})\to \Sets$
of the isovariant \'etale topos, defined by 
$\overline{x}^{*}F = F(
G\times^{G_{x}}\Spec(\mathcal{O}_{X,\overline{x}}^{h}))$. 
\begin{prop}
 The set of points $\{\overline{x}^{*}\,|\, x\in X,\, X\in \Sm^{G}_{k}\}$   forms a conservative set of points for $(\Sm^{G}_{k})_{isoEt}$.
\end{prop}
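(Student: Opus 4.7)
The plan is to adapt the proof of Proposition~\ref{prop:Point-Cons} to the isovariant \'etale setting, with the strict henselization $G\times^{G_u}\Spec(\mathcal{O}_{U,\overline{u}}^h)$ playing the role that $X^h_{Gx}$ played in the equivariant Nisnevich case. By \cite[Proposition~6.5.a]{SGA4Tome1}, it suffices to show that whenever $\{f_i:U_i\to U\}_{i\in I}$ is a family of equivariant morphisms in $\Sm^G_k$ such that $\{\overline{x}^*(U_i)\to\overline{x}^*(U)\}_{i\in I}$ is surjective for every $X\in\Sm^G_k$ and every $x\in X$, then $\{f_i\}$ is refined by some isovariant \'etale covering of $U$.

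First, for each $u\in U$, the element of $\overline{u}^*(U)$ determined by the canonical map $G\times^{G_u}\Spec(\mathcal{O}_{U,\overline{u}}^h)\to U$ must lift under the surjectivity hypothesis to some index $i=i(u)\in I$, producing a $G$-equivariant factorization
$$
G\times^{G_u}\Spec(\mathcal{O}_{U,\overline{u}}^h) \xrightarrow{w_u} U_{i(u)} \xrightarrow{f_{i(u)}} U.
$$
Evaluating stabilizers at the distinguished geometric point above $u$ shows that $w_u$ is stabilizer-preserving there, since both source and target have scheme-theoretic isotropy equal to $G_u$.

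Second, I would use the identification of $G\times^{G_u}\Spec(\mathcal{O}_{U,\overline{u}}^h)$ with the cofiltered limit of affine isovariant \'etale neighborhoods of $G\cdot\overline{u}$ in $U$ (as established just before the proposition). Because $f_{i(u)}$ is of finite type, a standard limit argument descends $w_u$ to a $G$-equivariant morphism $V_u\to U_{i(u)}$ at finite level, where $V_u\to U$ is an affine equivariant \'etale neighborhood of $G\cdot\overline{u}$. Proposition~\ref{prop:Rydh} then allows me to replace $V_u$ by the open invariant subscheme on which $V_u\to U$ is stabilizer-preserving, so that $V_u\to U$ becomes an isovariant \'etale neighborhood. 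The collection $\{V_u\to U\}_{u\in U}$ is then an isovariant \'etale covering of $U$ that refines $\{f_i\}$, as each $u\in U$ lies in the image of some $V_u$ via a point with matching residue field and scheme-theoretic stabilizer.

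The main obstacle is the descent step: one must transfer the factorization $w_u$ from the pro-object $G\times^{G_u}\Spec(\mathcal{O}_{U,\overline{u}}^h)$ down to a genuine finite-type isovariant \'etale neighborhood. This requires combining the standard limit compatibility for finite-type morphisms with Rydh's openness result (Proposition~\ref{prop:Rydh}) to guarantee that the stabilizer-preserving property---which a priori is only verified at a single geometric point---propagates to an open equivariant neighborhood. Once this is in place, the remainder of the argument parallels the equivariant Nisnevich case verbatim.
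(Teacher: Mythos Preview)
Your proposal is correct and follows exactly the approach the paper indicates (the paper's proof is simply ``Straightforward and similar to the argument in \aref{prop:Point-Cons} for the Nisnevich topology''). One minor imprecision: the map you want to be stabilizer-preserving at the distinguished point is the composite $f_{i(u)}\circ w_u$ (equivalently, after descent, $V_u\to U$), not $w_u$ itself---the target $U_{i(u)}$ need not have isotropy $G_u$---but your second paragraph handles this correctly via \aref{prop:Rydh}, and since the limit can already be taken over isovariant \'etale neighborhoods (as remarked just before the proposition) that step is in fact redundant.
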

\begin{proof}
 Straightforward and similar to the argument in \aref{prop:Point-Cons} for the Nisnevich topology.
\end{proof}

Since $G$ is finite, a geometric quotient $X/G$ always exists as a separated algebraic space over $k$ 
(see e.g., \cite[Corollary 5.4]{Rydh}). 
Moreover, one sees from \cite[Theorem 2.14]{Kollar} or \cite[Corollary 5.4]{Rydh}
that if $V\to X$ is an \'etale, stabilizer preserving morphism then $V/G\to X/G$ is \'etale and the following square is cartesian
$$
\xymatrix{
V\ar[r]\ar[d] & X \ar[d] \\  
V/G\ar[r] & X/G.  
}
$$
If $U\to X/G$ is a separated, \'etale morphism then $U\times_{X/G} X\to X$ is an \'etale, separated morphism from an algebraic space to a scheme and so $U\times_{X/G}X$ is also a scheme. In particular $\pi:X\to X/G$ induces a functor $\pi^{-1}:(X/G)_{Et}\to X_{isoEt}$ given by $U\mapsto U\times_{X/G}X$. 
Recall (see \aref{section:finite}) that $X^{h}_{Gx}$ is the limit over equivariant Nisnevich neighborhoods of the orbit $G\cd x$. 

\begin{prop}\label{prop:hiso}
 Suppose that the geometric quotient $X\to X/G$ exists as a scheme. Then there is a natural isomorphism $(X^{h}_{Gx})/G \iso (X/G)^{h}_{x}$.
\end{prop}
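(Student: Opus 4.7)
My plan is to establish a natural equivalence between the categories indexing the two henselizations, and then to commute taking $G$-quotients with filtered limits of affine schemes. Write $\pi\colon X \to X/G$ for the quotient map and set $\bar x = \pi(x)$.

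First I would refine the indexing category $N_G(G\cd x)$ of affine equivariant Nisnevich neighborhoods of the orbit. By \aref{prop:Rydh}, for any $(U,\, s\colon G\cd x \to U)$ in $N_G(G\cd x)$ the subset $U_0 \subseteq U$ of stabilizer preserving points is an invariant open. It contains $s(G\cd x)$, since $s$ is an equivariant section of the \'etale map $U \to X$ and hence realizes $s(G\cd x)$ as points of $U$ whose scheme-theoretic isotropy equals that of the corresponding points of $X$. Thus restriction $U \mapsto U_0$ realizes the full subcategory $N_G^{sp}(G\cd x) \subseteq N_G(G\cd x)$ of stabilizer-preserving equivariant Nisnevich neighborhoods as cofinal, and $X^h_{Gx} = \lim_{U \in N_G^{sp}(G\cd x)} U$.

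Second, for each $U$ in $N_G^{sp}(G\cd x)$, the assumption that $X/G$ exists as a scheme and the Koll\'ar--Rydh descent recalled just before the statement produce an \'etale map $U/G \to X/G$ together with a cartesian square
\begin{equation*}
\xymatrix{
U \ar[r] \ar[d] & X \ar[d]^{\pi} \\
U/G \ar[r] & X/G.
}
\end{equation*}
The equivariant section $s$ descends to a section $\bar s\colon \bar x \to U/G$ of $U/G \to X/G$ over $\bar x$, and the residue field isomorphism induced by $s$ along $G\cd x$ gives $k(\bar x) \iso k(\bar s(\bar x))$. Hence $(U/G,\, \bar s)$ is an affine Nisnevich neighborhood of $\bar x$ in $X/G$, and I would verify that the assignment $U \mapsto U/G$ gives an equivalence
\[
N_G^{sp}(G\cd x) \xrightarrow{\ \iso\ } N(\bar x)
\]
with quasi-inverse $V \mapsto V\times_{X/G} X$. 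That the pullback lands in $N_G^{sp}(G\cd x)$ uses that pullback along $\pi$ of an \'etale map preserves the stabilizer-preserving property; the two compositions are naturally isomorphic to the identities via the cartesian square above and the identity $(V\times_{X/G} X)/G = V$, which in affine coordinates is the flat base change $(R \otimes_B A)^G = R \otimes_B A^G = R$ for $X=\Spec(A)$, $X/G = \Spec(B)$, $V = \Spec(R)$.

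Third, I conclude by commuting quotient with limit. The category $N_G^{sp}(G\cd x)$ is cofiltered, its objects are affine and its transition morphisms are affine and equivariant, so $X^h_{Gx} = \Spec(A)$ with $A = \colim_U \sO(U)$. Since $G$ is finite, taking $G$-invariants commutes with filtered colimits of rings, so
\[
(X^h_{Gx})/G = \Spec(A^G) = \Spec\bigl(\colim_U \sO(U)^G\bigr) = \lim_U \Spec(\sO(U)^G) = \lim_U (U/G) = (X/G)^h_{\bar x},
\]
where the last equality uses the equivalence of the previous step to rewrite the limit as one over $N(\bar x)$. The main obstacle is the middle step: verifying that $U \mapsto U/G$ and $V \mapsto V \times_{X/G} X$ are mutually quasi-inverse, which depends crucially on Koll\'ar--Rydh descent of stabilizer-preserving \'etale maps along $\pi$ together with the hypothesis that $X/G$ is schematic, so that the quotients of the neighborhoods $U$ remain schemes and transition maps in $N_G^{sp}(G\cd x)$ descend correctly.
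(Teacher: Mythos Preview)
Your proposal is correct and follows essentially the same approach as the paper: both reduce to stabilizer-preserving equivariant Nisnevich neighborhoods via \aref{prop:Rydh}, use Koll\'ar--Rydh descent to pass to \'etale neighborhoods of $[x]$ in $X/G$, and observe that $U\mapsto U/G$ and $V\mapsto V\times_{X/G}X$ are mutually inverse between the two indexing categories. The paper leaves the final identification of limits as ``straightforward to check,'' whereas you make this explicit by commuting $G$-invariants with the filtered colimit of coordinate rings; this extra step is a welcome clarification but not a different strategy.
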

\begin{proof}
 Write $[x]\in X/G$ for the image of $x$ under the quotient map. Any equivariant Nisnevich neighborhood $f:V\to X$ of $x\in X$ is 	stabilizer preserving at $x$ and so by \aref{prop:Rydh} we may assume it is stabilizer preserving. Therefore $f$ induces an \'etale morphism $f/G:V/G\to X/G$. The section $G\cd x\to V$ induces a section $[x]=(G\cd x)/G\to V/G$ and so $f/G$ is a Nisnevich neighborhood of $[x]$. On the other hand, if $W\to X/G$ is a Nisnevich neighborhood of $[x]$ then $W\times_{X/G}X\to X$ is an equivariant Nisnevich neighborhood of $G\cd x$. It is straightforward to check that these processes are inverse to each other and yield the isomorphism of the proposition.
\end{proof}

\begin{prop}[Thomason]\label{Quotient}
 Let $X$ be a $G$-scheme over $S$ and $\pi:X\to X/G$ be the geometric quotient in algebraic spaces over $k$. 
 Then $ \pi^{-1}:(X/G)_{Et}\to X_{isoEt}$ 
 is
 an equivalence of sites. 
\end{prop}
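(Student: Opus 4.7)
The plan is to construct an explicit quasi-inverse functor to $\pi^{-1}$ using the formation of geometric quotients, and then verify that both functors are continuous with the covering families corresponding under the equivalence. Define $\pi_{*} : X_{isoEt} \to (X/G)_{Et}$ on objects by $\pi_{*}(V) = V/G$ (and by the induced map on morphisms). This is well-defined: since $G$ is finite and $V \to X$ is isovariant étale, the geometric quotient $V/G$ exists as a separated algebraic space by \cite{Rydh}, and by the facts recalled in the paragraph preceding \aref{prop:hiso} (drawn from \cite{Kollar} and \cite{Rydh}), $V/G \to X/G$ is étale and the square
\begin{equation*}
\xymatrix{
V \ar[r]\ar[d] & X \ar[d]\\
V/G \ar[r] & X/G
}
\end{equation*}
is cartesian.

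The cartesianness of this square is precisely the statement $\pi^{-1}\pi_{*}(V) \cong V$, natural in $V$. For the other composite, I would show that for any étale $U \to X/G$ the pullback $U \times_{X/G} X$ with its $G$-action from the second factor has geometric quotient $U$ itself; that is, $\pi_{*}\pi^{-1}(U) \cong U$. This follows because the formation of the geometric quotient $\pi : X \to X/G$ is compatible with flat base change (and étale, in particular flat, maps $U \to X/G$ pull back to isovariant étale maps by construction, so $\pi_{*}$ applied to the pullback recovers $U$).

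To upgrade this equivalence of underlying categories to an equivalence of sites, I would check that $\pi^{-1}$ and $\pi_{*}$ take covers to covers. Given an étale cover $\{U_{i} \to U\}$ in $(X/G)_{Et}$, each $U_{i} \times_{X/G} X \to U \times_{X/G} X$ is isovariant étale (isovariance is preserved by equivariant base change), and the family remains jointly surjective because base change along the faithfully flat map $X \to X/G$ preserves surjectivity. Conversely, for an isovariant étale cover $\{V_{i} \to V\}$ the quotients $\{V_{i}/G \to V/G\}$ are étale by the quoted result, and jointly surjective since $V \to V/G$ is surjective. Combined with the compatibility of $\pi^{-1}$ with fibre products (which is immediate since it is a pullback functor), this shows $\pi^{-1}$ is a morphism of sites whose inverse is $\pi_{*}$.

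The main obstacle is the technical input on quotients of isovariant étale morphisms: namely that the formation of $V/G$ is étale over $X/G$ and that the quotient square is cartesian. These are non-trivial facts about the tame/stabilizer-preserving setting (Deligne–Kollár–Rydh), but once they are in hand the argument is essentially formal. This is also the reason the isovariance hypothesis cannot be dropped: without stabilizer preservation, the map $V/G \to X/G$ need not be étale and the proposition fails.
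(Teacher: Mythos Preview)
Your proposal is correct and follows essentially the same approach as the paper: define the quasi-inverse $V\mapsto V/G$ and use the facts about quotients of stabilizer-preserving \'etale maps (recalled just before \aref{prop:hiso}) to verify it is well-defined and inverse to $\pi^{-1}$. The paper's own proof is a two-line sketch citing Thomason and the preceding discussion; you have simply spelled out the details of both composites and the cover-preservation that the paper leaves implicit.
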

\begin{proof}
 The proof is similar to that of 
 \cite[Proposition 2.17]{ThomasonDuke56}.  Define a functor $X_{iso-et}\to (X/G)_{et}$ 
 by $V\mapsto V/G$. The discussion above shows that this is  well-defined and is an inverse to $\pi^{-1}$.
 \end{proof}

\begin{cor}\label{cor:Hettorsion}
 Let $X$ be an affine $G$-scheme over $k$, $x\in X$,   and $\mathcal{F}$ a sheaf of abelian groups in the isovariant \'etale topology on $X$.
 Then $H^{p}_{isoEt}(X^{h}_{Gx}, \mathcal{F})$ is torsion for any $p>0$.
\end{cor}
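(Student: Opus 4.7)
The strategy is to use Thomason's equivalence (Proposition \ref{Quotient}) to reduce isovariant \'etale cohomology on $X^h_{Gx}$ to ordinary \'etale cohomology on a henselian local scheme, and then invoke classical Galois-theoretic descent. Since $X$ is affine and $G$ is finite, the geometric quotient $X/G$ exists as an affine scheme. Moreover $X^h_{Gx}$ is itself an affine $G$-scheme, being a cofiltered limit over equivariant Nisnevich neighborhoods of $G\cdot x$ along affine transition maps, so its geometric quotient by $G$ is an affine scheme. Applying Proposition \ref{Quotient} to this $G$-scheme and combining with Proposition \ref{prop:hiso}, which identifies $(X^h_{Gx})/G$ with $(X/G)^h_{[x]}$ where $[x]$ denotes the image of $x$, one obtains an isomorphism
\begin{equation*}
H^p_{isoEt}(X^h_{Gx}, \mathcal{F}) \iso H^p_{et}\bigl((X/G)^h_{[x]}, \mathcal{G}\bigr),
\end{equation*}
where $\mathcal{G}$ is the \'etale sheaf on $(X/G)^h_{[x]}$ corresponding to $\mathcal{F}$ under the equivalence of topoi.

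Next I would use that $R := \mathcal{O}^h_{X/G,[x]}$ is a henselian local ring with residue field $\kappa := k([x])$, and invoke the classical equivalence of small \'etale topoi $\Shv_{et}(\Spec R)\simeq \Shv_{et}(\Spec \kappa)$ induced by pullback along the closed immersion. This equivalence is a direct consequence of Hensel's lemma: finite \'etale $R$-algebras are products of henselian local $R$-algebras, each determined by its residue field, so finite \'etale covers of $\Spec R$ and $\Spec \kappa$ correspond bijectively. Under this identification, \'etale cohomology of $\Spec R$ agrees with \'etale cohomology of $\Spec \kappa$, which is Galois cohomology
\begin{equation*}
H^p_{et}(\Spec R, \mathcal{G}) \iso H^p\bigl(\Gal(\kappa^s/\kappa), M\bigr),
\end{equation*}
for some discrete $\Gal(\kappa^s/\kappa)$-module $M$ (the stalk of $\mathcal{G}$ at a geometric point above $[x]$).

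Finally, for any profinite group $\Gamma$ and any discrete $\Gamma$-module $M$, continuous cohomology $H^p(\Gamma, M)$ is torsion for $p\geq 1$: it is the filtered colimit of $H^p(\Gamma/U, M^U)$ over open normal subgroups $U\subseteq \Gamma$, and each such finite-group cohomology term is annihilated by $|\Gamma/U|$. Applying this to $\Gamma = \Gal(\kappa^s/\kappa)$ gives the desired torsion statement. The main subtlety is to ensure that the henselian-to-residue-field topos equivalence is invoked in sufficient generality (for arbitrary abelian sheaves, not just torsion ones); this is the step that requires the most care, though it follows once one has the bijective correspondence of finite \'etale covers and can refine an arbitrary \'etale cover of $\Spec R$ through its finite \'etale pieces.
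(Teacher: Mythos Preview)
Your overall strategy matches the paper's proof exactly: reduce via Propositions~\ref{Quotient} and~\ref{prop:hiso} to \'etale cohomology of the henselian local scheme $(X/G)^h_{[x]}$, then use that higher \'etale cohomology of a henselian local scheme is torsion. The paper simply asserts this last fact; you attempt to justify it, which is commendable, but your justification contains an error.

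The claimed equivalence of small \'etale topoi $\Shv_{et}(\Spec R)\simeq \Shv_{et}(\Spec \kappa)$ is \emph{false} for a general henselian local ring $R$. Hensel's lemma gives an equivalence of \emph{finite} \'etale covers, but the small \'etale site contains all \'etale $R$-schemes, including objects like the punctured spectrum which have no analogue over $\kappa$; for instance, if $j:\eta\hookrightarrow \Spec R$ is the generic point then $j_!\underline{\Z}$ is a nonzero \'etale sheaf with vanishing stalk at the closed point, so the stalk functor is not faithful. What \emph{is} true, and suffices here, is the cohomological identity you write down: the pro-Galois cover $\Spec R^{sh}\to \Spec R$ has strictly henselian source, hence acyclic for any abelian sheaf, so the Cartan--Leray spectral sequence collapses to give $H^p_{et}(\Spec R,\mathcal{G})\cong H^p(\Gal(\kappa^s/\kappa),\mathcal{G}_{\bar{x}})$ for every abelian \'etale sheaf $\mathcal{G}$. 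Your concluding step (profinite group cohomology is torsion in positive degrees) is then correct as stated. So the argument goes through once you replace the topos equivalence by the Cartan--Leray degeneration; you were right to flag this as the delicate step.
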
 
\begin{proof}
\aref{Quotient} and \aref{prop:hiso} together  imply that we have an isomorphism
$H^{p}_{isoEt}(X^{h}_{Gx}, \mathcal{F}) \cong H^{p}_{Et}((X/G)^{h}_{x}, \pi_*\mathcal{F})$.
Since $(X/G)^{h}_{x}$ is an affine Henselian local scheme, 
$H^{p}_{Et}((X/G)^{h}_{x}, \pi_*\mathcal{F})$ is a torsion group.
\end{proof}

\begin{thm}
\label{thm:sheafcohomologycomparison}
Let $\mathcal{F}$ be a sheaf of $\mathbb{Q}$-modules in the isovariant \'etale topology on $X$. 
The change of topology functors induce natural isomorphisms 
\begin{equation*}
H^{\ast}_{GNis}(X,\mathcal{F})\xrightarrow{\cong}
H^{\ast}_{fpNis}(X,\mathcal{F})
\overset{\cong}{\rightarrow}
H^{\ast}_{isoEt}(X,\mathcal{F}).
\end{equation*}
\end{thm}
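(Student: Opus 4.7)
The plan is to apply Leray spectral sequences to the two morphisms of sites
\[
\Sm^{G}_{k/isoEt} \xrightarrow{\epsilon_1} \Sm^{G}_{k/fpNis} \xrightarrow{\epsilon_2} \Sm^{G}_{k/GNis}
\]
induced by the identity functor (each is a genuine morphism of sites, since the source has more covers than the target, as recorded earlier in the section). For such $\eta\colon T\to T'$ with $T$ finer than $T'$, the Leray spectral sequence
$H^p_{T'}(X, R^q\eta_\ast \mathcal{F}) \Rightarrow H^{p+q}_T(X,\mathcal{F})$
collapses to give a comparison isomorphism whenever $R^q\eta_\ast\mathcal{F}=0$ for $q>0$. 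I will prove this vanishing for $\epsilon := \epsilon_2\circ\epsilon_1$ and for $\epsilon_1$ whenever $\mathcal{F}$ is a sheaf of $\mathbb{Q}$-modules, and then extract the remaining comparison by factoring through the composite.

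For the composite $\epsilon$, the stalk of $R^q\epsilon_\ast\mathcal{F}$ at an equivariant Nisnevich point $x\in X$ is the colimit over equivariant Nisnevich neighborhoods of the orbit $G\cdot x$ of the groups $H^q_{isoEt}(V,\mathcal{F})$. Since isovariant \'etale cohomology commutes with cofiltered limits of affine schemes with affine transition maps, this agrees with $H^q_{isoEt}(X^h_{Gx},\mathcal{F})$. By \aref{cor:Hettorsion} this group is torsion; since $\mathcal{F}$ takes values in $\mathbb{Q}$-vector spaces, its cohomology is torsion-free, and hence zero. The Leray spectral sequence for $\epsilon$ then produces the composite isomorphism $H^{*}_{GNis}(X,\mathcal{F}) \xrightarrow{\cong} H^{*}_{isoEt}(X,\mathcal{F})$.

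A parallel analysis handles $\epsilon_1$. The stalks of $R^q(\epsilon_1)_\ast\mathcal{F}$ at a fixed point Nisnevich point are computed by $H^q_{isoEt}$ of the corresponding fpNis, equivalently isoNis by \aref{cor:ieiso}, Henselian neighborhood of the orbit. Applying the equivalence of sites $\Sm^G_{k/isoEt}\simeq (X/G)_{Et}$ afforded by \aref{Quotient}, together with the quotient description of Henselizations in \aref{prop:hiso}, one identifies this group with $H^q_{Et}((X/G)^h_{[x]}, \pi_\ast\mathcal{F})$, the \'etale cohomology of an affine Henselian local scheme. By the same argument used in \aref{cor:Hettorsion} this cohomology is torsion, and hence vanishes on $\mathbb{Q}$-sheaves for $q>0$. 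This yields the middle isomorphism $H^{*}_{fpNis}(X,\mathcal{F}) \xrightarrow{\cong} H^{*}_{isoEt}(X,\mathcal{F})$, and since the chain of change-of-topology maps factors the previously established composite isomorphism through it, the remaining arrow $H^{*}_{GNis}(X,\mathcal{F}) \xrightarrow{\cong} H^{*}_{fpNis}(X,\mathcal{F})$ is forced to be an isomorphism as well.

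The main obstacle is the clean identification of the stalks of the higher direct images with cohomology of the appropriate Henselian pro-schemes, and in particular the identification in the $\epsilon_1$ case with \'etale cohomology of a Henselian local scheme via the quotient $X/G$. Once this bookkeeping is done, the torsion-vs-$\mathbb{Q}$-module cancellation from \aref{cor:Hettorsion} (and the classical vanishing of \'etale cohomology of a Henselian local scheme on $\mathbb{Q}$-sheaves in positive degrees, which is its underlying input) completes the proof.
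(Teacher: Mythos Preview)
Your proposal is correct and follows essentially the same approach as the paper: both use the change-of-topology (Leray) spectral sequences for $isoEt \to GNis$ and $isoEt \to fpNis$, and both collapse these using the torsion statement of \aref{cor:Hettorsion} together with the $\mathbb{Q}$-module hypothesis. One small imprecision worth noting: \aref{prop:hiso} as stated concerns the $GNis$-henselization $X^h_{Gx}$, not the $isoNis$-henselization relevant to your $\epsilon_1$ step; the analogous identification for the $isoNis$ local scheme with a Nisnevich-local scheme on $X/G$ follows by the same reasoning (or directly from the isovariant-Nisnevich analogue of \aref{Quotient}), but it is a separate, if parallel, statement.
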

\begin{proof}
The change of topology spectral sequence comparing cohomology in the isovariant \'etale topology and in the equivariant Nisnevich topology collapses as a result of \aref{cor:Hettorsion}.
Similarly, the one comparing cohomology in the isovariant \'etale topology and in the fixed point Nisnevich topology, also collapses.  
\end{proof}

\subsection{Presheaves with equivariant \texorpdfstring{$K_{0}$}{K}-transfers}
We introduce an equivariant generalization of the notion of a presheaf with $K_{0}$-transfers. This is a generalization of Voevodsky's notion of a presheaf with transfer, introduced by M. Walker \cite{Walker} (see as well \cite{Suslin:Grayson}), which is particularly well suited for studying $K$-theory.
Write $\mathcal{P}_{G}(X,Y)$ for the category of coherent $G$-modules on $X\times Y$ which are flat over $X$ and whose support is finite over $X$ (as usual we refer briefly to these conditions as \textsl{finite and flat over $X$}).
This category is closed under extensions inside of the abelian category of coherent $G$-modules on $X\times Y$ and so forms an exact category.
Define
$$
K_{0}^{G}(X,Y): = K_{0}(\mathcal{P}_{G}(X,Y)).
$$
Define $K_{0}(\Sm_{k}^{G})$ to be the category whose objects are the same as $\Sm_{S}^{G}$ and $\Hom_{K_{0}(\Sm_{k}^{G})}(X,Y) = K_{0}^{G}(X,Y)$.
For $X_1,X_2,X_3$ in $\Sm_{k}^{G}$, the composition pairing 
$\circ: K_{0}^{G}(X_2,X_3)\times K_{0}^{G}(X_1,X_2) \to K_{0}^{G}(X_1,X_3)$ is induced by 
the pairing of exact categories
$$
\mathcal{P}_{G}(X_{2},X_{3})\times \mathcal{P}_{G}(X_1,X_2) \to \mathcal{P}_{G}(X_1,X_3)
$$
given by $(\mathcal{Q},\mathcal{P})\mapsto(p_{13})_*(p_{12}^*(\mathcal{P})
\otimes p_{23}^{*}(\mathcal{Q}))$. The tensor product is over ${\mathcal{O}_{X_1\times X_2\times X_3}}$
and $p_{ij}:X_1\times X_2\times X_3\to X_{i}\times X_{j}$ is the projection.

\begin{remark}
It is useful to note the following two special cases of composition in 
$K_{0}(\Sm_{k}^{G})$. 
\begin{enumerate}
\item $P\circ f = (f\times\id_{Z})^*(P)$ for a morphism $f:X\to Y$ in $\Sm_{k}^{G}$ and $P\in K_{0}(Y,Z)$.
 \item $ g\circ Q = (\id_{X}\times g)_{*}(Q)$ for a morphism $g:Y\to Z$ in $\Sm_{k}^{G}$ and $Q\in K_{0}(X,Y)$. 
\end{enumerate}
\end{remark}

\begin{defn}
 An \textsl{equivariant $K_{0}$-presheaf} $\mathcal{F}$ on $\Sm_{k}^{G}$ is an additive presheaf $\mathcal{F}:K_{0}(\Sm_{k}^{G})^{op}\to \Ab$.
\end{defn}

There is a functor $\Sm_{k}^{G}\to K_{0}(\Sm_{k}^{G})$ which is the identity on objects and sends a morphism $f:X\to Y$ to the structure sheaf $\mathcal{O}_{\Gamma_{f}}$ of the graph $\Gamma_{f}\subseteq X\times Y$ of $f$. In particular, an equivariant $K_{0}$-presheaf is also a presheaf on $\Sm_{k}^{G}$.

Let $f:X\to Y$ be a finite, flat equivariant map and write $\Gamma_{f}^{t}\subseteq Y\times_{S} X$ for the transpose of the graph of $f$. Its structure sheaf is a morphism $f^{t}:Y\to X$ in $K_{0}(\Sm_{k}^{G})$. For an equivariant $K_0$-presheaf 
$\mathcal{F}$, write $f_* := \mathcal{F}(f^t):\mathcal{F}(X)\to \mathcal{F}(Y)$ for the induced morphism. More generally for any $f:X\to Y$, write $K_{0}^{G}(X/Y)$ for the $K$-theory of the category $\mathcal{P}_{G}(X/Y)$ of coherent $G$-modules on $X$ which are finite and flat over $Y$. 
Any element of $K_{0}^{G}(X/Y)$ give rise to a transfer map in an equivariant $K_0$-presheaf. 
Indeed, there is an exact functor $\mathcal{P}_{G}(X/Y)\to \mathcal{P}_{G}(Y,X)$ given by 
$P\mapsto \langle f, \id_{X} \rangle_*(P)$.

\begin{example}
\begin{enumerate}
\item The category $\mathcal{P}_{G}(X,\Spec(k))$ is the category of $G$-vector bundles on $X$ and so equivariant algebraic $K$-theory $K_{n}^{G}(-)$ is an equivariant $K_{0}$-presheaf  for all $n$. 

\item 
If $\mathcal{F}$ is an equivariant $K_{0}$-presheaf then $\mathcal{F}(X)$ is a module over $K_{0}^{G}(X)$ as follows.
There is an exact functor $\mathcal{P}_{G}(X) \to \mathcal{P}_{G}(X,X)$ given by $\mathcal{V}\mapsto \Delta_{*}\mathcal{V}$ which induces the map 
$K_{0}^{G}(X)\to K_{0}^{G}(X,X)$. It is straightforward to check that defining $[\mathcal{V}]\cdot x = \mathcal{F}([\Delta_*\mathcal{V}])(x)$ for 
$x\in \mathcal{F}(X)$, $[\mathcal{V}]\in K_{0}(X)$ equips $\mathcal{F}(X)$ with the desired module structure.
\end{enumerate}
\end{example}

\begin{prop}\label{prop:transfer}
 Let $\mathcal{F}$ be an equivariant $K_{0}$-presheaf. 
 \begin{enumerate}
  \item Let $f:X\to Y$ be a finite, flat equivariant morphism. Then we have that  $f_*f^*:\mathcal{F}(Y)\to \mathcal{F}(X)\to \mathcal{F}(Y)$ is multiplication by $[f_*\mathcal{O}_{X}]\in K_{0}^{G}(Y)$. 
  \item Let $f:X\to Y$ be an isovariant finite \'etale map of degree $d$. Suppose that a geometric quotient $Y\to Y/G$ exists in $\Sch_{k}$. Then $f^*f_*:\mathcal{F}(Y)\to \mathcal{F}(Y)$ is equal to multiplication by the degree of $f$. 
 \end{enumerate}
\end{prop}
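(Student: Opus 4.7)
For part~(1), my plan is to compute the composition $f_{*}\circ f^{*}$ directly in the correspondence category. Since $\mathcal{F}$ is contravariant on $K_{0}(\Sm_{k}^{G})$ we have $f_{*}\circ f^{*}=\mathcal{F}([\mathcal{O}_{\Gamma_{f}}]\circ[\mathcal{O}_{\Gamma_{f}^{t}}])$, with the composition taking place in $K_{0}^{G}(Y,Y)$. I will unwind the composition formula on $Y\times X\times Y$: the pullbacks $p_{12}^{*}[\mathcal{O}_{\Gamma_{f}^{t}}]$ and $p_{23}^{*}[\mathcal{O}_{\Gamma_{f}}]$ are the structure sheaves of the smooth subschemes $\Gamma_{f}^{t}\times Y$ and $Y\times\Gamma_{f}$, each of codimension $\dim Y$. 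Their scheme-theoretic intersection is the image of the closed embedding $(f,\mathrm{id}_{X},f)\colon X\hookrightarrow Y\times X\times Y$, a smooth copy of $X$ of codimension $2\dim Y$; because this is a proper intersection in a smooth ambient scheme, the tensor product is simply the structure sheaf of the intersection, and since the restriction of $p_{13}$ to this intersection factors as $X\xrightarrow{f}Y\xrightarrow{\Delta_{Y}}Y\times Y$, the pushforward yields $\Delta_{Y,*}f_{*}\mathcal{O}_{X}$ in $K_{0}^{G}(Y,Y)$. By the $K_{0}^{G}(Y)$-module structure on $\mathcal{F}(Y)$ described in the example following the definition of an equivariant $K_{0}$-presheaf, this correspondence acts as multiplication by $[f_{*}\mathcal{O}_{X}]$, giving part~(1).

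For part~(2), by~(1) it suffices to show that $[f_{*}\mathcal{O}_{X}]\in K_{0}^{G}(Y)$ acts on $\mathcal{F}(Y)$ as multiplication by $d$. Since $f$ is stabilizer-preserving and $Y/G$ exists as a scheme, the discussion preceding \aref{prop:hiso} yields a cartesian square
\[
\xymatrix@C=1.5pc{X\ar[r]^{f}\ar[d]&Y\ar[d]^{\pi}\\X/G\ar[r]^{f'}&Y/G}
\]
with $f'$ finite étale between schemes. Matching the geometric fibers of $f$ and $f'$ (which coincide by the cartesian property) gives $\deg f'=\deg f=d$. Flat base change along the étale map $f'$ yields $f_{*}\mathcal{O}_{X}\cong\pi^{*}f'_{*}\mathcal{O}_{X/G}$ as $G$-equivariant $\mathcal{O}_{Y}$-modules, so $[f_{*}\mathcal{O}_{X}]=\pi^{*}[f'_{*}\mathcal{O}_{X/G}]$ in $K_{0}^{G}(Y)$, reducing the problem to identifying this class with $d\cdot[\mathcal{O}_{Y}]$ in its action on $\mathcal{F}(Y)$.

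The hard part will be this final identification. Integrally the class $[f'_{*}\mathcal{O}_{X/G}]$ need not equal $d\cdot[\mathcal{O}_{Y/G}]$: a non-trivial étale degree-$d$ cover of an elliptic curve with non-trivial torsion in the Picard group already furnishes a counterexample in the trivial-$G$ case. The natural remedy is to apply Grothendieck-Riemann-Roch to the étale map $f'$, whose relative Todd class is trivial, so that $\mathrm{ch}([f'_{*}\mathcal{O}_{X/G}])=f'_{*}(1)=d$ in $\CH^{*}(Y/G)\otimes\Q$, hence $[f'_{*}\mathcal{O}_{X/G}]=d$ in $K_{0}(Y/G)\otimes\Q$; pulling back by $\pi^{*}$ and using that $\mathcal{F}$ takes values in $\Q$-modules, multiplication by $[f_{*}\mathcal{O}_{X}]$ on $\mathcal{F}(Y)$ then coincides with multiplication by $d$. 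I therefore expect part~(2) to be invoked under such a rationality hypothesis on $\mathcal{F}$, consistent with its intended use for rational equivariant $K$-theory in \aref{subsection:H-Nis}.
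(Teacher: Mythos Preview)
Your treatment of part~(1) is a careful unwinding of the composition in $K_{0}(\Sm_{k}^{G})$ and agrees with the paper's argument, which simply asserts that ``unraveling the definitions'' shows $f_{*}f^{*}$ is induced by $[\Delta_{*}f_{*}\mathcal{O}_{X}]\in K_{0}^{G}(Y,Y)$. (You have also silently corrected the statement of part~(2) from $f^{*}f_{*}$ to $f_{*}f^{*}$; the paper's proof and its sole application in \aref{cor:shfvanish} both concern $f_{*}f^{*}$, so this is clearly a typo.)

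For part~(2) you follow exactly the paper's reduction: form the cartesian square over the quotients and use flat base change to write $[f_{*}\mathcal{O}_{X}]=\pi^{*}[\overline{f}_{*}\mathcal{O}_{X/G}]$ in $K_{0}^{G}(Y)$. At this point the paper asserts that $[\overline{f}_{*}\mathcal{O}_{X/G}]=\deg(\overline{f})$ in $K_{0}^{G}(Y/G)$, justifying it only by the splitting $K_{0}^{G}(M)=K_{0}(M)\otimes K_{0}^{G}(k)$ for $M$ with trivial $G$-action. You are right to flag this: the splitting merely reduces the claim to the non-equivariant identity $[\overline{f}_{*}\mathcal{O}_{X/G}]=d$ in $K_{0}(Y/G)$, and your elliptic-curve example shows this fails integrally. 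Your proposed remedy via Grothendieck--Riemann--Roch under a $\Q$-module hypothesis on $\mathcal{F}$ is correct and is exactly what is needed for the only use of this proposition, namely \aref{cor:shfvanish}, where $\mathcal{F}$ is explicitly assumed to take values in $\Q$-modules. One could also rescue the paper's argument in that specific application by noting that there $Y=X^{h}_{Gx}$, so $Y/G=(X/G)^{h}_{x}$ is Henselian local, $K_{0}(Y/G)=\Z$, and the class is determined by its rank $d$; but as a general statement your objection stands and your fix is the right one.
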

\begin{proof}
 Unraveling the definitions, we see that $f_*f^*$ is the map induced by the endomorphism $[\Delta_*f_*\mathcal{O}_{X}]\in K_{0}^{G}(Y,Y)$ which establishes the first item. For the second item we note that the hypothesis implies that $X\to X/G$ exists in $\Sch_{k}$ and that we have a cartesian square (see the discussion preceding \aref{prop:hiso})
 $$
 \xymatrix{
 X \ar[r]^{f}\ar[d]_{\pi_{X}} & Y \ar[d]^{\pi_{Y}} \\
 X/G \ar[r]^{\overline{f}} & Y/G .
 }
 $$
 The map $\overline{f}$ is finite \'etale of degree equal to the degree of $f$ and in $K_{0}^{G}(Y)$ we have that $[f_*\mathcal{O}_{X}] = [f_*(\pi_{X})^*\mathcal{O}_{X/G}] = [\pi_{Y}^*\overline{f}_*\mathcal{O}_{X/G}]$. It suffices to see that $[\overline{f}_{*}\mathcal{O}_{X/G}]$ is equal to the degree of $\overline{f}$ in $K_{0}^{G}(Y/G)$. This follows from the fact that if $M$ has trivial action then $K_{0}^{G}(M) = K_0(M)\otimes K^G_0(k)$.
\end{proof}

\begin{cor}\label{cor:shfvanish}
 Let $\mathcal{F}$ be an equivariant $K_{0}$-presheaf of $\Q$-modules. If the sheafification $\mathcal{F}_{isoEt} = 0$ then $\mathcal{F}_{GNis}=0$ as well.
\end{cor}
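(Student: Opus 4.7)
The strategy rests on exploiting the transfer structure on $\mathcal{F}$ together with the rational coefficients: for a finite isovariant \'etale cover $f$ of degree $d$, the proof of \aref{prop:transfer}(2) computes $[f_*\sO] = d$ in $K_0^G$, so combining with part~(1) gives $f_*f^* = d \cdot \id$. Since $d$ is invertible in $\Q$, vanishing after pullback along such a map forces vanishing on the base. The plan is therefore to produce, for a given section $a \in \mathcal{F}(X)$ that is locally zero in the isovariant \'etale topology, an equivariant Nisnevich cover over which $a$ is killed by a finite isovariant \'etale refinement.

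It suffices to show that for every $X \in \Sm_k^G$ and every $a \in \mathcal{F}(X)$ there is an equivariant Nisnevich cover $\{U_k \to X\}$ with $a|_{U_k} = 0$. By \aref{lem:affinenbd} we reduce to the case that $X$ is affine, so the geometric quotient $X/G$ exists as an affine Noetherian scheme. The hypothesis $\mathcal{F}_{isoEt} = 0$ provides an isovariant \'etale cover $\{V_j \to X\}_{j\in J}$ killing $a$, which may be taken stabilizer-preserving using \aref{cor:ieiso} together with \aref{prop:Rydh}.

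The key refinement step is as follows. Via Thomason's \aref{Quotient}, the isovariant \'etale cover $\{V_j \to X\}$ corresponds to an \'etale cover $\{V_j/G \to X/G\}$ of the affine Noetherian scheme $X/G$. A standard refinement argument on schemes (for each point $y \in X/G$, pick $v \in V_{j(y)}/G$ over $y$, present the finite separable residue extension $k(v)/k(y)$ as $k(y)[t]/(p(t))$, lift $p$ to a monic polynomial $P$ over a Zariski neighborhood of $y$, and take $\Spec$ of the resulting finite \'etale algebra) produces a refinement by a Nisnevich cover $\{U'_k \to X/G\}$ together with finite \'etale surjections $f'_k : W'_k \to U'_k$ factoring through $V_{j(k)}/G \times_{X/G} U'_k$. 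Pulling back along $X \to X/G$ via the site equivalence of \aref{Quotient} yields an equivariant Nisnevich cover $\{U_k \to X\}$ of $X$, together with, for each $k$, a finite isovariant \'etale surjection $f_k : W_k \to U_k$ satisfying $a|_{W_k} = 0$.

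Finally, each $U_k$ is $G$-invariant and affine, so $U_k/G$ is an affine scheme and \aref{prop:transfer} applies to the finite isovariant \'etale map $f_k$: we obtain $(f_k)_* f_k^* = \deg(f_k) \cdot \id$ on $\mathcal{F}(U_k)$. Applying $(f_k)_*$ to the equality $f_k^*(a|_{U_k}) = a|_{W_k} = 0$ yields $\deg(f_k) \cdot a|_{U_k} = 0$. Since $\mathcal{F}$ takes values in $\Q$-modules and $\deg(f_k)$ is a positive integer, we conclude $a|_{U_k} = 0$; thus $\{U_k \to X\}$ is an equivariant Nisnevich cover killing $a$, whence $\mathcal{F}_{GNis} = 0$. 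The main obstacle is the refinement step: producing, from an arbitrary isovariant \'etale cover, a refinement of the form \emph{finite isovariant \'etale over an equivariant Nisnevich cover}. Transporting the problem to the quotient scheme $X/G$ via \aref{Quotient}, where the standard ``$\Spec(\mathcal{O}[t]/P(t))$'' construction refines \'etale covers by finite \'etale covers over Nisnevich opens, is the crucial technical manoeuvre.
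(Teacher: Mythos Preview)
Your argument is correct and uses the same transfer--with--rational--coefficients mechanism as the paper, but you organize it differently. The paper works directly at the equivariant Nisnevich stalks: it takes $c \in \mathcal{F}(X^{h}_{Gx})$, notes that $(X^{h}_{Gx})/G \cong (X/G)^{h}_{x}$ is a henselian local scheme (\aref{prop:hiso}), and uses that over a henselian local base every \'etale cover is refined by a \emph{finite} \'etale cover. Thus the finite isovariant \'etale map $f:V\to X^{h}_{Gx}$ killing $c$ is obtained for free, and one line of transfer ($f_*f^*(c)=\deg(f)\cdot c$) finishes.

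Your version carries out the same idea globally: you pass to the affine quotient $X/G$, refine the given \'etale cover by the standard ``Nisnevich open plus $\Spec(\mathcal{O}[t]/P)$'' construction, and then pull back. This is more explicit about the refinement step that the paper leaves implicit, and it avoids extending $\mathcal{F}$ to the pro-object $X^{h}_{Gx}$. The price is a longer argument; the paper's stalk-level approach is shorter precisely because henselianness collapses the refinement problem to a single point. Both routes rely on the same inputs---\aref{Quotient}, \aref{prop:transfer}, and invertibility of the degree in $\Q$---and arrive at the same conclusion.
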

\begin{proof}
Let $X$ be a smooth $G$-scheme over $S$, $x\in X$, and $c\in \mathcal{F}(X^{h}_{Gx})$. Since  
$\mathcal{F}_{isoEt} = 0$ there is a 
finite, isovariant \'etale morphism $f:V\to  X^{h}_{Gx}$ such that $f^{*}(c) = 0$. But $f_{*}f^*(c) = \deg(f)\cdot c$ by  \aref{prop:transfer} and therefore $c =0$.

\end{proof}

If $\tau$ is a Grothendieck topology on $\Sm_{S}^{G}$ then $\mathcal{F}$ is said to be a \textsl{$\tau$-sheaf with equivariant $K_{0}$-transfers}, or an \textsl{equivariant $K_{0}$-$\tau$-sheaf} for short,  if it is an equivariant $K_{0}$-presheaf  whose underlying presheaf on $\Sm_{S}^{G}$ is a $\tau$-sheaf. 

\begin{lem}\label{lem:nistran}
 Let $f:U\to Y$ be an equivariant Nisnevich cover (resp.~ an isovariant \'etale cover) and $P\in K_{0}(X,Y)$. Then there is an equivariant Nisnevich cover (resp.~ an isovariant \'etale cover) $f':V\to X$ and $Q\in K_{0}(V,Y)$ which fit into a commutative square in $K_{0}(\Sm_{S}^{G})$
 $$
 \xymatrix{
 V \ar[r]^{Q}\ar[d]_{f'} & U \ar[d]^{f} \\
 X\ar[r]^{P} & Y.
 }
 $$
\end{lem}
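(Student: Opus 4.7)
First I would pick a representative $\mathcal{P}$ of $P$: a coherent $G$-module on $X \times Y$ that is flat over $X$ and whose support $Z \subseteq X \times Y$ is finite over $X$. Form $\tilde Z := Z \times_Y U \subseteq X \times U$; since the equivariant Nisnevich (resp.\ isovariant \'etale) topology is stable under base change, the projection $\pi:\tilde Z \to Z$ is a cover in the same topology. Unwinding the composition rules in $K_{0}(\Sm^G_k)$, the lemma reduces to producing a cover $f': V \to X$ together with a $G$-equivariant section $s$ of the base change $V \times_X \tilde Z \to V \times_X Z$. Granted such data, I set $\mathcal{Q} := s_{*}\bigl((f'\times \id_Y)^*\mathcal{P}|_{V \times_X Z}\bigr)$, viewed as a coherent $G$-module on $V \times U$; it is finite and flat over $V$ (obtained from $\mathcal{P}$ by flat base change, restriction, and pushforward along a closed immersion in the $U$-direction), and by construction $(\id_V \times f)_{*}\mathcal{Q} = (f'\times \id_Y)^*\mathcal{P}$, which is exactly the commutativity of the square in $K_{0}(\Sm^G_k)$.

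The substantive content of the lemma is therefore the following descent statement: for a finite equivariant morphism $Z \to X$ in $\Sch^G_S$ and any cover $\tilde Z \to Z$ in the equivariant Nisnevich (resp.\ isovariant \'etale) topology, there exists a cover $V \to X$ in the same topology such that $V \times_X \tilde Z \to V \times_X Z$ admits a $G$-equivariant section. I would verify this pointwise, passing to the equivariant henselization $X^{h}_{Gx}$ as in~\eqref{eqn:Ghensel}. Because $Z \to X$ is finite, $Z \times_X X^h_{Gx}$ decomposes as a $G$-stable disjoint union of equivariant henselian local pieces; the closed point of each piece is fixed by the stabilizer $H$ of the piece. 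By the pointwise characterization \aref{prop:nischar} (and its evident isovariant analogue), the cover $\tilde Z \to Z$ produces, above such a closed point $z$, a point $w$ with $k(w) \iso k(z)$ and matching stabilizer. Henselness of the piece then extends $w$ uniquely to a section of the \'etale cover over that piece, and uniqueness of this extension forces $H$-equivariance: any $H$-translate of the section is again a section with the same closed-point value $w$, hence equals the original. Translating across $G$-orbits of pieces assembles these into a $G$-equivariant section over $Z \times_X X^h_{Gx}$. Since the section is of finite type and $X^h_{Gx}$ is the filtered limit of affine equivariant Nisnevich neighborhoods of $G\cdot x$, it descends to some such neighborhood $V_x \to X$. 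The family $\{V_x \to X\}_{x \in X}$ satisfies the hypothesis of \aref{cor:finitesubcovering}, yielding a finite equivariant Nisnevich subcover $f':V \to X$.

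The main obstacle is the equivariance of the lifted section. Producing the closed-point lift is immediate from the pointwise characterization of covers, and spreading a rational point to a section over a henselian local base is the standard lifting property for \'etale morphisms. The delicate ingredient is the equivariance, which I secure by the uniqueness of a section of an \'etale morphism over a henselian local scheme once its value at the closed point is specified: an $H$-translate of the section has the same closed-point value, hence must coincide with the original. Once this local claim is in hand, the remaining steps --- descending to a finite-type neighborhood, extracting a finite subcover, defining $\mathcal{Q}$ via $s$, and verifying $(\id_V \times f)_{*}\mathcal{Q} = (f'\times \id_Y)^*\mathcal{P}$ --- are formal.
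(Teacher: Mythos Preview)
Your argument is correct and follows the same strategy as the paper: reduce to a single coherent $G$-module $\mathcal{P}$ with support $Z$, pull back the cover along $Z\subseteq X\times Y\to Y$, show that after passing to the equivariant henselization $X^h_{Gx}$ the map $\tilde Z\to Z$ acquires an equivariant section, spread this to a finite-type neighborhood, extract a finite subcover, and push $\mathcal{P}$ along the section to define $Q$. The paper simply asserts that an equivariant Nisnevich cover of the semilocal Henselian $G$-scheme $Z_x$ splits equivariantly; your uniqueness argument (translating the section by $h\in H$ gives another section with the same closed-point value, hence the same section) is exactly what justifies this assertion, so you have made explicit a step the paper leaves to the reader. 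One small point of precision: for the isovariant \'etale case you should pass to the isovariant \'etale local model $G\times^{G_x}\Spec(\mathcal{O}^h_{X,\bar{x}})$ rather than $X^h_{Gx}$, so that the residue fields of the pieces of $Z$ are separably closed and the section through any chosen $w$ exists; your parenthetical ``evident isovariant analogue'' is pointing in the right direction, but the local object changes.
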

\begin{proof}
We treat the case of an equivariant Nisnevich cover; the isovariant \'etale case is similar. It suffices to treat the case when $P\in \mathcal{P}_{G}(X,Y)$. Write $Z = \Supp(P)$ and consider the pullback $Z'=U\times_{Y}Z$. Then $Z'\to Z$ is an equivariant Nisnevich cover and $\pi:Z\to X$ is finite. We can find an equivariant Nisnevich cover $V\to X$ such that $V\times_{X}Z'\to V\times_{X} Z$ has an equivariant section. 
Indeed, for any $x\in X$, $Z_{x}=X^{h}_{G x}\times_{X} Z$ is disjoint union of semilocal Henselian affine $G$-schemes with a single orbit and $Z'_{x}=X^{h}_{G x}\times_{X} Z'\to Z_{x}$ is an equivariant Nisnevich cover. Therefore $Z_{x}'\to Z_{x}$ has an equivariant section and so there is some equivariant Nisnevich neighborhood $V_{x}\to X$ of $G x$ such that   $V_{x}\times_{X}Z'\to V_{x}\times_{X} Z$ has an equivariant section. The covering $\{V_{x}\to X\}$ has a finite subcovering $\{V_{x_1},\ldots, V_{x_n}\}$ and $V:=\coprod V_{x_i}\to X$ has the property that $V\times_{X}Z'\to V\times_{X} Z$ has an equivariant section.

Now let $s:V\times_{X}Z\to V\times_{X} Z'$ be a choice of equivariant  section and write $j: s(V\times_{X}Z)\hookrightarrow V\times U$ for the resulting inclusion (which is a closed invariant subscheme that is finite over $V$). Now set 
$Q= j_*s_*P|_{V\times_{X}Z}$. Then $\Supp(Q)= s(V\times_{X}Z)$, $Q$ is flat over $V$ and $P\circ f' = f\circ Q$ in $K_{0}(\Sm_{S}^{G})$.
\end{proof}

\begin{thm}
\label{thm:eK0presheaves1}
If $\mathcal{F}$ is an equivariant $K_{0}$-presheaf then the equivariant Nisnevich sheafification (resp.~ isovariant \'etale sheafification) has a unique structure of an equivariant $K_{0}$-presheaf such that $\phi:\mathcal{F}\to (\mathcal{F})_{GNis}$ (resp.~ $\phi:\mathcal{F}\to (\mathcal{F})_{isoEt}$) is a morphism of equivariant $K_{0}$-presheaves.
\end{thm}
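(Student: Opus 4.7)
The plan is to transport the transfer structure from $\mathcal{F}$ to its sheafification using \aref{lem:nistran} to lift $K_0^G$-correspondences through covers. Both cases are parallel, so I focus on the equivariant Nisnevich sheafification $\mathcal{F}_{GNis}$, and the isovariant \'etale case will follow verbatim using the parenthetical form of the lemma.

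First I would settle \emph{uniqueness}. Suppose $\mathcal{F}_{GNis}$ admits such a structure. Given $P \in K_0^G(X, Y)$ and $s \in \mathcal{F}_{GNis}(Y)$, choose an equivariant Nisnevich cover $f \colon U \to Y$ and a lift $\tilde s \in \mathcal{F}(U)$ with $\phi(\tilde s) = f^{*}s$. By \aref{lem:nistran} applied to $f$ and $P$, there exist a cover $f' \colon V \to X$ and $Q \in K_0^G(V, U)$ with $P \circ f' = f \circ Q$ in $K_0(\Sm^G_S)$. Since $\phi$ is required to intertwine transfers,
\[
(f')^{*}(P^{*} s) \;=\; Q^{*}(f^{*} s) \;=\; Q^{*}\phi(\tilde s) \;=\; \phi(Q^{*}\tilde s).
\]
As $f'$ is a cover, the sheaf axiom on $\mathcal{F}_{GNis}$ determines $P^{*}s$ from its restriction to $V$, which pins it down.

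Next I would take the displayed identity as the \emph{definition} of $P^{*}$. The real work is checking independence from (a) the lift $\tilde s$, (b) the cover $U \to Y$, and (c) the auxiliary data $(V, Q)$ produced by \aref{lem:nistran}. For (a), any two lifts of $f^{*} s$ to $\mathcal{F}(U)$ agree in $\mathcal{F}_{GNis}(U)$ and hence coincide after restriction to a further cover of $U$; pulling that refinement back via \aref{lem:nistran} yields a cover of $V$ on which the two candidates for $Q^{*}\tilde s$ are equal in $\mathcal{F}$. Items (b) and (c) are handled analogously by passing to a common refinement $U_1 \times_Y U_2 \to Y$ and invoking the lemma once more to dominate $V_1$ and $V_2$ by a single cover of $X$. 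Additivity $(P + P')^{*} = P^{*} + (P')^{*}$ is immediate, and compatibility with composition $(P_2 \circ P_1)^{*} = P_1^{*} \circ P_2^{*}$ is verified by iterated lifts: given a lift of $s$ on a cover of $Z$, apply \aref{lem:nistran} first to $P_2$, then to $P_1$, and use associativity of composition in $K_0(\Sm^G_S)$. That $\phi$ itself commutes with transfers falls out of the definition by taking $U = Y$, $f = \id$, $\tilde s = s$.

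The hard part will be the well-definedness bookkeeping in (a)--(c), since \aref{lem:nistran} produces merely \emph{some} cover and correspondence $(V, Q)$ rather than a canonical one. My strategy is to package each comparison between two choices into a single commutative diagram in $K_0(\Sm^G_S)$ obtained by iterating the lemma, so that the desired equality in $\mathcal{F}_{GNis}$ reduces, after one further cover refinement, to functoriality of $\mathcal{F}$ applied to an identity in $K_0(\Sm^G_S)$.
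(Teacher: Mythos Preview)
Your approach is essentially the paper's: both uniqueness and existence are obtained by lifting sections along covers and invoking \aref{lem:nistran} to move $K_0^G$-correspondences through those covers. The paper organizes the existence proof slightly differently: it first observes that if $y\in\mathcal F(Y)$ dies in $\mathcal F_{GNis}(Y)$ then $P^*y$ dies in $\mathcal F_{GNis}(X)$ (again by \aref{lem:nistran}), so the transfer descends to the \emph{separated} presheaf $s_{GNis}\mathcal F$; one then assumes $\mathcal F$ is separated, in which case the lift $\tilde s\in\mathcal F(U)$ of $f^*s$ is unique and the cocycle identity $\pi_1^*\tilde s=\pi_2^*\tilde s$ on $U\times_Y U$ is automatic, from which the gluing condition $(\pi_1')^*Q^*\tilde s=(\pi_2')^*Q^*\tilde s$ on $V\times_X V$ follows. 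This two-step reduction collapses your checks (a)--(c) into a single descent verification; your direct route is correct but carries more bookkeeping, and you should make explicit that $\phi(Q^*\tilde s)$ actually satisfies the gluing condition on $V\times_X V$ (so that it descends to $\mathcal F_{GNis}(X)$), not merely that it is independent of choices.
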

\begin{proof}
This is similar to the nonequivariant case 
(see e.g., \cite[Lemma 1.5]{Suslin:Grayson}). We treat the case of the equivariant Nisnevich topology; the case of the  isovariant \'etale topology is similar.

 We begin with uniqueness. Let $\mathcal{F}_{1}$ and $\mathcal{F}_{2}$ be two equivariant $K_{0}$-presheaves  with a map of equivariant $K_{0}$-presheaves 
 $\mathcal{F}\to \mathcal{F}_{i}$ whose underlying map of presheaves is the canonical map 
 $\mathcal{F}\to \mathcal{F}_{GNis}$. 
 Let $P:X\to Y$ be a map in $K_{0}^{G}(\Sm_{S}^{G})$ and 
 $y\in \mathcal{F}_{1}(Y) = \mathcal{F}_{2}(Y) = \mathcal{F}_{GNis}(Y)$. Choose an equivariant Nisnevich covering $U\to Y$ such that $y|_{U}$ is in the image of $u\in F(U)$. Applying  \aref{lem:nistran} we have a commutative square in $K_{0}(\Sm_{S}^{G})$
$$
\xymatrix{
V\ar[d]_{f'}\ar[r]^{Q} & U \ar[d]^{f} \\
X\ar[r]^{P} & Y 
}
$$
where $f':V\to X$ is an equivariant Nisnevich cover. It is straightforward to verify, using this square, that $\mathcal{F}_{1}(P)(y) = \mathcal{F}_{2}(P)(y)$ and so $\mathcal{F}_{1}=\mathcal{F}_{2}$ as equivariant $K_{0}$-presheaves.

Now we show existence. 
First we note that by \aref{lem:nistran}, if 
 $P\in K_{0}^{G}(X,Y)$ and $y\in \mathcal{F}(Y)$ is a section which vanishes in $ (\mathcal{F}(Y))_{GNis}$ then $(P^*y)$ vanishes in $(\mathcal{F}(X))_{GNis}$ as well. This implies that the separated (in the equivariant Nisnevich topology) presheaf $s_{GNis}\mathcal{F}$ has the structure of an equivariant $K_{0}$-presheaf such that $\mathcal{F}\to s_{GNis}\mathcal{F}$ is a morphism of equivariant 
 $K_{0}$-presheaves.
 We may therefore assume that $\mathcal{F}$ is a separated presheaf. 
Let $P:X\to Y$ be a morphism in $K_{0}(\Sm_{S}^{G})$ and $y\in \mathcal{F}_{GNis}(Y)$. We need to define $\mathcal{F}(P)(y)\in \mathcal{F}(X)$. There is an equivariant Nisnevich cover $f:U\to Y$ such that $y|_{U}$ is the image of $u\in \mathcal{F}(U)$. By \aref{lem:nistran} there is an equivariant Nisnevich cover $f':V\to X$ and $Q\in K_{0}^{G}(V,U)$ such that $f\circ Q = P\circ f'$. Consider 
$x'=\mathcal{F}(Q)(u)$. Write $\pi_i:U\times_{Y} U\to U$ and $\pi_{i}':V\times_{X}V\to V$, $i=1,2$ for the projection to the $i$th factor. Note that $\pi_1^*u=\pi_2^*u$ and this implies that $(\pi_{1}')^*(x') = (\pi_2')^*(x')$. Thus $x'$ determines an element $x\in \mathcal{F}(X)$ and define
$\mathcal{F}(P)(y) := x$. It is straightforward to check that this endows $\mathcal{F}$ with the structure of an equivariant $K_{0}$-presheaf  and $\mathcal{F}\to (\mathcal{F})_{GNis}$ is a morphism of equivariant $K_{0}$-presheaves. 
\end{proof}

\begin{cor}
\label{cor:eK0presheaves2}
Let $\mathcal{F}$ be a $K_0$-presheaf of $\Q$-modules. Then $(\mathcal{F})_{GNis}\to (\mathcal{F})_{isoEt}$ is an isomorphism.
\end{cor}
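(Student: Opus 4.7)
The plan is to show the map $\alpha\colon \mathcal{F}_{GNis} \to \mathcal{F}_{isoEt}$ is an isomorphism by studying its kernel and cokernel presheaves, applying the vanishing result \aref{cor:shfvanish} to pass from isoEt-sheafification data back to GNis-sheafification data.

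First I would make sense of the canonical map $\alpha$. Since the isovariant \'etale topology is finer than the equivariant Nisnevich topology (every GNis cover is an isoEt cover, as noted before \aref{thm:sheafcohomologycomparison}), every isoEt sheaf is automatically a GNis sheaf; in particular $\mathcal{F}_{isoEt}$ is a GNis sheaf, so the universal property of GNis-sheafification produces $\alpha\colon \mathcal{F}_{GNis} \to \mathcal{F}_{isoEt}$ fitting into a commutative triangle with the canonical maps from $\mathcal{F}$. By the uniqueness clause of \aref{thm:eK0presheaves1}, both $\mathcal{F}_{GNis}$ and $\mathcal{F}_{isoEt}$ carry unique equivariant $K_{0}$-transfer structures compatible with the maps from $\mathcal{F}$, and $\alpha$ is a morphism of equivariant $K_{0}$-presheaves. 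Consequently, the kernel $\mathcal{K} = \ker(\alpha)$ and cokernel $\mathcal{C} = \operatorname{coker}(\alpha)$, formed in presheaves of abelian groups, inherit equivariant $K_{0}$-transfer structures and are equivariant $K_{0}$-presheaves of $\Q$-modules.

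Next I would argue $\mathcal{K}_{isoEt} = 0 = \mathcal{C}_{isoEt}$. The map $\mathcal{F} \to \mathcal{F}_{GNis}$ is a GNis-local isomorphism; since isoEt refines GNis, it is also an isoEt-local isomorphism, so applying isoEt-sheafification yields $(\mathcal{F}_{GNis})_{isoEt} \cong \mathcal{F}_{isoEt}$, and under this identification the isoEt-sheafification of $\alpha$ is the identity. Since sheafification is exact, applying $(-)_{isoEt}$ to the four-term exact sequence
\begin{equation*}
0 \to \mathcal{K} \to \mathcal{F}_{GNis} \xrightarrow{\alpha} \mathcal{F}_{isoEt} \to \mathcal{C} \to 0
\end{equation*}
shows both $\mathcal{K}_{isoEt}$ and $\mathcal{C}_{isoEt}$ vanish. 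Then \aref{cor:shfvanish} gives $\mathcal{K}_{GNis} = 0$ and $\mathcal{C}_{GNis} = 0$.

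Finally, I would apply the exact functor $(-)_{GNis}$ to the same four-term sequence. Since $\mathcal{F}_{GNis}$ is a GNis sheaf and $\mathcal{F}_{isoEt}$ is also a GNis sheaf, the middle two terms are unchanged, while the vanishing in the previous step replaces $\mathcal{K}_{GNis}$ and $\mathcal{C}_{GNis}$ by zero. This forces $\alpha$ itself to be an isomorphism.

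The main obstacle is the bookkeeping around the two sheafification functors: one must correctly verify that $(\mathcal{F}_{GNis})_{isoEt} = \mathcal{F}_{isoEt}$ and that $\mathcal{F}_{isoEt}$ is already a GNis sheaf, so that sheafifying the SES in each topology produces the claimed identifications. Everything else reduces to exactness of sheafification and the transfer-based vanishing statement \aref{cor:shfvanish}.
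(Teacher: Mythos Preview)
Your proposal is correct and follows essentially the same strategy as the paper: show the kernel and cokernel are equivariant $K_0$-presheaves of $\Q$-modules with vanishing isoEt-sheafification, then invoke \aref{cor:shfvanish} and exactness of sheafification. The only minor difference is that the paper applies this to the map $\mathcal{F}\to\mathcal{F}_{isoEt}$ (which is a $K_0$-presheaf map directly by \aref{thm:eK0presheaves1}) rather than to $\alpha\colon\mathcal{F}_{GNis}\to\mathcal{F}_{isoEt}$, thereby sidestepping the extra step of verifying that $\alpha$ itself respects transfers; after GNis-sheafifying and using $(\mathcal{F}_{isoEt})_{GNis}=\mathcal{F}_{isoEt}$ one arrives at the same conclusion.
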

\begin{proof}
Write $\mathcal{G}$ for the presheaf kernel or cokernel of $\mathcal{F}\to (\mathcal{F})_{isoEt}$.  By the previous theorem these are equivariant $K_{0}$-presheaves and so $\mathcal{G}$ is as well.
Since $\mathcal{G}_{isoEt}=0$, \aref{cor:shfvanish} implies that $\mathcal{G}_{GNis} = 0$ as well. 
\end{proof}

\subsection{Descent for rationalized equivariant \texorpdfstring{$K$}{K}-theory}
Now we show that rationalized equivariant algebraic $K$-theory $\mathcal{K}_{G}(-)_{\Q}$ satisfies descent in the isovariant \'etale topology.
Let $\tau$ be a Grothendieck topology on the category $\mathcal{C}$ which we assume has enough points. Let $\mathcal{F}$ be a presheaf of spectra on $\mathcal{C}$. Write $\mathcal{Q}_{\tau}$ for a fibrant replacement functor in any of the $\tau$-local model structures on presheaves of spectra on $\mathcal{C}$. 
Replacing the \'etale topology by the $\tau$-topology in the construction in \cite[Section 6.1]{Jardine:genet}  leads to a
conditionally convergent spectral sequence  
\begin{equation*}
E_{2}^{p,q}
=
H^{p}_{\tau}(X,(\pi_{q}\mathcal{F})_{\tau})
\Longrightarrow
\pi_{q-p}\mathcal{Q}_{\tau}\mathcal{F}
\end{equation*}
called the \textsl{{$\tau$}-descent spectral sequence}. 

\begin{thm}
\label{thm:ieNrationalizedKtheory}
Let $k$ be a field and $G$  a finite group.
The rationalized $G$-equivariant $K$-theory presheaf 
$\mathcal{K}_{G}(-)_{\mathbb{Q}}$ satisfies descent in the fixed point Nisnevich and in the isovariant \'etale topologies on $\Sm^{G}_{k}$. 
\end{thm}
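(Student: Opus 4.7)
Set $\sF := \sK^G(-)_\Q$. The plan is to prove isovariant \'etale descent for $\sF$ by comparing, via the canonical map $\sF \to \mathcal{Q}_{isoEt}\sF$, the equivariant Nisnevich descent spectral sequences of both sides. Fixed point Nisnevich descent then follows automatically: since the isovariant \'etale topology is finer than the fixed point Nisnevich topology, $\mathcal{Q}_{isoEt}\sF$ is also fpNis-fibrant, so a schemewise equivalence $\sF \simeq \mathcal{Q}_{isoEt}\sF$ exhibits $\sF$ as satisfying fpNis descent.

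First, both $\sF$ and $\mathcal{Q}_{isoEt}\sF$ satisfy GNis descent. For $\sF$: by \aref{thm:NIS-DESC1}, $\sK^G$ is equivariant Nisnevich excisive, and excision persists after rationalization. For $\mathcal{Q}_{isoEt}\sF$: isoEt-fibrancy implies GNis-fibrancy since the GNis topology is coarser. Thus the GNis descent spectral sequence
\[
E_2^{p,q} = H^p_{GNis}(X, (\pi_q \mathcal{E})_{GNis}) \Longrightarrow \pi_{q-p}\mathcal{E}(X)
\]
exists and converges strongly (via the vanishing \aref{cor:Nis-Dim}) for both $\mathcal{E} = \sF$ and $\mathcal{E} = \mathcal{Q}_{isoEt}\sF$.

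Next, identify the $E_2$ pages under the morphism of spectral sequences induced by $\sF \to \mathcal{Q}_{isoEt}\sF$. For $\mathcal{E} = \sF$ one has $(\pi_q\sF)_{GNis} = (K_q^G(-)_\Q)_{GNis}$. For $\mathcal{E} = \mathcal{Q}_{isoEt}\sF$, the Mayer--Vietoris property of fibrant presheaves of spectra makes $\pi_q \mathcal{Q}_{isoEt}\sF$ an isoEt-sheaf (in particular a GNis-sheaf, so sheafification is the identity); moreover $\sF \to \mathcal{Q}_{isoEt}\sF$ being an isoEt-local equivalence identifies $\pi_q\mathcal{Q}_{isoEt}\sF = (\pi_q\sF)_{isoEt} = (K_q^G(-)_\Q)_{isoEt}$. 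Now invoke \aref{cor:eK0presheaves2}: the presheaf $K_q^G(-)_\Q$ is an equivariant $K_0$-presheaf of $\Q$-modules (rationalized equivariant $K$-groups receive $K_0$-transfers through the $\mathcal{P}_G(X,Y)$ construction), whence $(K_q^G(-)_\Q)_{GNis} \cong (K_q^G(-)_\Q)_{isoEt}$. Thus the morphism of $E_2$ pages is an isomorphism.

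Strong convergence then forces the map of abutments $\pi_{q-p}\sF(X) = K_{q-p}^G(X)_\Q \to \pi_{q-p}\mathcal{Q}_{isoEt}\sF(X)$ to be an isomorphism for every $X \in \Sm_k^G$, which is precisely isoEt descent; fpNis descent follows as explained above. The main technical ingredient is \aref{cor:eK0presheaves2}, which reconciles the GNis and isoEt sheafifications of rationalized equivariant $K$-groups via the equivariant $K_0$-transfer formalism; given this, together with the standard sheaf property of homotopy presheaves of fibrant spectrum-valued presheaves on a cd-site, the remainder is formal from GNis descent (\aref{thm:NIS-DESC1}) and bounded GNis cohomological dimension (\aref{cor:Nis-Dim}).
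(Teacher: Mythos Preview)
There is a genuine gap. The step ``the Mayer--Vietoris property of fibrant presheaves of spectra makes $\pi_q\mathcal{Q}_{isoEt}\sF$ an isoEt-sheaf'' fails: homotopy presheaves of locally fibrant presheaves of spectra are \emph{not} sheaves in general. For a concrete counterexample, take the local fibrant replacement of the constant presheaf $H\Z$ on opens of $S^1$; then $\pi_{-1}$ of the fibrant replacement is $U\mapsto H^1(U;\Z)$, which is nonzero on $S^1$ but vanishes on any cover by intervals, so it is not even separated. The isovariant \'etale topology is not given by a $cd$-structure here, so no Mayer--Vietoris argument is available either. Without this sheaf claim you cannot identify $(\pi_q\mathcal{Q}_{isoEt}\sF)_{GNis}$ with $(K^G_q(-)_\Q)_{isoEt}$: the isoEt-local equivalence $\sF\to\mathcal{Q}_{isoEt}\sF$ only tells you the \emph{isoEt}-sheafifications of the homotopy presheaves agree, not the GNis-sheafifications. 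Trying instead to pass from the GNis- to the isoEt-sheafification of $\pi_q\mathcal{Q}_{isoEt}\sF$ via \aref{cor:eK0presheaves2} would require that this presheaf carry equivariant $K_0$-transfers, and fibrant replacement has no reason to preserve that structure.

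The paper's argument avoids this by comparing the three $\tau$-descent spectral sequences ($\tau\in\{GNis,\,fpNis,\,isoEt\}$) all applied to the \emph{same} presheaf $\sF$. The $E_2$-pages are then $H^p_\tau\bigl(X,(K^G_q(-)_\Q)_\tau\bigr)$, and these are identified using \emph{both} \aref{cor:eK0presheaves2} (to match the coefficient sheaves across topologies) \emph{and} \aref{thm:sheafcohomologycomparison} (to match the cohomology groups $H^*_{GNis}\cong H^*_{fpNis}\cong H^*_{isoEt}$ with those common coefficients). Your approach omits \aref{thm:sheafcohomologycomparison}, and this is exactly the missing ingredient; once you try to compare $E_2$-pages honestly you are forced to compare cohomology in different topologies, not just sheafifications.
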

\begin{proof}
We compare the descent spectral sequences for equivariant $K$-theory in the equivariant Nisnevich, fixed point Nisnevich, and isovariant \'etale topologies
$$
\xymatrix{
E_{2}^{p,q}
=
H^{p}_{GNis}(X, (K_{q}^{G}(-)_{\mathbb{Q}})_{GNis})
\Longrightarrow
\pi_{q-p}\mathcal{Q}_{GNis}\mathcal{K}_{G}(X)_{\mathbb{Q}}\ar[d] \\
E_{2}^{p,q}
=
H^{p}_{fpNis}(X, (K_{q}^{G}(-)_{\mathbb{Q}})_{fpNis})
\Longrightarrow
\pi_{q-p}\mathcal{Q}_{fpNis}\mathcal{K}_{G}(X)_{\mathbb{Q}}\ar[d] \\
E_{2}^{p,q}
=
H^{p}_{isoEt}(X, (K_{q}^{G}(-)_{\mathbb{Q}})_{isoEt})
\Longrightarrow 
\pi_{q-p}\mathcal{Q}_{isoEt}\mathcal{K}_{G}(X)_{\mathbb{Q}}.
}
$$
  \aref{thm:sheafcohomologycomparison} and  \aref{cor:eK0presheaves2} combined imply that on the $E_{2}$-pages the vertical arrows are isomorphisms. 
Since these are comparisons of conditionally convergent spectral sequences, we conclude that 

\begin{equation*}
\pi_{q-p}\mathcal{Q}_{GNis}\mathcal{K}_{G}(X)_{\Q} \xrightarrow{\cong} 
\pi_{q-p}\mathcal{Q}_{fpNis}\mathcal{K}_{G}(X)_{\Q} \xrightarrow{\cong}
\pi_{q-p}\mathcal{Q}_{isoEt}\mathcal{K}_{G}(X)_{\Q}.
\end{equation*}
Since 
 equivariant algebraic $K$-theory satisfies descent in the equivariant Nisnevich topology
 we have an isomorphism $K_{q-p}^{G}(X)_{\Q} =
 \pi_{q-p}\mathcal{Q}_{GNis}\mathcal{K}_{G}(X)_{\Q}$ and the result follows.
\end{proof}

\section{Equivariant homotopical purity and blow-up theorems}
\label{section:EPT}
Throughout this section, $k$ is a perfect field and $G$ is a finite constant group scheme whose order is coprime to ${\rm char}(k)$.
The homotopy purity theorem \cite[Theorem~3.2.23]{MV} is one of the 
most important tools in motivic homotopy theory, 
e.g., in the construction of Gysin long exact sequences and for Poincar{\'e} 
duality.
The {\sl equivariant Thom space} of a $G$-equivariant vector bundle $E \to X$ in $\Sm^G_S$ 
is the pointed motivic $G$-space 
$$
\Th(E):= E/{(E \setminus X)}
$$
where $X \inj E$ is the zero section. The equivariant version of the homotopical purity theorem is the assertion that if $Z\subseteq X$ is a closed, invariant smooth subscheme of a smooth $G$-scheme $X$ then there is a natural isomorphism in 
${\Ho}^G_{\bullet}(k)$
$$
X/X\setminus Z \iso \Th(N_{Z/X}).
$$

We show in \aref{thm:Purity} below that the equivariant homotopical purity theorem holds when $G$ is abelian and $k$ has enough roots of unity. The method of proof is an equivariant version of Morel-Voevodsky's argument in \cite{MV} in the nonequivariant case. As such, the key geometric input we need to establish the equivariant homotopical purity theorem is
that locally a closed inclusion of smooth $G$-schemes looks like an inclusion of representations. This is a delicate statement as can be seen by contemplating tangent representations.
Given a point $x\in X$, the tangent space of $X$ at $x$ is the $k(x)$-vector space $T_xX := \Hom_{k(x)}(\Omega_{X/k,x}\otimes k(x), k(x))$. Note that if $X$ has a $G$-action, then there is an induced $k(x)$-linear action of the stabilizer $G_{x}$ on $T_xX$, i.e., the tangent space $T_{x}X$ is naturally a $G_{x}$-representation over $k(x)$ for any $x\in X$. In fact, $T_{x}X$ has even more structure, namely that of a module over the twisted group ring $k(x)^{\#}[S_{x}]$. The subtleties that arise in establishing local linearization of smooth pairs (and hence in establishing the equivariant homotopical purity theorem) arise from these extra structures and the fact that linearizations in the equivariant Nisnevich topology are sensitive to them.

\subsection{Linearization of smooth pairs}

Recall that the \textsl{exponent}  of a finite group $G$ is the least common multiple of the orders of its elements. 

\begin{lem}[{\cite[Lemma 8.10]{HVO}}]\label{lem:HVO}
 Let $k$ be a perfect field and $G$ an abelian group whose order is coprime to $k$ and suppose that $k$ is a perfect field which contains a primitive $d$th-root of unity, where $d$ is the exponent of $G$. Let $Z\hookrightarrow X$ be an equivariant closed embedding of smooth affine 
 $G$-schemes over $k$ and let $x\in Z$ be a closed point.  
 Then there are $G$-representations $W_1$, $W_2$, an embedding of representations $W_{2}\subseteq W_1$,  an invariant open neighborhood $U$ of $x$, and an equivariant cartesian diagram 
 $$
 \xymatrix{
 U\cap Z \ar[r]\ar[d] & U \ar[d]^{f} \\
 W_{2} \ar[r] & W_{1}
 }
 $$
 such that $f$ is \'etale.
\end{lem}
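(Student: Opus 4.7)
The plan is to first linearize the pair near $x$ with respect to its stabilizer, and then induce up to a $G$-equivariant map. Let $H = S_x \subseteq G$ denote the set-theoretic stabilizer of $x$; since $G$ is abelian, every conjugate of $H$ equals $H$, which will make the orbit construction clean.

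First I would replace $X$ by an $H$-invariant affine open neighborhood of $x$, which exists since $X$ is affine and $H$ is finite (intersect the $H$-translates of an arbitrary affine neighborhood). Write $\mathfrak{m}_x \subseteq \mathcal{O}_{X,x}$ for the maximal ideal and $I_Z \subseteq \mathcal{O}_{X,x}$ for the ideal of $Z$. Both $\mathfrak{m}_x/\mathfrak{m}_x^2$ and its quotient $\mathfrak{m}_x/(I_Z+\mathfrak{m}_x^2)$ are $H$-modules, and smoothness of the pair makes the second a direct summand of the first over $k(x)$. Since $|H|$ is coprime to $\mathrm{char}(k)$, Maschke's theorem gives an $H$-equivariant splitting
\begin{equation*}
\mathfrak{m}_x/\mathfrak{m}_x^2 \;\cong\; (I_Z+\mathfrak{m}_x^2)/\mathfrak{m}_x^2 \;\oplus\; C,
\end{equation*}
with $C$ identified with $(T_xZ)^{\vee}$. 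The roots-of-unity hypothesis on $k$ lets us further decompose each of these $H$-modules into a direct sum of one-dimensional characters defined over $k$.

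Next I would lift a basis of $H$-eigenfunctions from these two summands to regular functions on the $H$-invariant neighborhood, by averaging any set-theoretic lift against the characters (the usual Reynolds projector). Eigenfunctions lifted from $(I_Z+\mathfrak{m}_x^2)/\mathfrak{m}_x^2$ can be chosen inside $I_Z$. Packaging these functions as coordinates produces an $H$-equivariant morphism $\varphi\colon X_0 \to V_1$, where $V_1$ is an $H$-representation over $k$ and the chosen normal coordinates define an $H$-subrepresentation $V_2 \subseteq V_1$ with $\varphi(Z\cap X_0) \subseteq V_2$. By construction the cotangent map of $\varphi$ at $x$ is an isomorphism, so after shrinking $X_0$ the map $\varphi$ is étale, and the square $(X_0,Z\cap X_0)\to (V_1,V_2)$ is cartesian (equivalently: $I_Z$ is generated by pullbacks of the normal coordinates, by Nakayama).

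Finally I would induce up to $G$. Set $U := G\times^H X_0$ and $W_i := G\times^H V_i$ for $i=1,2$; since $G$ is abelian, $W_i$ is naturally a $G$-representation over $k$ (it is $\mathrm{Ind}_H^G V_i$) and $U$ is an invariant open neighborhood of the $G$-orbit of $x$ in $X$. The morphism $\varphi$ induces a $G$-equivariant étale map $f\colon U \to W_1$ with $f^{-1}(W_2) = U\cap Z$, which is the claim.

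The main obstacle is Step 2, because $x$ need not be $k$-rational: $T_xX$ is only a $k(x)$-vector space and $H$ acts on it semilinearly through its action on $k(x)$, so a priori the irreducible summands of $T_xX$ live over $k(x)$, not $k$. The roots-of-unity hypothesis (on the exponent $d$ of $G$, which bounds the exponent of $H$ and of $H$ acting on $k(x)$) is exactly what guarantees that the characters needed to split the cotangent space descend to $k$, so that the target $V_1$ is a genuine $k$-representation and induction to $W_1 = \mathrm{Ind}_H^G V_1$ makes sense as a $G$-representation over $k$.
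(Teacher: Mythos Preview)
The paper does not give a self-contained proof here; it simply cites \cite[Lemma 8.10]{HVO} and records, via Brauer's theorem, that the root-of-unity hypothesis forces every irreducible $k[G]$-module to be one-dimensional, which is the standing assumption in that reference. So you are supplying considerably more than the paper does, and your overall architecture---linearize $H$-equivariantly at $x$, then induce up to $G$---is the right one.

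There is, however, a genuine gap in Step~2 when $x$ is not $k$-rational. You decompose $\mathfrak m_x/\mathfrak m_x^2$ into one-dimensional $k[H]$-summands and package a basis of eigenfunctions into $\varphi:X_0\to V_1$. But that decomposition produces $n\cdot[k(x):k]$ summands, so $\dim_k V_1=n[k(x):k]$ while $\dim X_0=n$; a morphism between smooth $k$-schemes of unequal dimension is never \'etale. What you actually need is a $k(x)$-basis (not merely a $k$-basis) of $\mathfrak m_x/\mathfrak m_x^2$ consisting of $H$-eigenvectors with eigenvalues in $k^{\times}$. Your final paragraph correctly flags non-rational points as the crux, but ``the characters descend to $k$'' solves a different problem: it guarantees the eigenvalues lie in $k$, not that $n$ eigenvectors can be chosen $k(x)$-independently. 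The missing step is short once isolated. The twisted group ring $k(x)^{\#}[H]$ is semisimple (average a $k(x)$-linear section by $|H|^{-1}\sum_h h(-)h^{-1}$), and for any nonzero $H$-eigenvector $v$ the line $k(x)\cdot v$ is $H$-stable because $h(\lambda v)=h(\lambda)\chi(h)v$. One then builds a $k(x)$-basis of eigenvectors by induction on $\dim_{k(x)}$, splitting off $k(x)\cdot v$ at each stage. Equivalently, this is Galois descent for the faithful part $H/G_x$ of the action on $k(x)$, combined with the character decomposition for the $k(x)$-linear action of $G_x$.

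A smaller point in Step~3: identifying $G\times^{H}X_0$ with an invariant open of $X$ requires first shrinking $X_0$ so that the translates $gX_0$ for $g$ ranging over distinct $H$-cosets are pairwise disjoint; otherwise the action map $G\times^{H}X_0\to X$ is only \'etale, not an open immersion.
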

\begin{proof}
 This is \cite[Lemma 8.10]{HVO}. In the beginning of the proof of Theorem 8.11 of loc.~cit.~ it is verified that the hypothesis of 
 the cited Lemma 8.10 are satisfied 
 when all irreducible $k[G]$-modules are one dimensional. By a classical theorem of Brauer 
 \cite[Theorem 41.1, Corollary 70.24]{CR}, the condition that $k$ contains a primitive $d$th root of unity implies all irreducible $k[G]$-modules are one dimensional.
\end{proof}

Let $ Z \inj X$ be an invariant closed subscheme of a $G$-scheme. 
An {\sl equivariant Nisnevich neighborhood} of $(X, Z)$ is a commutative square in $\Sch^G_S$
\begin{equation*}
\xymatrix@C1pc{
Z' \ar[r]^{i'} \ar[d]_{\iso} & U \ar[d]^{f} \\
Z \ar[r]_{i} & X,}
\end{equation*}
where $f$ is an equivariant {\'e}tale map. 
We denote such a neighborhood simply by $(U, Z)$. 
If this square is cartesian we call $(U, Z)$ a {\sl distinguished} equivariant Nisnevich neighborhood of $(X, Z)$.

\begin{defn}\label{defn:EN-Linearization}
Let $Z \inj X$ be a closed immersion in $\Sm^G_k$.
An \textsl{equivariant Nisnevich linearization} of the pair $(X, Z)$ consists of a smooth $G$-scheme $U$, an equivariant closed immersion $Z\subseteq U$, a $G$-equivariant vector bundle $E\to Z$, and
a pair of equivariant \'etale maps $p:U\to X$ and $q:U\to E$ such that
\begin{equation*}
(X, Z) \xleftarrow{p} (U, Z) \xrightarrow{q} (E, Z)
\end{equation*}
are both distinguished equivariant Nisnevich neighborhoods (here $Z\subseteq E$ is the zero-section).
\end{defn}   

\begin{prop}\label{prop:Lin-abpt}
Let $Z \inj X$ be a closed immersion in $\Sm^G_k$  with $X$ quasi-projective, 
and $G$ and $k$ as in the previous lemma. Let $x\in Z$ be a closed point.
There is a $G$-invariant open neighborhood $U\subseteq X$ of $x$ such that the
pair $(U, U\cap Z)$ admits an equivariant Nisnevich linearization.
\end{prop}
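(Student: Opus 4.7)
My plan is to adapt Morel-Voevodsky's construction of local linearizations \cite[Lemma~3.2.28]{MV} to the equivariant setting, using Lemma~\ref{lem:HVO} as the crucial geometric input.

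First I would reduce to the case that $X$ is affine: since $X$ is quasi-projective and $G$ is finite, the orbit $G \cdot x$ lies in an affine open, and intersecting its finitely many $G$-translates produces a $G$-invariant affine open neighborhood; replace $X$ by this neighborhood. Applying Lemma~\ref{lem:HVO} at $x$ then yields a $G$-invariant open $U \subseteq X$ containing $x$, representations $W_2 \subseteq W_1$, and an equivariant \'etale morphism $f\colon U \to W_1$ with $Z' := U \cap Z = f^{-1}(W_2)$. Maschke's theorem (using that $|G|$ is coprime to $\mathrm{char}(k)$) provides an equivariant splitting $W_1 \cong W_2 \oplus N$, and I would take $E := Z' \times N$, the trivial $G$-equivariant vector bundle over $Z'$ with fiber $N$; since $f$ is \'etale and $N_{W_2/W_1} \cong W_2 \times N$ is trivial, there is a natural identification $E \cong N_{Z'/U}$.

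The bridge between $U$ and $E$ is the equivariant \'etale morphism $\phi\colon E \to W_1$ defined by $(z, n) \mapsto (f(z), n)$, whose preimage of $W_2$ is precisely the zero section $Z' \subseteq E$. I would then form the fiber product
$$V_0 := U \times_{W_1} E,$$
with equivariant \'etale projections $p_0\colon V_0 \to U$ and $q_0\colon V_0 \to E$, and note the canonical diagonal closed immersion $\Delta\colon Z' \hookrightarrow V_0$ sending $z \mapsto (z, (z, 0))$.

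The main obstacle is that the fibers of $V_0$ over $Z'$ typically contain extra components beyond $\Delta(Z')$. A direct computation identifies
$$V_0 \times_U Z' \,\cong\, Z' \times_{W_2} Z' \,\cong\, V_0 \times_E Z',$$
under which $\Delta(Z')$ corresponds to the diagonal. Because $Z' \to W_2$ is \'etale (as a base change of $f$) and separated, this diagonal is open and closed in $Z' \times_{W_2} Z'$; its complement $C$ is $G$-invariant and closed in $V_0$ (as $V_0 \times_U Z'$ is closed there). Setting $V := V_0 \setminus C$ yields a smooth $G$-invariant open containing $\Delta(Z')$ with $V \times_U Z' = V \times_E Z' = \Delta(Z')$. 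Then $V$, together with $\Delta\colon Z' \hookrightarrow V$, the bundle $E \to Z'$, and the restricted projections $p\colon V \to U$ and $q\colon V \to E$, satisfies all the requirements of Definition~\ref{defn:EN-Linearization}: the cartesian conditions are exactly the identifications just derived, and \'etaleness of $p$ and $q$ is inherited from $V_0$.
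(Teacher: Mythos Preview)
Your proposal is correct and follows essentially the same construction as the paper's proof: reduce to the affine case, apply Lemma~\ref{lem:HVO} to obtain an \'etale map to a pair of representations $(W_1,W_2)$, split $W_1\cong W_2\oplus N$, form the fiber product $U\times_{W_1}(Z'\times N)$, and excise the complement of the diagonal in $Z'\times_{W_2}Z'$. The only differences are notational (your $V_0$, $V$, $E$ are the paper's $X'$, $X''$, $Z\times N$) and that you make the use of Maschke's theorem and the computation of the fibers over $Z'$ more explicit.
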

\begin{proof}
The construction proceeds as in the nonequivariant case in \cite{MV}.  
Since $X$ is quasi-projective, $x$ has an invariant affine neighborhood and so we may assume that $X$ is affine.  
By \aref{lem:HVO}, we may further shrink $X$ equivariantly around $x$ and assume that we have a $G$-equivariant cartesian square 
 $$
 \xymatrix{
 Z\ar[r]\ar[d]_{f'} & X \ar[d]^{f} \\
 W_2\ar[r] & W_1,
 }
 $$  
 where $W_2\subseteq W_1$ is an inclusion of $G$-representations and the vertical maps are \'etale. Write $N:=W_1/W_2$ for the quotient representation. The $G$-representation $W_1$ is isomorphic to a direct sum $W_1 = N \times W_2$. We thus obtain a $G$-equivariant 
\'etale map $f'\times \id: Z\times N \to W_1$. Define $X':=X\times_{W_1}(Z\times N)$. Then 
 $X'\to X$ and $X'\to Z\times N$ are $G$-equivariant, \'etale maps. The preimages of $Z$ and $Z\times 0$ coincide and are equal to $Z':= Z\times_{W_2}Z$. The equivariant, \'etale map $Z'\to Z$ has an equivariant section, given by the diagonal $\Delta_{Z}$, which implies an equivariant decomposition $Z' = \Delta(Z) \coprod C$. 
 Now define $X'':= X'\setminus C$ which is an invariant open subscheme of $X'$. The induced maps $X'' \to X$ and $X''\to Z\times N$ are equivariant and \'etale. Moreover, the preimage of $Z$ and $Z\times 0$ are both equal to $Z$. Therefore
 $$
 (X, Z) \leftarrow (X'', Z)  \rightarrow (Z\times N, Z)
 $$
yields the desired equivariant Nisnevich linearization. 
\end{proof}

\subsection{Deformation to the normal cone}
Let $B(X,Z)$ denote the blow-up of $X \times \A^1$ along the
$G$-invariant closed subscheme $Z \times \{0\}$ (where as usual $\A^{1}$ is considered to have trivial action). 
It is straightforward to check that the $G$-action on $X$ induces one on the smooth scheme $B(X,Z)$ and that the blow-up map
$f: B(X,Z) \to X \times \A^1$ is equivariant. 
There are inclusions of closed pairs
in $\Sm^G_S$
\begin{equation}\label{eqn:Blow-up}
(X, Z) \xrightarrow{i_1} (B(X, Z), Z \times \A^1) \xleftarrow{i_0}
(\P(N_{Z/X} \times \A^1), Z).
\end{equation}
Here the inclusion in the last pair is 
$Z \inj N_{Z/X} = \P(N_{Z/X} \times \A^1) \setminus \P(N_{Z/X})$.
If $E$ is an equivariant vector bundle on $Y$, 
then by a straightforward equivariant version of \cite[Proposition~3.2.17]{MV}, 
we have
$$
\Th(E) \simeq \frac{\P(E \times \A^1)}{
\P(E \times \A^1) \setminus Y}.
$$ 
Therefore from the morphisms of pairs above, we obtain  monomorphisms of pointed motivic $G$-spaces
\begin{equation}\label{eqn:Blow-up-1}
\alpha_{X,Z} : \frac{X}{X \setminus Z} \to \frac{B(X, Z)}{B(X, Z) \setminus
(Z \times \A^1)} ;
\end{equation}
\[
\beta_{X, Z} : {\rm Th}(N_{Z/X}) \to
\frac{B(X, Z)}{B(X, Z) \setminus (Z \times \A^1)}.
\] 

We now state our equivariant homotopical purity theorem.
We anticipate that the equivariant homotopical purity theorem remains valid as long as the order of the group is coprime to the characteristic of $k$.
However, 
as mentioned in the beginning of this section,
the equivariant linearization techniques we use are delicate and require additional hypotheses.

\begin{thm}\label{thm:Purity}
Let $k$ be a perfect field and $G$ a finite abelian group whose order is prime to 
$\mathrm{char}(k)$. Suppose further that $k$ contains a primitive $d$th root of unity, where $d$ is the exponent of $G$.
Then for any closed immersion $Z\hookrightarrow X$  in $\Sm^{G}_{k}$,  
the maps $\alpha_{X,Z}$ and $\beta_{X,Z}$ are equivariant motivic weak equivalences. In particular, there is a canonical isomorphism in 
${\Ho}^G_{\bullet}(k)$ of pointed motivic $G$-spaces
\[
X/{(X \setminus Z)} 
\xrightarrow{\iso}
{\rm Th}(N_{Z/X}).
\]
\end{thm}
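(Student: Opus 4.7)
The plan is to follow the Morel--Voevodsky strategy from \cite{MV}, suitably equivariantized, with the key geometric input provided by \aref{prop:Lin-abpt} (equivariant Nisnevich linearization). The argument proceeds in two main stages: first, an equivariant Nisnevich excision argument shows that both $\alpha_{X,Z}$ and $\beta_{X,Z}$ are motivic weak equivalences whenever one can pass to an equivariant Nisnevich linearization of $(X,Z)$; second, a direct computation handles the linearized case $(E,Z)$, where $E\to Z$ is an equivariant vector bundle and $Z\hookrightarrow E$ is the zero section.

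First I would prove an equivariant excision lemma: if $(U,Z)\to (X,Z)$ is a distinguished equivariant Nisnevich neighborhood, then the induced map $U/(U\setminus Z)\to X/(X\setminus Z)$ is a motivic weak equivalence. This follows by noting that the distinguished equivariant Nisnevich square
\begin{equation*}
\xymatrix{
U\setminus Z \ar[r]\ar[d] & U\ar[d] \\
X\setminus Z \ar[r] & X
}
\end{equation*}
is a homotopy pushout in the local (hence motivic) model structure by \aref{cor:Sheaf-Nis} and the definition of the motivic model structure, so collapsing the leftmost column yields the claimed equivalence. Applied to both arrows of the linearization zig-zag provided by \aref{prop:Lin-abpt}, namely $(X,Z)\xleftarrow{p}(U,Z)\xrightarrow{q}(E,Z)$ with $E=N_{Z/X}$ (after shrinking $X$ equivariantly around closed points of $Z$, which suffices by a standard argument using quasi-compactness and induction over a finite covering of $Z$), this reduces the theorem to showing that $\alpha_{E,Z}$ and $\beta_{E,Z}$ are motivic weak equivalences for the zero section inclusion into an equivariant vector bundle.

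To handle the linear case, I would run the deformation-to-the-normal-cone computation equivariantly. For $E\to Z$ an equivariant vector bundle, the blow-up $B(E,Z)$ of $E\times \A^1$ along $Z\times\{0\}$ admits equivariant projections onto the endpoints of~\eqref{eqn:Blow-up}: at $t=1$ we recover $(E,Z)$, while at $t=0$ we get $(\P(N_{Z/E}\times\A^1),Z)=(\P(E\times\A^1),Z)$, and $N_{Z/E}=E$ canonically. Both $\alpha_{E,Z}$ and $\beta_{E,Z}$ then fit into diagrams where the non-horizontal maps are compositions of equivariant vector bundle projections onto $Z$ (or onto suitably trivialized pieces), which are motivic weak equivalences by \aref{prop:Hom-inv}. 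The $\A^1$-invariance applied fibrewise together with excision along the zero section then collapses the deformation, yielding equivariant versions of \cite[Propositions 3.2.24, 3.2.29]{MV}; the final isomorphism $X/(X\setminus Z)\simeq \Th(N_{Z/X})$ in $\Ho^G_\bullet(k)$ is obtained by composing $\alpha_{X,Z}$ with the inverse of $\beta_{X,Z}$.

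The main obstacle is controlling the equivariant Nisnevich linearization step: unlike the non-equivariant case, one cannot always straighten a smooth pair to an inclusion of representations, because the tangent representation at a point $x$ is only a module over the twisted group ring $k(x)^{\#}[S_x]$ rather than a $G$-representation in a useful sense. This is exactly where the hypotheses that $G$ is abelian and that $k$ contains a primitive $d$-th root of unity (forcing every irreducible $k[G]$-module to be one-dimensional) are used, via the Brauer-type input behind \aref{lem:HVO}. Granting this local linearization, the rest is a direct equivariant adaptation of the Morel--Voevodsky argument, with the equivariant Nisnevich topology arranged in \aref{section:CD-structure} precisely so that the excision and descent steps go through unchanged.
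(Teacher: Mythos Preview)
Your proposal is correct and takes essentially the same Morel--Voevodsky approach as the paper: reduce via equivariant Nisnevich excision and linearization to the case of the zero section in an equivariant vector bundle, then verify that case directly via the deformation space.

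One point worth flagging: the globalization step, which you summarize as ``a standard argument using quasi-compactness and induction over a finite covering of $Z$,'' is where the paper does something more explicit. The paper first passes to an affine equivariant Nisnevich cover $Y\to X$ via \aref{lem:affinenbd} (needed because \aref{prop:Lin-abpt} requires quasi-projectivity), then takes the disjoint union $U=\coprod U_i$ of linearizable opens in $Y$, and finally uses \v{C}ech resolutions (\aref{lem:Purity-we}, \aref{cor:EHP-reduction1}) to descend from the cover $U\to X$ back to $X$. The first of these steps is genuinely Nisnevich rather than Zariski, so your single-neighborhood excision lemma and a Zariski Mayer--Vietoris induction do not by themselves suffice; you need either the \v{C}ech argument or an induction over distinguished equivariant Nisnevich squares. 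Also, your excision lemma should be stated not just for $X/(X\setminus Z)$ but simultaneously for the deformation space $B(X,Z)/(B(X,Z)\setminus Z\times\A^1)$ and the Thom space, which works because blow-ups and normal bundles commute with \'etale base change.
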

From the construction, the equivariant purity isomorphism has the following naturality property.
\begin{proposition}
Let $Z\subseteq X$ be a closed immersion in $\Sm_{k}^{G}$ and
 $f:X^{\prime}\rightarrow X$ a map in $\Sm_k^{G}$ such that $Z^{\prime}=f^{-1}(Z)$ is smooth and
$\sN_{Z'/X'}\rightarrow f^{\ast}\sN_{Z/X}$ is an isomorphism. 
Then $f$ induces a commutative square in $\Ho_{\bullet}^{G}(k)$, where the vertical arrows are the isomorphisms from the previous theorem
\begin{equation*}
\xymatrix{
X^{\prime}/(X^{\prime}\setminus Z^{\prime}) 
\ar[r]\ar[d]_{\iso} & 
X/(X\setminus Z) \ar[d]^{\iso} \\
\Th(\sN_{Z'/X'}) \ar[r] & 
\Th(\sN_{Z/X}).
}
\end{equation*}
\end{proposition}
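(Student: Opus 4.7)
The plan is to lift the deformation-to-the-normal-cone diagram used in the proof of \aref{thm:Purity} to a morphism between the diagrams for $(X,Z)$ and $(X',Z')$, and then invert the weak equivalences $\alpha$ and $\beta$ in $\Ho^G_\bullet(k)$.  First I would observe that since $f:X'\to X$ sends $Z'=f^{-1}(Z)$ to $Z$ equivariantly, the map $f\times\id_{\A^1}$ sends $Z'\times\{0\}$ to $Z\times\{0\}$.  The condition on normal bundles ensures that $Z'$ equals $f^{-1}(Z)$ as a closed subscheme (not merely set-theoretically), so the ideal sheaf of $Z'\times\{0\}$ is generated by the pullback of the ideal sheaf of $Z\times\{0\}$.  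The universal property of blowing up then produces a canonical equivariant lift
\[
B(f)\colon B(X',Z')\longrightarrow B(X,Z),
\]
compatible with the projections to $X'\times\A^1$ and $X\times\A^1$.

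Second, I would analyze the restrictions of $B(f)$ to the two inclusions in~(\ref{eqn:Blow-up}).  Over the fiber $t=1$ of $\A^1$ the map $B(f)$ restricts to $f$ itself, which gives compatibility of $\alpha_{X',Z'}$ with $\alpha_{X,Z}$ after passing to the appropriate quotients.  Over the fiber $t=0$, the exceptional divisor of $B(X',Z')$ is $\P(\sN_{Z'/X'}\times\A^1)$ and the restriction of $B(f)$ is the map of projective bundles induced by the natural morphism $\sN_{Z'/X'}\to f^{*}\sN_{Z/X}$; by the isomorphism hypothesis this is the pullback map of projective bundles along $Z'\to Z$.  Passing to the quotients defining the Thom spaces yields compatibility of $\beta_{X',Z'}$ with $\beta_{X,Z}$.

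Third, assembling these observations and taking the functorial quotients by $B(X,Z)\setminus(Z\times\A^1)$, $X\setminus Z$, and $\sN_{Z/X}\setminus Z$ (and their primed counterparts), I get a commutative diagram of pointed motivic $G$-spaces in which the top and bottom rows are precisely the zig-zags
\[
X'/(X'\setminus Z')\xrightarrow{\alpha_{X',Z'}} B(X',Z')/(B(X',Z')\setminus(Z'\times\A^1)) \xleftarrow{\beta_{X',Z'}} \Th(\sN_{Z'/X'})
\]
and its analogue for $(X,Z)$.  \aref{thm:Purity} asserts that all four maps $\alpha_{X,Z},\alpha_{X',Z'},\beta_{X,Z},\beta_{X',Z'}$ are equivariant motivic weak equivalences; inverting the $\beta$'s in $\Ho^G_\bullet(k)$ produces the desired commutative square.

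The main obstacle is the step where $B(f)$ is identified on the exceptional divisor with the pullback map on projectivized normal cones.  This is precisely where the hypothesis $\sN_{Z'/X'}\xrightarrow{\iso} f^{*}\sN_{Z/X}$ is used: without it, the map on exceptional divisors lands in a different projective bundle and the Thom-space square fails to commute.  With the isomorphism in hand, the identification follows from the standard description of the exceptional divisor of $B(X,Z)$ as $\P(\sN_{Z/X}\oplus\mathcal{O}_Z)$ together with the functoriality of this description, so the whole argument is mostly a matter of tracking through the construction of the purity isomorphism in a pair of parallel columns.
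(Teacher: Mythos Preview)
Your proposal is correct and is precisely the argument the paper has in mind: the paper does not give a proof of this proposition at all, stating only that the naturality ``follows from the construction,'' and your write-up is exactly the unwinding of that construction---functoriality of the blow-up $B(f)$, identification of its restrictions over $t=1$ and $t=0$, and inversion of the $\alpha$'s and $\beta$'s in $\Ho^G_\bullet(k)$. One small quibble: the fact that the ideal of $Z'\times\{0\}$ is generated by the pullback of the ideal of $Z\times\{0\}$ already follows from $Z'=f^{-1}(Z)$ being the scheme-theoretic preimage, not from the normal-bundle hypothesis; the latter is used only, as you correctly isolate later, to identify $B(f)$ on the exceptional divisor with the pullback map on $\P(\sN\oplus\mathcal{O})$.
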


The proof of \aref{thm:Purity} will occupy the rest of this section.

\subsection{Purity for vector bundles}

For the moment, we let $S$ be a general finite dimensional Noetherian base scheme and $G$ a reductive group scheme over $S$.

\begin{lem}\label{lem:Thom-VB}
Let $Z\inj V$ be the zero section of a $G$-equivariant vector bundle $ V \to Z$  in $\Sm^G_S$. 
Then the maps $\alpha_{V,Z}$ and $\beta_{V,Z}$ are motivic weak equivalences.
\end{lem}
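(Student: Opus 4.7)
The plan is to compute the target quotient $B(V, Z)/(B(V, Z) \setminus (Z \times \A^1))$ explicitly, identify it with $V/(V \setminus Z)$ via a chain of equivariant motivic equivalences, and verify that $\alpha_{V,Z}$ and $\beta_{V,Z}$ both correspond to the identity under this chain. Every construction is automatically $G$-equivariant since $G$ acts trivially on $\A^1$ and the scaling action on $V$ is $G$-linear by definition of an equivariant vector bundle; consequently the argument is a direct equivariant upgrade of the nonequivariant case in \cite{MV}.

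First I would identify $N_{Z/V}$ canonically with $V$ as $G$-equivariant vector bundles on $Z$, so that both $\alpha_{V,Z}$ and $\beta_{V,Z}$ have common source $V/(V \setminus Z) = \Th(V)$ and are induced respectively by the inclusions $i_1: V \hookrightarrow B(V, Z)$ (the fiber over $t = 1$, away from the blow-up center) and $i_0: V \hookrightarrow B(V, Z)$ (the affine part $\P(V \oplus \sO_Z) \setminus \P(V)$ of the exceptional divisor).

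Next, let $D(V, Z) := B(V, Z) \setminus \widetilde{V \times \{0\}}$ be the complement of the proper transform of $V \times \{0\}$. The $G$-equivariant morphism $V \times \A^1 \to V \times \A^1$, $(v, t) \mapsto (tv, t)$, pulls back the ideal of $Z \times \{0\}$ to the principal ideal $(t)$, so by the universal property of the blow-up it lifts to a $G$-equivariant morphism $V \times \A^1 \to B(V, Z)$ which a direct verification shows is an isomorphism onto $D(V, Z)$ sending $Z \times \A^1$ to the proper transform $Z \times \A^1 \subset B(V,Z)$. Moreover, $\widetilde{V \times \{0\}}$ is disjoint from $Z \times \A^1$ in $B(V, Z)$: they meet the exceptional divisor $\P(V \oplus \sO_Z)$ in the disjoint closed subschemes $\P(V)$ and $\P(\sO_Z) \cong Z$ respectively. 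Consequently $\{D(V, Z),\, B(V, Z) \setminus (Z \times \A^1)\}$ forms an equivariant Zariski cover of $B(V, Z)$ with $Z \times \A^1$ contained entirely in $D(V, Z)$. This data assembles into a distinguished equivariant Nisnevich square, and equivariant Nisnevich excision yields natural isomorphisms
\[
B(V, Z)/(B(V, Z) \setminus (Z \times \A^1)) \;\cong\; D(V, Z)/(D(V, Z) \setminus (Z \times \A^1)) \;\cong\; (V/(V \setminus Z)) \wedge \A^1_+.
\]
Since $\A^1$ is equivariantly $\A^1$-contractible by \aref{prop:Hom-inv}, the right-hand side is equivariantly motivically equivalent to $V/(V \setminus Z)$, and under the above chain of identifications $\alpha_{V,Z}$ and $\beta_{V,Z}$ correspond to smashing $V/(V \setminus Z)$ with the inclusions $1_+, 0_+: S^0 \to \A^1_+$ respectively, both of which become the identity after contracting $\A^1$ to a point.

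The main obstacle is verifying the explicit equivariant trivialization $D(V, Z) \cong V \times \A^1$ via the lifted scaling map. The equivariance is automatic because $(v, t) \mapsto (tv, t)$ commutes with the $G$-action, but one must check the claim that the lift is a scheme-theoretic isomorphism onto $D(V,Z)$; this is a local calculation in trivializing charts for $V$. All subsequent steps---the disjointness computation inside the exceptional divisor, the excision, and the contraction of $\A^1$---are formal consequences of structure established earlier in the paper and respect the $G$-action trivially.
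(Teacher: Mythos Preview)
Your argument is correct, but it follows a genuinely different path from the paper's proof.

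The paper does not open up $B(V,Z)$ by removing the proper transform of $V\times\{0\}$ and trivializing what remains via the scaling map. Instead it uses the natural $G$-equivariant projection $\lambda_Z : B(V,Z) \to \P(V\times\A^1)$ that exhibits $B(V,Z)$ as the total space of the line bundle $\sO(1)$; by \aref{prop:Hom-inv} this projection is a motivic weak equivalence, as is its restriction to the complements of $Z\times\A^1$ and $Z$, so it induces a motivic weak equivalence
\[
q:\; \frac{B(V,Z)}{B(V,Z)\setminus(Z\times\A^1)} \;\longrightarrow\; \frac{\P(V\times\A^1)}{\P(V\times\A^1)\setminus Z}.
\]
The paper then checks that $q\circ\alpha_{V,Z}$ is the canonical identification $V/(V\setminus Z)\simeq \P(V\times\A^1)/(\P(V\times\A^1)\setminus Z)$ of \cite[Proposition~3.2.17]{MV}, and that $q\circ\beta_{V,Z}$ is the map induced by the open inclusion $V\hookrightarrow \P(V\times\A^1)$, which is an equivalence by the distinguished Nisnevich square~\eqref{eqn:Thom-VB-1}. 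Two-out-of-three then finishes. Your approach instead keeps everything affine: you identify $D(V,Z)\cong V\times\A^1$ via the lifted scaling map, use Nisnevich excision to replace $B(V,Z)$ by $D(V,Z)$, and conclude by contracting the $\A^1$ factor. This avoids invoking the $\sO(1)$ description and the projective-bundle Thom identification, trading them for the explicit deformation chart; either route is perfectly fine, and both ultimately rest on \aref{prop:Hom-inv} and an excision step.
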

\begin{proof}
We first recall that there is a natural map 
$\lambda_Z: B(V,Z) \to \P(V \times \A^1)$ 
which identifies $B(V,Z)$ with the total space of the relative line bundle $\sO(1)$.
Moreover, one has $\lambda_Z^{-1}(\P(V \times \A^1) \setminus Z) = 
B(V,Z) \setminus (Z \times \A^1)$,
where $Z   \inj \P(V \times \A^1)$ is induced by the zero section
$Z  \inj V = \P(V \times \A^1) \setminus \P(V)$.
In particular, these maps are motivic weak equivalences by
\aref{prop:Hom-inv}. We conclude that the map
\[
q: \frac{B(V,Z)}{B(V,Z) \setminus (Z \times \A^1)} \to
\frac{\P(V \times \A^1)}{\P(V \times \A^1) \setminus Z}
\]
is a motivic weak equivalence.
On the other hand, the composite $q \circ \alpha_{V,Z}$ is a canonical
equivalence of pointed motivic $G$-spaces (see \cite[Proposition~3.2.17]{MV}). 
We conclude that $\alpha_{V, Z}$ is a motivic weak equivalence.

On the other hand, the composition of the projection $\lambda_Z$ with
the inclusion $V \xrightarrow{i_0} B(V,Z)$ is the canonical open inclusion
$V \inj \P(V \times \A^1)$. Since
\begin{equation}\label{eqn:Thom-VB-1}
\xymatrix@C1pc{
V \setminus Z \ar[r] \ar[d] & V \ar[d] \\
\P(V \times \A^1) \setminus Z \ar[r] & \P(V \times \A^1)}
\end{equation}
is a distinguished equivariant Nisnevich square, it follows that the composition
\[
\frac{V}{V \setminus Z} \xrightarrow{\beta_{V,Z}} 
\frac{B(V,Z)}{B(V,Z) \setminus (Z \times \A^1)} \xrightarrow{q}
\frac{\P(V \times \A^1)}{\P(V \times \A^1) \setminus Z}
\]
is a local weak equivalence. Since $q$ is a motivic weak equivalence,
we conclude that $\beta_{V,Z}$ is a motivic weak equivalence.
\end{proof}

\begin{remk}
Purity for vector bundles holds also in the motivic homotopy theory of Deligne-Mumford stacks by the same argument. 
\end{remk}
\begin{remark}
Note that in the above situation, the equivariant purity isomorphism $V/(V\setminus Z) \iso \Th(\sN_{Z/V})$ coincides with the map defined by
the natural isomorphism $V\iso\sN_{Z/V}$.
\end{remark}
\subsection{Purity in general}
Let $G$ and $k$ be as in  \aref{thm:Purity} and 
$Z\inj X$ be a closed immersion in $\Sm^G_k$. Suppose that $f:U\to X$ is an equivariant Nisnevich cover and set $Z_{U}:=U\times_{X}Z$.
Let $\sU\to X$ and $\sZ\to Z$
denote the associated \v{C}ech resolutions. 
That is, $\sU$  is 
the defined motivic $G$-space defined by    
 $\sU_{n} = U \times_{X} \cdots
\times_{X} U$  
and similarly for $\sZ$. 
This yields a morphism of pairs  
$f: (\sU, \sZ) \to (X,Z)$.
Now let $\sB$ be the motivic $G$-space obtained by setting $\sB_{n} = B(\sU_{n},\sZ_{n})$ and similarly
 ${\rm Th}(N_{{\sZ}/{\sU}})$ denotes the motivic $G$-space which is
 defined to be
the levelwise Thom space: ${\rm Th}(N_{{\sZ}/{\sU}})_{n} = 
{\rm Th}(N_{{\sZ_n}/{\sU_n}})$. 
These motivic $G$-spaces fit into the commutative diagram
\begin{equation*}
\xymatrix@C1pc{
\dfrac{\sU}{\sU \setminus \sZ} \ar[r] \ar[d] &
\dfrac{\sB}{\sB \setminus (\sZ\times \A^1)} \ar[d] &
{\rm Th}(N_{{\sZ}/{\sU}}) \ar[d] \ar[l] \\
\dfrac{X}{X \setminus Z} \ar[r] &
\dfrac{B(X, Z)}{B(X, Z) \setminus (Z \times \A^1)}  &
{\rm Th}(N_{Z/X}). \ar[l]}  \\
\end{equation*}

\begin{lem}\label{lem:Purity-we}
The vertical arrows in the above diagram are equivariant Nisnevich local weak equivalences.
\end{lem}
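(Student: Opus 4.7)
The plan is to reduce all three columns to a single underlying principle: if $V \to W$ is an equivariant Nisnevich cover in $\Sm^G_k$, then the augmentation $V_\bullet \to W$ from the \v{C}ech nerve $V_n = V \times_W \cdots \times_W V$ ($n+1$ factors) to the constant simplicial object $W$ is an equivariant Nisnevich local weak equivalence. I would first establish this principle. Using the conservative family of points on $\Sm^G_{k/{\rm Nis}}$ given by \aref{prop:Point-Cons}, the claim reduces to checking stalks. At any point $x$ the stalk of $V_\bullet$ is the simplicial set $(x^{*}V)^{\bullet+1}$ with face maps given by projections and degeneracies given by diagonals; since the cover property guarantees that $x^{*}V$ is non-empty, choosing any element provides an extra degeneracy and the stalk is contractible.

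For the left column I would verify the scheme-theoretic identity $\sU_n \setminus \sZ_n = (U \setminus Z_U) \times_{X \setminus Z} \cdots \times_{X \setminus Z} (U \setminus Z_U)$, which follows by unwinding fiber products: a point of $\sU_n$ fails to lie in $\sZ_n$ if and only if at least one of its components avoids $Z_U$, and then all components must avoid $Z_U$ since they share an image in $X$. Consequently both $\sU \to X$ and $\sU \setminus \sZ \to X \setminus Z$ are \v{C}ech nerves of equivariant Nisnevich covers, so both are local weak equivalences by the general principle. The inclusion $\sU \setminus \sZ \hookrightarrow \sU$ is a levelwise open immersion, hence a monomorphism, hence a cofibration in the local injective model structure on $\sM^G(k)$. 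Left properness from \aref{thm:LFMS-Psh} then implies that the induced map on the pushouts $\sU/(\sU \setminus \sZ) \to X/(X \setminus Z)$ is a local equivalence.

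For the middle column I would use that blow-ups commute with flat base change, in particular with the \'etale map $f\colon U \to X$, giving $B(\sU_n, \sZ_n) \cong \sU_n \times_X B(X, Z)$. Thus $\sB \to B(X,Z)$ is the \v{C}ech nerve of the pulled-back equivariant Nisnevich cover $U \times_X B(X, Z) \to B(X,Z)$, and an analogous identification applies to the complements once one checks that pullback commutes with the formation of $\sB \setminus (\sZ \times \A^1)$; the left-column argument then applies verbatim. The right column proceeds in the same way via the base-change formula $N_{\sZ_n/\sU_n} \cong \sZ_n \times_Z N_{Z/X}$, which follows from the compatibility of the conormal sheaf with \'etale base change, and then taking pointed quotients.

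The main bookkeeping obstacle I anticipate is verifying, in the middle and right columns, that the complements $\sB \setminus (\sZ \times \A^1)$ and $N_{\sZ/\sU} \setminus \sZ$ themselves arise as \v{C}ech nerves of genuine equivariant Nisnevich covers, i.e., checking that the various complements commute appropriately with the fiber products defining $\sU_n$. Once these identifications are in hand the proof is formal; all non-trivial content has been packaged into the general principle about \v{C}ech nerves of covers and into left properness of the local model structure.
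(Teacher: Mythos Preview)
Your proposal is correct and follows essentially the same approach as the paper: identify $\sB$ and $N_{\sZ/\sU}$ (and their complements) as \v{C}ech nerves of pulled-back equivariant Nisnevich covers via base change, invoke that \v{C}ech nerves of covers are local weak equivalences, and conclude using properness of the local model structure. You are simply more explicit than the paper about why \v{C}ech nerves are locally contractible (via points and an extra degeneracy) and about the identification of the complements as \v{C}ech nerves, and you correctly isolate left properness as the relevant half.
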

\begin{proof}
Note that $\sB = B(X,Z)\times_{X}\sU$ and 
$N_{\sZ/\sU} = N_{Z/X}\times_{X}\sU$. Since $f:U\to X$ is an equivariant Nisnevich cover, the \v{C}ech resolution $\sU\to X$ is a local weak equivalence  and similarly $\sB \to B$ and $N_{\sZ/\sU}\to N_{Z/X}$ are local weak equivalences as well. For the same reason, the maps $\sU\setminus \sZ \to X\setminus Z$, $\sB\setminus (\sZ\times\A^1)\to B(X,Z)\setminus Z\times \A^1$, and $N_{\sZ/\sU}\setminus \sZ \to N_{Z/X}\setminus Z$ are all local equivalences as well. That the vertical arrows are local equivalences follows since the local model structure is proper. 
\end{proof}

\begin{cor}\label{cor:EHP-reduction1}
 Let $f:U\to X$ be an equivariant Nisnevich cover.  \aref{thm:Purity} holds for the pair $(X,Z)$ if and only if it holds for the pair $(U, f^{-1}Z)$. 
\end{cor}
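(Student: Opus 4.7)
The plan is to apply \aref{lem:Purity-we} to the commutative diagram displayed immediately above it, which compares the purity maps for the pair $(X,Z)$ with their simplicial counterparts built from the \v{C}ech nerve of $f\colon U\to X$.

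First I would observe that by \aref{lem:Purity-we} the three vertical arrows in that diagram are local weak equivalences, hence motivic weak equivalences. An application of the two-out-of-three property in the motivic homotopy category then reduces the question: the purity maps $\alpha_{X,Z}$ and $\beta_{X,Z}$ in the bottom row are motivic weak equivalences if and only if the corresponding simplicial top maps
\[
\sU/(\sU\setminus\sZ)\longrightarrow \sB/(\sB\setminus(\sZ\times\A^1))\longleftarrow \Th(N_{\sZ/\sU})
\]
are motivic weak equivalences. By construction these simplicial maps are assembled levelwise from the individual purity maps $\alpha_{U_n,Z_{U_n}}$ and $\beta_{U_n,Z_{U_n}}$ for $U_n=U\times_X\cdots\times_X U$.

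Next I would compare the simplicial maps with their level-zero counterparts $\alpha_{U,Z_U}$ and $\beta_{U,Z_U}$. For the direction asserting that purity for $(U,Z_U)$ implies purity for $(X,Z)$, each face projection $U_n\to U_{n-1}$ is an iterated base change of $f$ and hence itself an equivariant Nisnevich cover, so by naturality of $\alpha$ and $\beta$ together with the level-zero hypothesis one propagates the purity statement upward through every simplicial degree; a standard homotopy-colimit argument (using that the motivic model structure is combinatorial and proper, so levelwise motivic weak equivalences of simplicial motivic $G$-spaces realize to motivic weak equivalences) then yields that $\alpha_\bullet$ and $\beta_\bullet$ are motivic weak equivalences. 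The reverse direction specializes to level zero, again in combination with \aref{lem:Purity-we}.

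The hard part will be organizing the upward propagation along the \v{C}ech resolution without invoking the corollary circularly. The cleanest route is to work after applying a motivic fibrant replacement: the local weak equivalences of \aref{lem:Purity-we} then become schemewise weak equivalences by \aref{thm:A1-flasq}, so that the comparison of the simplicial and non-simplicial rows becomes a formal diagrammatic statement, and purity at level zero together with the closure of motivic weak equivalences under homotopy colimits suffices to conclude.
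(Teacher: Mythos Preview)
Your first two paragraphs match what the paper intends: the corollary is stated without proof, as immediate from \aref{lem:Purity-we} and two-out-of-three. You correctly unpack that the simplicial top-row maps are assembled from the levelwise purity maps $\alpha_{U_n,Z_{U_n}}$, $\beta_{U_n,Z_{U_n}}$, and you correctly flag the real issue: passing between ``purity for $(U,Z_U)$'' and ``the simplicial top-row maps are equivalences'' requires purity at \emph{every} level $(U_n,Z_{U_n})$, which does not follow from level zero alone without circularity. (Your ``reverse direction specializes to level zero'' has the same problem: an equivalence of realizations does not yield a levelwise equivalence.)

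Your proposed fix, however, does not work. \aref{thm:A1-flasq} asserts that motivic fibrant replacement is a schemewise equivalence only for spaces that are already Nisnevich excisive and $\A^1$-invariant; the cofibers $X/(X\setminus Z)$, $B(X,Z)/(\dots)$, $\Th(N_{Z/X})$ enjoy neither property, so the theorem does not apply to them. And ``closure of motivic weak equivalences under homotopy colimits'' only goes from levelwise equivalences to an equivalence of realizations, not from level zero upward. The paper (like Morel--Voevodsky, whose Lemma~3.2.28 explicitly hypothesizes purity at all multi-intersections) leaves this point implicit; what makes the two applications go through is that purity at every level can be verified directly. In \aref{cor:EHP-reduction2} the cover arises from a distinguished equivariant Nisnevich neighborhood, and Nisnevich excision gives directly that all three terms of the purity diagram for $(U,Z)$ are locally equivalent to those for $(E,Z)$, bypassing the \v{C}ech resolution. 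In the proof of \aref{thm:Purity}, each component $U_{i_0}\times_X\cdots\times_X U_{i_n}$ inherits, via projection to $U_{i_0}$, an equivariant \'etale map to the pair of representations furnished by \aref{lem:HVO}; the construction of \aref{prop:Lin-abpt} then linearizes every multi-intersection, so purity holds at all levels and the \v{C}ech argument concludes.
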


\begin{cor}\label{cor:EHP-reduction2}
 Suppose that $(X,Z)$ admits an equivariant Nisnevich linearization. Then $\alpha_{X,Z}$ and $\beta_{X,Z}$ are equivariant motivic weak equivalences.
\end{cor}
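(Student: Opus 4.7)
The plan is to use the linearization to reduce purity for $(X,Z)$ to purity for the vector bundle pair $(E,Z)$, which holds by \aref{lem:Thom-VB}, by applying \aref{cor:EHP-reduction1} twice through the intermediate pair $(U,Z)$. Since the equivariant \'etale map $p\colon U\to X$ supplied by the linearization satisfies $p^{-1}(Z)=Z$, the collection $\{p,\; j\colon X\setminus Z\hookrightarrow X\}$ generates a distinguished equivariant Nisnevich square and is thus an equivariant Nisnevich cover; I combine it into a single cover
\[
\tilde p\colon \tilde U := U\sqcup(X\setminus Z)\to X,\qquad \tilde p^{-1}(Z)\cong Z,
\]
where the preimage of $Z$ sits entirely in the $U$-component. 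Exactly the same construction produces a cover $\tilde q\colon U\sqcup(E\setminus Z)\to E$ with $\tilde q^{-1}(Z)\cong Z$.

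Next I would apply \aref{cor:EHP-reduction1} to $\tilde p$, which gives that $\alpha_{X,Z}$ and $\beta_{X,Z}$ are motivic weak equivalences if and only if $\alpha_{\tilde U,Z}$ and $\beta_{\tilde U,Z}$ are. Because formation of the blow-up $B(-,Z)$, the normal bundle $N_{Z/-}$, and the associated Thom space all commute with disjoint unions, and because the entire preimage $Z$ lives in the $U$-component, the purity diagram for $(\tilde U,Z)$ splits as the wedge of the purity diagram for $(U,Z)$ with the purity diagram for $(X\setminus Z,\emptyset)$; the latter is the diagram $pt\to pt\leftarrow pt$ of basepoints, hence trivially consists of weak equivalences. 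Therefore purity for $(\tilde U,Z)$ is equivalent to purity for $(U,Z)$, and an identical analysis with $\tilde q$ shows that purity for $(E,Z)$ is equivalent to purity for $(U,Z)$.

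Combining these two equivalences with \aref{lem:Thom-VB}, which gives purity for $(E,Z)$, then yields that $\alpha_{U,Z},\beta_{U,Z}$, and hence $\alpha_{X,Z},\beta_{X,Z}$, are motivic weak equivalences. There is no substantial obstacle: once \aref{cor:EHP-reduction1} and \aref{lem:Thom-VB} are available, the argument is a diagram chase, and the only verification required is the straightforward disjoint-union decomposition of the purity diagram, together with the observation that adjoining $X\setminus Z$ to the linearizing \'etale map indeed produces an equivariant Nisnevich cover preserving $Z$.
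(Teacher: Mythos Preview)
Your argument is correct and follows the same route as the paper: reduce purity for $(X,Z)$ through $(U,Z)$ to purity for the vector-bundle pair $(E,Z)$, then invoke \aref{lem:Thom-VB}. The paper's proof is terser and simply cites \aref{cor:EHP-reduction1} for both legs of the zig-zag, but you have spelled out the small point the paper leaves implicit: the linearizing \'etale maps $p$ and $q$ need not be surjective, so one must adjoin the open complements to obtain genuine equivariant Nisnevich covers before \aref{cor:EHP-reduction1} applies literally; your disjoint-union analysis showing that the added component contributes trivially to the purity diagram is exactly the check needed.
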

\begin{proof}
 There are morphisms $(X,Z) \leftarrow (U,Z) \rightarrow (E,Z)$ which are distinguished equivariant Nisnevich neighborhoods.  The morphisms $\alpha_{E,Z}$ and $\beta_{E,Z}$ are equivariant motivic weak equivalences by  \aref{lem:Thom-VB} and so this follows from the previous corollary.
\end{proof}

{\sl Proof of \aref{thm:Purity}:}
There is an equivariant Nisnevich cover $Y\to X$ such that 
$Y$ is a smooth affine $G$-scheme (see \aref{lem:affinenbd}). 
By  \aref{prop:Lin-abpt}, every closed point of $Y$ has an invariant open neighborhood which admits an equivariant Nisnevich linearization. Let $U_1,\ldots, U_{r}$ be finitely many such invariant open neighborhoods which cover $Y$. Now we set $U :=\coprod U_{i}$ and write $f:U\to X$ for the induced map. The pair  $(U, f^{-1}Z)$ admits an equivariant Nisnevich linearization and so by 
\aref{cor:EHP-reduction2},   \aref{thm:Purity} is true for $(U,f^{-1}Z)$. Therefore by  \aref{cor:EHP-reduction1} it is also true for $(X,Z)$. 

$\hfill \square$

\vskip .3cm

Using the same line of proof as for \aref{thm:Purity} verbatim, 
we obtain the following result for equivariant blow-ups.

\begin{thm}
\label{thm:Blow-up-square}
Let $k$ and $G$ be as in \autoref{thm:Purity}. 
Let $Z \inj X$ be a closed immersion in $\Sm^G_k$ with complement $U = X \setminus Z$ and
let $p: X' \to X$ denote the blow-up of $X$ along $Z$. 
Then the square
\begin{equation*}
\xymatrix@C1pc{
p^{-1}(Z) \ar[r] \ar[d] & {X'}/U \ar[d] \\
Z \ar[r] & X/U}
\end{equation*}
is a homotopy pushout square of motivic $G$-spaces.
\end{thm}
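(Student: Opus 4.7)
The plan is to mimic the three-step strategy of \aref{thm:Purity} verbatim: first treat the special case of a vector bundle with its zero section, next show that taking \v{C}ech resolutions of equivariant Nisnevich covers reduces the general statement to any cover $(Y, f^{-1}Z)$, and finally cover $X$ by invariant opens admitting equivariant Nisnevich linearizations. For the vector bundle base case, let $E \to Z$ be a $G$-equivariant vector bundle with zero section $Z \inj E$ and let $U = E \setminus Z$. Then $B(E,Z)$ is the total space of $\sO_{\P(E)}(-1)$ over the exceptional divisor $p^{-1}(Z) = \P(E)$, and the blow-up map is an isomorphism over $U$. Using \aref{lem:Thom-VB} to identify $E/U \simeq \Th(E)$ and $B(E,Z)/U \simeq \Th(\sO(-1))$, I would verify directly that the induced square of Thom spaces is a homotopy pushout, exploiting \aref{prop:Hom-inv} to contract the linear fibers of $\sO(-1) \to \P(E)$ and $E \to Z$ to their respective base schemes.

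Next I would run the \v{C}ech reduction parallel to \aref{lem:Purity-we}. For an equivariant Nisnevich cover $f : Y \to X$, form the \v{C}ech resolution $\sY_\bullet \to X$ with $\sZ_\bullet = \sY_\bullet \times_X Z$, together with the levelwise blow-ups $\sY_\bullet' = B(\sY_\bullet, \sZ_\bullet)$ and their exceptional divisors. Because equivariant Nisnevich covers are stable under pullback and \v{C}ech resolutions of such covers are local weak equivalences, each of the four corners of the simplicial blow-up square maps by a local weak equivalence to the corresponding corner of the blow-up square of $(X,Z)$. Right-properness of the motivic model structure then implies that the square for $(X, Z)$ is a homotopy pushout if and only if the square for $(Y, f^{-1}Z)$ is.

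To conclude, I would apply \aref{lem:affinenbd} to assume $X$ is smooth affine, and then use \aref{prop:Lin-abpt} to cover $X$ by finitely many invariant opens $U_i$ for which the pair $(U_i, U_i \cap Z)$ admits an equivariant Nisnevich linearization $(U_i, U_i \cap Z) \leftarrow (V_i, W_i) \rightarrow (E_i, W_i)$. Setting $U = \coprod U_i \to X$ gives an equivariant Nisnevich cover on which $(U, f^{-1}Z)$ admits a linearization; combining the previous two reductions with the vector bundle case closes the argument. The main obstacle is the vector bundle base case, since the remaining two steps are essentially formal and transfer unchanged from the purity proof; the novelty lies in verifying, by direct computation with the $\sO_{\P(E)}(-1)$-model of the blow-up, that the analogue square for a vector bundle is already a homotopy pushout in the motivic model structure.
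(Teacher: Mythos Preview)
Your proposal is correct and follows exactly the approach the paper indicates: the paper's entire proof reads ``Using the same line of proof as for \aref{thm:Purity} verbatim,'' and your three-step outline (vector-bundle base case via the $\sO_{\P(E)}(-1)$ model of the blow-up, \v{C}ech reduction as in \aref{lem:Purity-we}, and the covering by linearizable opens via \aref{lem:affinenbd} and \aref{prop:Lin-abpt}) is precisely that. One small caution: in this paper $B(X,Z)$ denotes the blow-up of $X\times\A^1$ along $Z\times\{0\}$, not the blow-up of $X$ along $Z$, so you should write $X'$ or $\mathrm{Bl}_Z(E)$ for the latter to avoid confusion.
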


\begin{cor}
\label{cor:Blow-up-les}
With the same assumptions as in \aref{thm:Purity} and \aref{thm:Blow-up-square}, there are naturally induced homotopy equivalences of
equivariant $K$-theory spectra $\sK^G(X/U)\cong\sK^G(\Th(N_{Z/X}))$ and $\sK^G(({X'}/U) {\underset{p^{-1}(Z)}\coprod} Z)\cong\sK^G(X/U)$.
Moreover, 
there are long exact sequences 
\begin{equation*}
\cdots 
\rightarrow
K^G_{n+1}(U)
\rightarrow
K^G_{n}(\Th(N_{Z/X}))
\rightarrow
K^G_{n}(X)
\rightarrow
K^G_{n}(U)
\rightarrow
\cdots, 
\end{equation*}
and
\begin{equation*}
\cdots 
\rightarrow
K^G_{n}(X)
\rightarrow
K^G_{n}(Z)\oplus K^G_{n}({X'})
\rightarrow
K^G_{n}(p^{-1}(Z))
\rightarrow
K^G_{n-1}(X)
\rightarrow
\cdots.
\end{equation*}
\end{cor}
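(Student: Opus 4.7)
The plan is to derive everything from the equivariant purity theorem (\aref{thm:Purity}), the equivariant blow-up theorem (\aref{thm:Blow-up-square}), and the representability of equivariant $K$-theory (\aref{cor:M-Uns-Rep1}). The crux is that $\sK^G$, being equivariant Nisnevich excisive and $\A^1$-invariant (\aref{thm:NIS-DESC1}), descends to a contravariant functor on $\Ho^G_{\bullet}(k)$ that sends cofiber sequences of pointed motivic $G$-spaces to fiber sequences of $K$-theory spectra. The role of \aref{thm:Purity} and \aref{thm:Blow-up-square} is then to produce the required equivalences and homotopy pushout squares in $\Ho^G_{\bullet}(k)$.

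For the first homotopy equivalence I would appeal to \aref{thm:Purity} for the isomorphism $X/(X\setminus Z) \simeq \Th(N_{Z/X})$ in $\Ho^G_{\bullet}(k)$ and apply $\sK^G$. For the second I would interpret \aref{thm:Blow-up-square} as the statement that $X/U$ is canonically the homotopy pushout of $X'/U$ and $Z$ along $p^{-1}(Z)$ in $\sM^G_{\bullet}(k)$, whence applying $\sK^G$ yields the stated equivalence.

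The first long exact sequence is obtained by applying $\sK^G$ to the cofiber sequence $U_+ \to X_+ \to X/U$ of pointed motivic $G$-spaces to produce a fiber sequence of $K$-theory spectra; its long exact sequence of homotopy groups reads
\[
\cdots \to K^G_{n+1}(U) \to K^G_n(X/U) \to K^G_n(X) \to K^G_n(U) \to \cdots,
\]
and the first equivalence then substitutes $K^G_n(X/U) \cong K^G_n(\Th(N_{Z/X}))$.

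For the second long exact sequence, the nontrivial step is to promote the pointed homotopy pushout of \aref{thm:Blow-up-square} to an unpointed homotopy pushout square
\[
\xymatrix@R1pc{
p^{-1}(Z) \ar[r] \ar[d] & X' \ar[d]^{p} \\
Z \ar[r] & X.
}
\]
I expect this to be the main technical step: one natural approach is to form the candidate homotopy pushout $P := Z \coprod_{p^{-1}(Z)} X'$ and compare, via the five-lemma, the two cofiber sequences $U_+ \to (-)_+ \to (-)/U$ arising from $P$ and $X$ respectively, noting that quotienting each corner by $U$ recovers precisely the pointed square of \aref{thm:Blow-up-square}, whence $P/U_+ \simeq X/U$ and consequently $P \simeq X$. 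Applying $\sK^G$ then converts this unpointed homotopy pushout into a homotopy pullback of spectra, yielding the stated Mayer--Vietoris long exact sequence.
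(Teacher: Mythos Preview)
The paper gives no explicit proof of this corollary; it is meant to follow immediately from \aref{thm:Purity}, \aref{thm:Blow-up-square}, and the representability result \aref{cor:M-Uns-Rep1}. Your proposal correctly supplies exactly these details: applying the representable functor $\sK^G$ to the purity equivalence and to the homotopy pushout of \aref{thm:Blow-up-square}, and extracting the long exact sequences from the resulting (co)fiber sequences, is precisely the intended argument.
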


\section*{Acknowledgments.}
The first version of this paper was written when AK was visiting the Department of Mathematics at University of Oslo in the summer of 2011.
He thanks the department for the invitation and support. 
The paper was finished during the Special Semester in Motivic Homotopy Theory at Universit{\"at} Duisburg-Essen, 
and we are grateful for its hospitality and support. 
We thank Aravind Asok, David Gepner, Marc Hoyois, Marc Levine, and Ben Williams \cite{Williams} for useful discussions on subjects related to this paper.
Finally, we thank the referee for a careful reading of this paper.

\bibliographystyle{plain}
\bibliography{unstable}

\end{document}